\newtheorem{lemma}{\bf Lemma}[section]
\newtheorem{defi}[lemma]{\bf Definition}
\newtheorem{prop}[lemma]{\bf Proposition}
\newtheorem{thm}[lemma]{\bf Theorem}
\newcommand{\GL}{{\operatorname{GL}}}
\newcommand{\SL}{{\operatorname{SL}}}
\newcommand{\PGammaL}{{\operatorname{P\Gamma L}}}
\newcommand{\PSL}{{\operatorname{PSL}}}
\DeclareMathOperator{\Sym}{Sym}
\DeclareMathOperator{\Aut}{Aut}
\DeclareMathOperator{\Out}{Out}
\DeclareMathOperator{\soc}{soc}
\DeclareMathOperator{\Stab}{Stab}
\DeclareMathOperator{\STS}{STS}
\DeclareMathOperator{\KTS}{KTS}
\title[]{On pyramidal groups of prime power degree}
\author{Xiaofang Gao} 
\address{Xiaofang Gao. Departamento de Matem\'atica, Universidade de Bras\'ilia, Campus 
Universit\'ario \\ Darcy Ribeiro, Bras\'ilia-DF, 70910-900, Brazil. \newline
ORCID:  https://orcid.org/0000-0001-8106-7941}
\email{gaoxiaofang2020@hotmail.com}
\author{Martino Garonzi}
\address{Martino Garonzi. Departamento de Matem\'atica, Universidade de Bras\'ilia, Campus 
Universit\'ario Darcy Ribeiro, Bras\'ilia-DF, 70910-900, Brazil. \newline
ORCID: https://orcid.org/0000-0003-0041-3131}
\email{martino@mat.unb.br, mgaronzi@gmail.com}
\thanks{The first author acknowledges the support of the CAPES PhD
fellowship and the NSF of China - Grant number 12161035. 
The second author acknowledges the support of Conselho Nacional 
de Desenvolvimento Cient\'ifico e Tecnol\'ogico (CNPq), Universal
- Grant number 402934/2021-0.}
\date{}
\keywords{Primitive group, Finite group, Solvable group, Kirkman Triple System}
\begin{document}

\begin{abstract}
A Kirkman Triple System $\Gamma$ is called $m$-pyramidal if there exists a subgroup $G$ of the automorphism group of $\Gamma$ that fixes $m$ points and acts regularly on the other points. Such group $G$ admits a unique conjugacy class $C$ of involutions (elements of order $2$) and $|C|=m$. We call groups with this property $m$-pyramidal. We prove that, if $m$ is an odd prime power $p^k$, with $p \neq 7$, then every $m$-pyramidal group is solvable if and only if either $m=9$ or $k$ is odd. The primitive permutation groups play an important role in the proof. We also determine the orders of the $m$-pyramidal groups when $m$ is a prime number.
\end{abstract}

\maketitle

\setlength{\parskip}{2mm}

\section{Introduction}

A Steiner triple system of order $v$, briefly $\STS(v)$, is a pair $(V,\mathfrak{B})$ where $V$ is
a set of $v$ points and $\mathfrak{B}$ is a set of $3$-subsets (blocks) of $V$ with the property that any two
distinct points are contained in exactly one block. A Kirkman triple system of order $v$, briefly $\KTS(v)$, is an $\STS(v)$ together with a resolution $R$ of its block-set $\mathfrak{B}$, that is a partition of $\mathfrak{B}$
into classes (parallel classes) each of which is, in its turn, a partition of the point-set $V$. It has been known since the mid-nineteenth century that a $\STS(v)$ exists if and only if $v \equiv 1$ or $3 \mod 6$ \cite{Kirkman}. The analogous result for KTSs has been instead obtained more than a century later \cite{RW}: a $\KTS(v)$ exists if and only if $v \equiv 3 \mod 6$. These structures fall into the broader category of combinatorial designs. A combinatorial design $D$ is said to be $m$-pyramidal if there exists a subgroup of $\Aut(D)$ which fixes $m$ points and acts sharply transitively on all the other points. The existence problem for $3$-pyramidal STSs was completely settled in \cite{BRT}: there
exists a 3-pyramidal $\STS(v)$ if and only if $v \equiv 7, 9, 15 \mod 24$ or $v \equiv 3, 19 \mod 48$. Bonvicini, Buratti,  Garonzi,  Rinaldi and Traetta \cite{SMG} gave a necessary condition for the existence of a $3$-pyramidal $\KTS(v)$ and provided some difference methods to construct $3$-pyramidal Kirkman triple systems.

An automorphism of a Kirkman triple system is a permutation of the point set that sends blocks to blocks and parallel classes to parallel classes. Let $m \geqslant 1$ be an integer. A Kirkman triple system is called $m$-pyramidal if there is a subgroup $G$ of the automorphism group of the Kirkman triple system which fixes $m$ points and acts regularly on the other points. In this case, we say that the $\KTS$ is realized ``under'' $G$. Call $\infty_i$ the fixed points, for $i=1,\ldots,m$. Clearly, we may identify the vertices of the $\KTS$ with $V = G \cup \{\infty_1, \ldots, \infty_m\}$. $G$ acts on $V$ by right multiplication on the elements of $G$ and by fixing the points $\infty_i$.

\begin{prop}
Assume an $m$-pyramidal $\KTS$ can be realized under a nontrivial group $G$. Then $G$ has precisely $m$ involutions and the involutions of $G$ are pairwise conjugate.
\end{prop}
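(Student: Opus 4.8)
The plan is to work entirely in the coordinatization $V = G \cup \{\infty_1,\ldots,\infty_m\}$ and first pin down how the blocks interact with the fixed points. First I would observe that for $i \neq j$ the unique block through the pair $\{\infty_i,\infty_j\}$ is fixed setwise by $G$ (uniqueness plus the fact that $G$ fixes $\infty_i,\infty_j$), so its third point is $G$-fixed as well and must therefore be another $\infty_k$; thus any block meeting two fixed points consists entirely of fixed points. Consequently the block through $\infty_i$ and the identity $1$ has its third point inside $G$, say $\{\infty_i,1,t_i\}$ with $t_i \neq 1$. Applying the automorphism $t_i$ (right multiplication) sends this to $\{\infty_i,t_i,t_i^2\}$, which is again the block through the pair $\{\infty_i,t_i\}$; comparing with $\{\infty_i,1,t_i\}$ via uniqueness forces $t_i^2 = 1$, so $t_i$ is an involution. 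This produces a map $\infty_i \mapsto t_i$.

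Next I would show this map is a bijection onto the involutions of $G$, which delivers the count ``precisely $m$ involutions''. Injectivity is immediate: if $t_i = t_j$ with $i\neq j$ then $\{1,t_i\}$ lies in the two distinct blocks $\{\infty_i,1,t_i\}$ and $\{\infty_j,1,t_i\}$, contradicting the Steiner property. For surjectivity, given any involution $t$, write the block through $\{1,t\}$ as $\{1,t,z\}$; applying $t$ gives a block still containing $\{1,t\}$, hence equal to the original, which forces $zt=z$ and therefore $z \notin G$ (else $t=1$). Thus $z=\infty_i$ and $t=t_i$. Note also that each $\infty_i$ induces the fixed-point-free matching $g \leftrightarrow t_i g$ on $G$, so every mixed block has the shape $\{\infty_i,\,y,\,t_i y\}$.

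For the conjugacy statement I would bring in the resolution, which has so far played no role. The key structural facts are: (i) a parallel class containing an all-infinity block $\{\infty_i,\infty_j,\infty_k\}$ is $G$-invariant, since $G$ fixes that block and each block lies in a unique class; and (ii) a $G$-invariant parallel class contains no mixed block, because the $G$-orbit of a mixed block $\{\infty_i,y,t_iy\}$ has size $|G|/2 > 1$ (for $|G|>2$) and would cover $\infty_i$ in more than one block of the same class. The crux is then to examine the parallel class $P_1$ containing the mixed block $\{\infty_i,1,t_i\}$. Since $t_i$ fixes this block, it fixes $P_1$; but $P_1$ contains a mixed block, so by (i)--(ii) it is \emph{not} $G$-invariant and hence contains \emph{no} all-infinity block. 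Therefore every $\infty_j$ lies in a mixed block $\{\infty_j,\,y,\,t_j y\}$ of $P_1$, and since $t_i$ fixes $P_1$ and fixes $\infty_j$ it must fix this block; comparing $\{\infty_j,\,y t_i,\,t_j y t_i\}$ with $\{\infty_j,\,y,\,t_j y\}$ yields $y t_i = t_j y$, i.e.\ $t_j = y\, t_i\, y^{-1}$. Thus every $t_j$ is conjugate to $t_i$, and the involutions are pairwise conjugate.

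I expect the main obstacle to be precisely the interplay recorded in facts (i) and (ii): the argument only works because the particular class $P_1$ is simultaneously forced to be $t_i$-invariant (giving the conjugating equation from each mixed block) yet barred from being $G$-invariant (guaranteeing that \emph{all} fixed points, not merely some, sit in mixed blocks of $P_1$). Isolating this dichotomy, and checking the small cases $|G|\leqslant 2$ separately where the orbit-size argument in (ii) degenerates, is the delicate part; the Steiner-level bookkeeping in the first two paragraphs is routine by comparison.
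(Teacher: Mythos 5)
Your proof is correct. The counting half (the bijection $\infty_i \mapsto t_i$ between fixed points and involutions) is essentially identical to the paper's argument, and the conjugacy half shares the paper's skeleton: pass to the parallel class $P_1$ containing $\{1,t_i,\infty_i\}$, observe that it is $t_i$-invariant, and extract $t_j = y\,t_i\,y^{-1}$ from the $t_i$-invariance of the block of $P_1$ through $\infty_j$. Where you genuinely diverge is in ruling out the possibility that this block is an all-infinity block. The paper does this inside its case analysis: if the block of the class through some fixed point were all-infinity, the class would be $G$-invariant, and then applying the involution $x_i$ to the block of the class containing $x_i$ produces the block through $1$, forcing $\infty_1$ to lie in two distinct blocks of the same partition --- a contradiction tailored to that configuration. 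You instead prove the modular fact (your~(ii)) that a $G$-invariant parallel class contains no mixed block at all, via an orbit--stabilizer count: the stabilizer of $\{\infty_i,y,t_iy\}$ under right multiplication is $\{1,y^{-1}t_iy\}$, so its orbit consists of $|G|/2$ distinct blocks all containing $\infty_i$. Combined with your~(i), which the paper also proves inline, this yields the needed dichotomy in one stroke and makes the final conjugation step almost automatic. Your route is cleaner and isolates a reusable structural statement; its only cost is the degenerate case $|G|\leqslant 2$, where the orbit count collapses --- you correctly flag this, and it is harmless because $|G|=2$ forces $m=1$, where conjugacy is vacuous (the paper's inline contradiction never meets this issue, since its bad case already requires $m\geqslant 3$).
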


\begin{proof}
First, we prove that $G$ has precisely $m$ involutions. Note that for each $i \in \{1,\ldots,m\}$ there exists a block $B_i$ passing through $1$ and $\infty_i$, call it $B_i = \{1,x_i,\infty_i\}$. We claim that $x_i \in G$. If it were not the case, then $x_i$ would be a fixed point $\infty_j$ with $j \neq i$, that is $B_i=\{1, \infty_j, \infty _i\}$.  Thus $B_i g = \{g,\infty_j,\infty_i\}$ is a block for any $g\in G$. Since this holds for all $g \in G$ and there is a unique block passing through $\infty_j$ and $\infty_i$, this implies that $G=\{1\}$, a contradiction. Now, $B_ix_i^{-1} = \{x_i^{-1},1,\infty_i\}$ is a block by assumption, hence it is equal to $B_i$ by uniqueness of the block passing through $1$ and $\infty_i$. This implies that $x_i=x_i^{-1}$ hence $x_i$ has order $2$. This gives us $m$ elements of order $2$, namely $x_1,\ldots,x_m$. Now assume $x \in G$ is any element of order $2$ and let $B=\{1,x,y\}$ be the block through $1$ and $x$. Then $Bx=\{x,1,yx\}$ and again, by uniqueness of the block through $1$ and $x$, we have that $yx=y$. Since $x \neq 1$, we deduce that $y$ is a fixed point $\infty_i$, therefore $x=x_i$. This proves that the involutions of $G$ are precisely $x_1,\ldots,x_m$.

Now we prove that $x_1,\ldots,x_m$ are pairwise conjugate in $G$. Let $\mathcal{Q}$ be the parallel class containing the block $B=\{1,x_1,\infty_1\}$. Note that $Bx_1=B$, therefore $B \in \mathcal{Q} \cap \mathcal{Q} x_1$, hence $\mathcal{Q} x_1 = \mathcal{Q}$. Now, let $B_i = \{\infty_i,g_i,h_i\}$ be the block of $\mathcal{Q}$ through $\infty_i$, for a fixed $i \in \{1,\ldots,m\}$. Since $B_i x_1 \in \mathcal{Q} x_1 = \mathcal{Q}$ and the unique block of $\mathcal{Q}$ through $\infty_i$ is $B_i$, we must have $B_i x_1 = B_i$, that is 
$$\{\infty_i, g_ix_1, h_ix_1\}=\{\infty_i, g_i, h_i\}.$$
If $g_ix_1=g_i$ and $h_ix_1=h_i$, then there exist two integers $j, k\in \{1,\ldots, m\}$ with $j\neq i\neq k$ such that $g_i=\infty_j$ and $h_i=\infty_k$, thus $B_i=\{\infty_i, \infty_j, \infty_k\}$. Note that the block $B_i$ is fixed by all $g\in G$, so it belongs to all the parallel classes $\mathcal{Q}g$, $g\in G$, therefore $\mathcal{Q}g\cap \mathcal{Q}\neq \varnothing$ for any $g\in G$, thus $\mathcal{Q}g=\mathcal{Q}$ for any $g\in G$. Let $i\in \{2,\ldots, m\}$ and let $Y=\{x_i, a_i, b_i\}$ be the block in $\mathcal{Q}$ containing $x_i$. Obviously, $Yx_i=\{1, a_ix_i, b_ix_i\}\in \mathcal{Q}x_i=\mathcal{Q}$. This implies that $$\{1,x_1,\infty_1\}=\{1,a_ix_i,b_ix_i\}$$ because $\mathcal{Q}$ is a partition of the point set. But if $a_ix_i=\infty_1$ then $a_i=\infty_1$ is a contradiction, and if $b_ix_i=\infty_1$ then $b_i=\infty_1$ is a contradiction ($\mathcal{Q}$ is a partition of the point set). We deduce that
$g_ix_1=h_i$ and $h_ix_1=g_i$. Note that 
$$B_i g_i^{-1} = \{\infty_i,1,h_ig_i^{-1}\} = \{\infty_i,1,g_ix_1g_i^{-1}\},$$ and this block must equal $\{1,x_i,\infty_i\}$ by uniqueness of the block through $1$ and $\infty_i$. This proves that $g_i x_1 g_i^{-1}=x_i$.
\end{proof}

This motivates the following definition, which is the same we used in \cite{GG}. In this paper, all groups are assumed to be finite.

\begin{defi}[Pyramidal groups]
A group $G$ is called $m$-pyramidal if $G$ has precisely $m$ involutions, which are all conjugate to each other. $G$ is called pyramidal if it is $m$-pyramidal for some $m$.
\end{defi}

For example, if $m$ is any odd integer larger than $1$, the dihedral group $D_{2m}$ with $2m$ elements is $m$-pyramidal. If $q$ is any odd prime power, then $\mathbb{F}^2_{q}\rtimes \SL(2, q)$ is a $q^2$-pyramidal group, nonsolvable for $q>3$ (see Section \ref{primepower}).

The following general question is interesting: if $m$ is an odd positive integer, what are the values of $v$ such that there exists an $m$-pyramidal $\KTS$ on $v$ vertices? Of course, if a $\KTS$ is realized under a group $G$, then $G$ is $m$-pyramidal, however it is important to observe that the converse is not true. For example, if $n$ is any positive integer, then $G=S_3 \times C_{4n+3}$ is a $3$-pyramidal group not associated to any $3$-pyramidal $\STS$ (see \cite[Theorem 3.4 (iii)]{BRT}: such a $\STS$ should have $|G|+3=24n+21$ vertices), and so in particular it is not associated to any $3$-pyramidal $\KTS$. Despite this, it is still very interesting to study the orders of the $m$-pyramidal groups and also the values of the integers $m$ such that there exist $m$-pyramidal groups with some prescribed property. In \cite{GG}, we proved that every $3$-pyramidal group is solvable and we gave a classification of $3$-pyramidal groups. In this paper, we discuss the values of prime powers $m$ for which the $m$-pyramidal groups are all solvable and we determine the orders of the $m$-pyramidal groups when $m$ is a prime number.

Note that if $G$ is a finite group with $m$ involutions, and $m \geqslant 1$, then $m$ is odd. Indeed the size of $\{x \in G\ :\ x \neq x^{-1}\}$ is even, therefore $|G|$ and $|G|-m-1$ are even, so $m$ is odd. Moreover, if $G$ is a $1$-pyramidal group, then a Sylow $2$-subgroup $P$ of $G$ is a cyclic or generalized quaternion group since $P$ has a unique subgroup of order $2$ (see \cite[Theorem 6.11]{MI}) and, if $G$ is nonsolvable, then $P$ is noncyclic (see \cite [Corollary 5.14]{MI}).

In this paper we prove the following result.

\begin{thm}\label{maintheorem}
Let $m$ be a prime power $p^k$ with $p$ an odd prime, $p \neq 7$. Then the following are equivalent.
\begin{enumerate}
    \item Every $m$-pyramidal group is solvable.
    \item $k$ is odd or $m=9$.
\end{enumerate}
\end{thm}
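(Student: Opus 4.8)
The plan is to establish the two implications separately; the implication $(1)\Rightarrow(2)$ is the easier one and I would prove it contrapositively. Assume $k$ is even and $m\neq 9$, and set $q=p^{k/2}$. Since $q$ is an odd prime power and $m=q^2\neq 9$ forces $q\neq 3$, we have $q\geq 5$, so $\SL(2,q)$ is nonsolvable. The group $G=\mathbb{F}_q^2\rtimes \SL(2,q)$ described in the introduction is then nonsolvable, and the computation sketched there (the unique involution of $\SL(2,q)$ is $-I$, the involutions of $G$ are exactly the $q^2$ elements $(v,-I)$ with $v\in\mathbb{F}_q^2$, and these form a single conjugacy class) shows that it is $q^2=p^k$-pyramidal. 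Hence not every $m$-pyramidal group is solvable.

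The real content is $(2)\Rightarrow(1)$, which I would also prove in contrapositive form: \emph{if $G$ is a nonsolvable $m$-pyramidal group with $m=p^k$ and $p\neq 7$, then $k$ is even and $m\neq 9$.} I would take $G$ to be a counterexample of least order and exploit the single conjugacy class $C$ of involutions, of size $p^k$. Fixing $z\in C$, the centralizer $C_G(z)$ has odd index $p^k$, hence contains a full Sylow $2$-subgroup of $G$, so the whole $2$-local structure is governed by an involution centralizer. Using minimality together with the fact that the involutions generate a normal subgroup and are all conjugate, I would reduce to the case that $G$ has a unique minimal normal subgroup $N$ and that a suitable action has prime-power degree $p^k$: either $G$ acts primitively on $C$ with point stabilizer $C_G(z)$, or $N$ is elementary abelian of order $p^k$ and $G\leq\mathbb{F}_p^k\rtimes\GL(k,p)$ is an affine primitive group. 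At this point the O'Nan--Scott theorem and the classification of finite simple groups split the analysis into the affine case and the almost simple (nonabelian socle) case.

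In the affine case $G=V\rtimes H$ with $V\cong C_p^k$ and $H\leq\GL(k,p)$, I would count involutions directly: an involution of $G$ is a pair $(v,h)$ with $h^2=1$ and $v$ in the $(-1)$-eigenspace of $h$ on $V$, so requiring these to form a single class of size exactly $p^k=|V|$ forces $H$ to possess a unique involution $z$, acting on $V$ as $-1$. Then a Sylow $2$-subgroup of $H$ has a unique involution and is therefore cyclic or generalized quaternion; nonsolvability excludes the cyclic case, and by the Brauer--Suzuki theorem $H$ is, modulo its odd core, essentially of $\SL(2,q)$-type. The crucial point is representation-theoretic: for such $H$ a faithful $\mathbb{F}_pH$-module on which the central involution acts by $-1$ is quaternionic, hence of even $\mathbb{F}_p$-dimension, so $k$ is even. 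The value $m=9$ drops out as genuinely exceptional because it corresponds to $q=p^{k/2}=3$, where the natural complement $\SL(2,3)$ --- indeed all of $\GL(2,3)$ --- is solvable, so no nonsolvable affine example exists; a short direct check of the transitive groups of degree $9$ disposes of the nonabelian-socle possibility for $m=9$ as well.

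The remaining case --- $G$ almost simple, or with a nonabelian socle --- is where I expect the main difficulty, and where the hypothesis $p\neq 7$ enters. Here I would invoke the classification of the finite simple groups admitting a subgroup of prime-power index (Guralnick's theorem) to obtain a short list of candidate socles, match the prescribed subgroup of index $p^k$ against an involution centralizer, and compute the exact number of involutions in each family, checking that it is never a single class of size $p^k$ with $k$ odd. The obstruction at $p=7$ is the numerical coincidence $7=(2^3-1)/(2-1)$: the group $\PSL(3,2)\cong\PSL(2,7)$ acts primitively of degree $7$ on the Fano plane, so the primitive groups of $7$-power degree carry an extra family that the uniform parity argument does not kill, and isolating this configuration is precisely why $p=7$ must be excluded. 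The hardest steps are thus the reduction to a primitive action of degree $p^k$ and the uniform, CFSG-based verification of involution counts across Guralnick's list; the affine analysis, by contrast, is essentially the representation theory of $\SL(2,q)$-type groups and is comparatively routine.
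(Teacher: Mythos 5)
Your proof of $(1)\Rightarrow(2)$ is correct and is exactly the paper's argument (Proposition \ref{Npyr} applied to $\mathbb{F}_q^2\rtimes\SL(2,q)$, $q=p^{k/2}$). The problems are in $(2)\Rightarrow(1)$. The central reduction you propose --- that a minimal counterexample either acts \emph{primitively} on the class $C$ of involutions with point stabilizer $C_G(z)$, or is an affine primitive group with a normal subgroup $C_p^k$ --- is unjustified, and the configuration it omits is precisely the hard one. The action of $G$ on $C$ has kernel $C_G(K)$, where $K$ is the subgroup generated by the involutions, and nothing forces the nonsolvability of $G$ to be visible in the image of that action: in the paper's most difficult case one has $K\cong C_2^n$ with $2^n-1=m$, the image $G/C_G(K)$ is \emph{cyclic} of order $m$ (so solvable, and the action on $C$ is regular, not "almost simple or affine with nonsolvable complement"), and all the potential nonsolvability is buried in the kernel $C_G(K)$. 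Moreover $C_G(z)$ need not be maximal, and a minimal normal subgroup of a pyramidal group of even order is $C_2^n$ or $S^n$, never of order $p^k$ with $p$ odd, so neither horn of your dichotomy applies. Eliminating the nonsolvable-kernel configuration is the bulk of the paper's Section \ref{section_mprime} (Steps 1--5: $\Phi(G)$ is a $2$-group containing $K$, $W=G/\Phi(G)$ is primitive with socle $S^t$, then Dan Levy's classification of minimal almost-simple groups, the Li--Zhang tables of primitive groups with solvable stabilizers, and a wreath-product/diagonal-subgroup argument); your outline contains no step that could replace this.

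The second gap is that, even where your framework does meet the Mersenne case $K\cong C_2^n$, $2^n-1=p^k$, the verification you propose (run through Guralnick's list and count involutions in each candidate simple group) does not engage the actual difficulty. All involutions of $G$ lie in $K$, i.e.\ on the \emph{kernel} side of the extension $1\to C_G(K)\to G\to G/C_G(K)\to 1$; the question is whether an extension with $G/C_G(K)\cong\GL(n,2)$, $n\geqslant 5$, and a single $G$-class of $2^n-1$ involutions inside $K$ can exist, and no count of involutions inside $\PSL(n,2)$ bears on that. The paper rules this out by a constructive argument (Lemma \ref{el2gp}, following Dempwolff: commuting elements of orders $3$ and $7$ built from $2\times 2$ and $3\times 3$ blocks, leading to a contradiction), and this is exactly where $p\neq 7$ enters, since for $n=3$ such a block decomposition does not exist --- your Fano-plane remark identifies the right coincidence, but your method offers no tool for $n\geqslant 5$. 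By contrast, your affine endgame is essentially sound: a nonsolvable group with a unique involution acting as $-1$ on $\mathbb{F}_p^k$ forces $k$ even (the paper gets this more cheaply than your quaternionic-module argument, via $\overline{\varepsilon}\in\SL(k,p)$ and $\det(-I_k)=(-1)^k$); but reaching that configuration in a minimal counterexample requires the Brauer--Suzuki and Gorenstein--Walter analysis, Guralnick's theorem to show the involution centralizer $H$ is nonsolvable, and the minimality argument giving $G=R\rtimes H$, none of which your sketch supplies.
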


The reason why we need $p \neq 7$ is that, in the proof of Lemma \ref{el2gp}, we need to be able to construct matrices containing at least one $2 \times 2$ block and one $3 \times 3$ block. We believe that our theorem is true also for $p=7$, however we do not have a proof at the moment.

It is natural to ask what happens if $m$ is not a prime power. Is it possible to determine all the values of the odd integer $m$ such that every $m$-pyramidal group is solvable? See the proof of \cite [Lemma 1]{HY} for a classification of the pyramidal nonabelian simple groups.

If an $m$-pyramidal $\KTS$ is realized under a group $G$, then of course the number of vertices is $|G|+m$. For this reason, it is interesting to determine the orders of the $m$-pyramidal groups. Let $m$ be an odd positive integer and let $X_m$ be the set of orders of $m$-pyramidal groups. The set $X_3$ was determined in \cite[Theorem 3.9]{SMG}, it is the set of positive integers of the form $6d$ with $d$ odd or $3 \cdot 2^a \cdot d$ with $a$ even and $d$ odd. In the following result we generalize this result and we concentrate on the case in which $m$ is a prime number. It is interesting to observe that Mersenne primes (i.e. primes of the form $2^n-1$) play a prominent role.

\begin{thm} \label{orders}
Let $m$ be an odd prime number distinct from $7$. If $m$ has the form $2^n-1$ for some integer $n$, set $Y_m = \{2^a \cdot m \cdot d\ :\ n|a,\ d\ \mbox{odd}\}$, otherwise $Y_m = \varnothing$. Write $m-1=2^t \cdot r$ with $r$ odd and let $Z_m = \{2^a \cdot m \cdot d\ :\ 1 \leqslant a \leqslant t,\ d\ \mbox{odd}\}$. Then $X_m = Y_m \cup Z_m$.
\end{thm}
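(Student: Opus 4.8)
The plan is to combine the structure of $m$-pyramidal groups with Burnside's theorem on transitive groups of prime degree, using solvability as the engine. Since $m$ is an odd prime distinct from $7$, Theorem \ref{maintheorem} (with $k=1$) tells us that every $m$-pyramidal group $G$ is solvable, and I would exploit this throughout. Let $\mathcal I$ be the set of $m$ involutions of $G$; by hypothesis they form a single conjugacy class, so $G$ acts transitively on $\mathcal I$ by conjugation. Let $K$ be the kernel of this action, so $K=C_G(\langle\mathcal I\rangle)$ and $\overline G:=G/K$ acts faithfully and transitively on the $m$ points of $\mathcal I$. As $m$ is prime this action is primitive, and as $\overline G$ is solvable, Burnside's theorem forces $\overline G\le \mathrm{AGL}(1,m)$; thus $\overline G=C_m\rtimes C_e$ with $e\mid m-1$, the normal $C_m$ acting regularly on $\mathcal I$ and the point stabiliser $C_e$ acting freely (semiregularly) on the remaining points. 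Writing $m-1=2^t r$ with $r$ odd, this already records $v_2(e)\le t$. The crucial dichotomy is that, since images of conjugate elements are conjugate, \emph{either all involutions of $G$ lie in $K$, or none does}.

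In the case where no involution lies in $K$, which I claim yields exactly $Z_m$, the subgroup $K$ has no involutions and hence has odd order; moreover the $m$ involutions map injectively onto the $m$ involutions of $\overline G$, so $e$ is even. Consequently $v_2(|G|)=v_2\bigl(|K|\,m\,e\bigr)=v_2(e)$ with $1\le v_2(e)\le t$, while the odd part $|K|\,m\,e_{2'}$ is an odd multiple of $m$. This places $|G|$ in $Z_m$, and it is the ``clean'' case precisely because $K$ being odd means the whole $2$-part comes from $e$.

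In the case where every involution lies in $K$, which I claim yields exactly $Y_m$, every involution centralises every involution, so $E:=\langle\mathcal I\rangle$ is elementary abelian with $E\setminus\{1\}=\mathcal I$; hence $|E|=m+1=2^n$ and $m=2^n-1$ is a Mersenne prime, matching $Y_m=\varnothing$ otherwise. Here $E\trianglelefteq G$ lies in every Sylow $2$-subgroup, and $\overline G=G/C_G(E)\hookrightarrow\GL(n,2)$ acts transitively on $E\setminus\{1\}$ with $C_m$ regular. I would pin down $a:=v_2(|G|)$ in two steps. First, using Cauchy together with a Frattini/Schur--Zassenhaus argument, I would produce an element inducing a generator $\overline\sigma$ of $C_m$ and normalising a Sylow $2$-subgroup $P_K$ of $K$. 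Since every nontrivial power of $\overline\sigma$ fixes no nonzero vector of $E$ and all involutions of $P_K$ lie in $E$, one gets $C_{P_K}(\overline\sigma^{\,i})=1$ for all $i\not\equiv 0$, i.e.\ $\overline\sigma$ acts fixed-point-freely on $P_K$. Primality of $m$ now pays off: the $\overline{\mathbb F}_2$-eigenvalues of $\overline\sigma$ on the composition factors of $P_K$ are $m$-th roots of unity, all different from $1$, hence all \emph{primitive} (as $m$ is prime), so they fall into Frobenius orbits of size $\mathrm{ord}_m(2)=n$; therefore $n\mid v_2(|P_K|)$. Second, I would show $\overline G$ has odd order (so $P=P_K\le K$): any involution $\overline t\in\overline G\le\GL(n,2)$ is unipotent, whence $\dim C_E(\overline t)\ge n/2\ge 2$ for $n\ge 4$, so $\overline t$ fixes at least three nonzero vectors, i.e.\ at least two points of $\mathcal I$; this contradicts the fact that a nonidentity element of a solvable transitive group of prime degree fixes at most one point. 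Thus $\overline G$ is odd, $v_2(|G|)=v_2(|P_K|)$ is divisible by $n$, and the odd part is a multiple of $m$, placing $|G|$ in $Y_m$.

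For the reverse inclusions I would exhibit explicit realisations: for $Z_m$ take $(C_m\rtimes C_{2^a})\times C_d$ with $C_{2^a}\le\mathbb F_m^{*}$ acting freely (possible as $a\le t$) and $d$ odd, a Frobenius group times an odd cyclic factor, giving order $2^a m d$ with exactly $m$ mutually conjugate involutions; for $Y_m$ (Mersenne case) take $\bigl(\mathrm{GR}(2^k,n)^{+}\rtimes C_m\bigr)\times C_d$, where $C_m$ is the group of Teichm\"uller units of the Galois ring $\mathrm{GR}(2^k,n)$ acting on its additive group, giving order $2^{kn}m d$ and realising every $a$ with $n\mid a$. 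The main obstacle is the subcase-A $2$-part: the fixed-point-free eigenvalue argument for $n\mid v_2(|P_K|)$, and especially the proof that $\overline G$ is odd, whose geometric fixed-point count only works for $n\ge 4$. Since $m=2^n-1$ prime forces $n$ prime, the only small exceptions are $n=3$ (giving $m=7$, excluded by hypothesis) and $n=2$ (giving $m=3$); for $m=3$ I would simply invoke the known determination of $X_3$ in \cite{SMG}, which indeed equals $Y_3\cup Z_3$. The delicate bookkeeping in isolating a fixed-point-free Singer element acting on $P_K$, and the handling of these small cases, is where I expect the real work to lie.
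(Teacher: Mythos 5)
Your proposal is correct in substance, but for the hard inclusion $X_m \subseteq Y_m \cup Z_m$ it takes a genuinely different route from the paper. The paper proceeds by induction on $|G|$ through a minimal normal subgroup $N$: when $|N|$ is even it invokes Lemma \ref{ndividesa} (whose proof rests on the heavy classification of $G/C_G(K)$ in Lemma \ref{el2gp}), and when $|N|$ is odd it either passes to $G/N$ by induction or, when $G/N$ is $1$-pyramidal, uses Proposition \ref{dihedral} to get $K \cong D_{2m}$, $N \cong C_m$, and $G/C_G(N) \hookrightarrow C_{m-1}$, whence $a \leqslant t$. You avoid induction entirely: you take solvability as input (Theorem \ref{maintheorem}, legitimately available since it is proved before and independently of Theorem \ref{orders}), embed $G/C_G(\langle\mathcal{I}\rangle)$ into $\mathrm{AGL}(1,m)$ by the classical theorem on solvable transitive groups of prime degree (this is Galois's theorem rather than Burnside's, but the fact you need is exactly this), and split according to whether the involutions lie in the kernel of the conjugation action. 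Your first case yields $Z_m$ directly; your Mersenne case re-derives the divisibility $n \mid a$ by a coprime, fixed-point-free action of an $m$-element on a Sylow $2$-subgroup of the kernel (in the spirit of Lemma \ref{properties}(5) but bypassing Lemma \ref{el2gp}), plus a neat unipotent fixed-space count showing the image in $\GL(n,2)$ has odd order. The constructions for the converse inclusion are essentially the paper's: your Frobenius groups $C_m \rtimes C_{2^a}$ are the groups of Proposition \ref{Npyr}, and your Galois-ring group is precisely the paper's homocyclic group $H_{l,n} \rtimes \langle\gamma\rangle$ with the lifted Singer cycle, packaged more slickly. What each approach buys: yours is more direct and replaces the inductive bookkeeping by classical permutation-group theory; the paper's stays self-contained within its Section 2 lemmas and never needs solvability of $G$ as an explicit hypothesis.

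A few repairs your sketch needs, none fatal. Injectivity of the involutions into $G/K$ in your first case requires an argument: if $xK=yK$ then $xy \in K$ centralizes $x$, so $x$ and $y$ commute, making $xy$ an involution inside the odd-order group $K$ unless $x=y$. In your Mersenne case, the Singer element you produce has order $m^j$, not necessarily $m$, so its eigenvalues are $m^j$-th roots of unity; the conclusion survives because $\mathrm{ord}_{m^j}(2)$ is a multiple of $\mathrm{ord}_m(2)=n$, so every nontrivial irreducible constituent still has dimension divisible by $n$ (and you should say explicitly that coprimality transfers fixed-point-freeness to the factors of a $\sigma$-invariant series). The unipotent argument in fact works for all $n \geqslant 3$: the fixed space has dimension at least $\lceil n/2 \rceil \geqslant 2$, giving at least three fixed points, not two. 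Finally, your treatment of $m=3$ via \cite{SMG} mirrors exactly what the paper does inside Lemma \ref{ndividesa}, so it is consistent with the paper's own dependency structure.
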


So for example we have
$$\begin{array}{l}
X_5 = \{10d\ :\ d\ \mbox{odd}\} \cup \{20d\ :\ d\ \mbox{odd}\}. \\
X_{31} = \{2^a \cdot 31 \cdot d\ :\ 5|a,\ d\ \mbox{odd}\} \cup \{62 d\ :\ d\ \mbox{odd}\}.
\end{array}$$

Our paper is organized as follows. In Section \ref{section_properties} we introduce some lemmas and prove some properties of pyramidal groups. In Section \ref{section_almostsimple} we discuss the so-called minimal almost-simple groups (which are classified in \cite{DanLevy}), which we need to prove the main theorem.  In Sections \ref{section_mprime}, \ref{primepower} we prove Theorem \ref{maintheorem} and in Section \ref{section_orders} we prove Theorem \ref{orders}.

\section{Pyramidal groups} \label{section_properties}

In this section, we study the basic properties of pyramidal groups. Recall that, if a group of even order is $m$-pyramidal, then $m$ is odd. If $x,y$ are group elements, we will use the notation $x^y$ for $y^{-1}xy$.

\begin{lemma}\label{properties}
    Let $m \geqslant 1$ be an odd integer and let $G$ be an $m$-pyramidal group. Write $|G|=2^a\cdot d$ with $d$ odd and let $H$ be a subgroup of $G$. Denote by $K$ the (characteristic) subgroup of $G$ generated by all the involutions of $G$ and define $C:=C_G(K)$. Then 
    \begin{enumerate}
        \item If $H$ has even order and $HC=G$, then $H$ is $m$-pyramidal.
        \item If $p$ is any prime divisor of $|G|$, $Q$ is a Sylow $p$-subgroup of $C$ and $H=N_G(Q)$, then $H$ is $m$-pyramidal.
        \item If $m$ is a prime, and if $H$ has even order and it contains a Sylow $m$-subgroup of $G$, then $H$ is $m$-pyramidal.
        \item Assume $H \unlhd G$ and that $|H|$ is odd. Let $\varepsilon$ be an involution in $G$ and let $\ell$ be the number of elements $h \in H$ with the property that $h^{\varepsilon} = h^{-1}$. Then $G/H$ is $m/\ell$-pyramidal. In particular, if $G$ is $1$-pyramidal then so is $G/H$ and, if the involutions of $G$ commute pairwise, then $G/H$ is $m$-pyramidal.
        \item If $m$ is a prime number and $H$ is a normal $2$-subgroup of $G$ then $|H|\equiv 1\mod m$. 
    \end{enumerate}
\end{lemma}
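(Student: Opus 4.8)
The plan is to route every part through the conjugation action of $G$ on the set $\Omega$ of its $m$ involutions. Since all involutions are conjugate this action is transitive, and an element acts trivially precisely when it centralizes every involution, i.e. when it lies in $C=C_G(K)$; hence $G/C$ embeds as a transitive subgroup of $\Sym(\Omega)$. Two further observations drive the argument: every element of $C$ fixes each involution under conjugation (that is the definition of $C$), and $K$, being generated by the $m\ge 1$ involutions, has even order.

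For part (1) I would use the hypothesis $HC=G$ together with the first observation. Writing an arbitrary $g\in G$ as $g=hc$ with $h\in H$, $c\in C$, conjugation of an involution $x$ gives $x^{g}=(x^{h})^{c}=x^{h}$, because $x^{h}\in K$ is centralized by $c$. Thus the $G$-orbit and the $H$-orbit of any involution coincide, so the $m$ involutions form a single $H$-orbit. Since $H$ has even order it contains at least one involution, and all of its $G$-conjugates (hence all $m$ involutions) then lie in $H$; these are exactly the involutions of $H$ and they are $H$-conjugate, so $H$ is $m$-pyramidal. Part (2) reduces to part (1): $C$ is characteristic, so by the Frattini argument $G=C\,N_G(Q)=HC$, while $C$ centralizing $K$ gives $K\le C_G(Q)\le N_G(Q)=H$, forcing $|H|$ to be even.

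For part (3) I would exploit that $m$ is prime. Then $G/C\le\Sym(\Omega)$ is transitive of prime degree $m$, so its Sylow $m$-subgroup has order $m$ and is generated by an $m$-cycle, hence is transitive on $\Omega$. A Sylow $m$-subgroup $P$ of $G$ maps onto such a subgroup, so $P$ itself already acts transitively on the $m$ involutions. As $H\supseteq P$ also acts transitively and $H$ has even order, the argument of part (1) shows $H$ contains all $m$ involutions and permutes them transitively, hence is $m$-pyramidal. The same transitivity of $P$ powers part (5): for a normal $2$-subgroup $H$, counting the $P$-orbits on $H$ gives $|H|\equiv|C_H(P)|\pmod m$ (non-fixed orbits have $m$-power size); but $C_H(P)$ is a $2$-group, and any involution it contained would be an involution of $G$ fixed by $P$, contradicting that $P$ acts transitively, hence fixed-point-freely, on $\Omega$. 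Therefore $C_H(P)=1$ and $|H|\equiv1\pmod m$.

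Part (4) is the most delicate, and I expect the bookkeeping there to be the main obstacle. With $|H|$ odd, I would first check that every involution of $G/H$ lifts to an involution of $G$: if $g^2\in H$ and $g\notin H$, then comparing $2$-parts shows $g$ equals an involution times an element of $\langle g^2\rangle\le H$, so $gH=\varepsilon H$ with $\varepsilon^2=1$. Consequently the involutions of $G/H$ are exactly the images of those of $G$, all conjugate. For the count, a direct computation shows that for $h\in H$ the product $\varepsilon h$ is an involution exactly when $h^{\varepsilon}=h^{-1}$, so each coset $\varepsilon H$ meeting $\Omega$ contains precisely $\ell$ involutions; I would verify that $\ell$ is independent of $\varepsilon$ using that all involutions are conjugate and $H$ is normal. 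Partitioning $\Omega$ into these cosets then yields exactly $m/\ell$ involutions in $G/H$. Finally, if $G$ is $1$-pyramidal then $m/\ell\ge1$ forces $\ell=1$; and if the involutions of $G$ commute pairwise then $\varepsilon$ commutes with each involution $\varepsilon h$, giving $h=h^{-1}$ and hence $h=1$ since $H$ has odd order, so again $\ell=1$ and $G/H$ is $m$-pyramidal.
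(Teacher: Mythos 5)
Your proposal is correct and follows essentially the same route as the paper: part (1) via the factorization $g=hc$ with $c$ centralizing $K$, part (2) via Frattini plus $K\leqslant N_G(Q)$, part (4) via lifting involutions of $G/H$ through odd-order elements and counting the set $I_{\varepsilon}=\{h\in H : h^{\varepsilon}=h^{-1}\}$, and parts (3) and (5) via the orbits of a Sylow $m$-subgroup on the $m$ involutions. Your phrasing of (3) and (5) through the transitive prime-degree image $G/C\leqslant\Sym(\Omega)$ and the congruence $|H|\equiv|C_H(P)|\pmod m$ is only a cosmetic repackaging of the paper's orbit-size argument, and all the steps you flag for verification (independence of $\ell$ from $\varepsilon$, fixed-point-freeness) go through exactly as in the paper.
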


\begin{proof}
Part 1. Assume $G$ is $m$-pyramidal and let $H \leqslant G$ with $HC=G$ and $|H|$ even. If $h \in H$ is an involution and $g \in G$ then, writing $g=xc$ where $x \in H$, $c \in C$, we have $h^g=h^{xc}=h^x \in H$. This implies that all the involutions of $G$ belong to $H$ and that they are conjugate in $H$. We deduce that $H$ is $m$-pyramidal.

Part 2. With the help of Frattini's Argument, we have that $HC=G$. Since $Q\leqslant C$, this implies that $K$ centralizes $Q$, and hence $K\leqslant H$, so $|H|$ is even. By item (1), $H$ is $m$-pyramidal.

Part 3. Since $|H|$ is even, $H$ contains at least one involution. Note that $|G:C_G(x)|=m$ for all involutions $x$ of $G$, this implies that $|G/C|$ is a multiple of $m$, and hence a Sylow $m$-subgroup $S$ of $G$ is not contained in $C$. Since $m$ is a prime number, a nontrivial $S$-orbit of the conjugation action on the $m$ involutions must have size $m$. This means that $S$ acts transitively on the involutions, therefore all involutions of $G$ are contained in $H$ and $H$ is $m$-pyramidal.    

Part 4. If $xH$ is an involution of $G/H$, then $x^2 \in H$, hence $o(x) = 2t$ with $t$ odd and $x^t$ is an involution in $G$. Since $Hx$ is an involution in $G/H$, we have $Hx=(Hx)^t=Hx^t$. Hence every involution of $G/H$ has the form $H \varepsilon$ where $\varepsilon$ is an involution of $G$. The involutions of $G$ belonging to the coset $H \varepsilon$ have the form $h \varepsilon$ with $h \in H$ and $(h \varepsilon)^2 = 1$, equivalently $h^{\varepsilon} = h^{-1}$. Therefore $H \varepsilon$ contains $|I_{\varepsilon}|$ involutions, where 
$$I_{\varepsilon} := \{h \in H\ :\ h^{\varepsilon} = h^{-1}\}.$$ 
Since all the involutions are conjugate, the size of $I_{\varepsilon}$ does not depend on $\varepsilon$, let us call it $\ell$. Since each coset of $H$ corresponding to an involution of $G/H$ contains $\ell$ involutions, $G/H$ contains $m/\ell$ involutions. If $m=1$ then of course $m/\ell=1$, in other words, if $G$ is $1$-pyramidal, then $G/H$ is $1$-pyramidal. Assume that the involutions commute pairwise. If there exists an involution $h \varepsilon \in H \varepsilon$ with $h \neq 1$, then $\varepsilon$ commutes with $h \varepsilon$, so $h = (h \varepsilon) \varepsilon$ is an involution, contradicting the fact that $|H|$ is odd. This implies that $\ell=1$, so $G/H$ is $m$-pyramidal.

Part 5. Since $|G:C_G(x)|=m$ for every involution $x$, we have that $m$ divides $|G/C|$ so there is an element $g \in G$ of order a power of $m$ which acts nontrivially on the set of involutions. Since $m$ is a prime number, $g$ does not fix any involution (either all the orbits have size $1$ or there is only one orbit of size $m$). Since $H$ is normal in $G$, the group $\langle g \rangle$ acts on $H$ by conjugation. If $1 \neq h \in H$ is fixed by $g$ then $g$ fixes some power of $h$ which is an involution, so no nontrivial element of $H$ is fixed by $g$. Therefore any orbit of $\langle g \rangle$ acting on $H \setminus \{1\}$ has size a power of $m$ larger than $1$. This implies that $m$ divides $|H|-1$.
\end{proof}

\begin{lemma} \label{quatpyr}
Let $G$ be a finite group whose Sylow $2$-subgroups have only one element of order $2$. Then $G$ is pyramidal.
\end{lemma}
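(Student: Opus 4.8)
The plan is to prove the stronger statement that all involutions of $G$ form a single conjugacy class, which is precisely what it means for $G$ to be pyramidal. The engine of the argument is Sylow's theorem combined with the hypothesis: if $P$ is a Sylow $2$-subgroup of $G$, then $P$ contains a unique element of order $2$. (By \cite[Theorem 6.11]{MI} such a $P$ is cyclic or generalized quaternion, but I would not actually need this structural description — only the uniqueness of the involution in $P$, which is the hypothesis itself.)

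First I would note that the hypothesis forces $P \neq 1$, since $P$ has an element of order $2$; hence $|G|$ is even and $G$ has at least one involution. Next, take any two involutions $t, t'$ of $G$. Each lies in some Sylow $2$-subgroup, say $t \in P_1$ and $t' \in P_2$. Since $P_1$ has a unique element of order $2$, that element must be $t$, and likewise $t'$ is the unique involution of $P_2$. By Sylow's theorem there exists $g \in G$ with $P_2 = P_1^g$. Conjugation by $g$ is an isomorphism $P_1 \to P_2$, so it carries the set of order-$2$ elements of $P_1$ bijectively onto the set of order-$2$ elements of $P_2$; as each of these sets is a singleton, it must send $t$ to $t'$, that is, $t^g = t'$.

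This shows that any two involutions of $G$ are conjugate, so the involutions of $G$ constitute a single conjugacy class. Letting $m$ denote the number of involutions (at least $1$ by the first step), we conclude that $G$ is $m$-pyramidal, hence pyramidal. I do not expect a genuine obstacle here: the argument is immediate once the pieces are assembled. The only points requiring a small amount of care are the two elementary verifications that $G$ really does contain an involution, and that an isomorphism between two Sylow $2$-subgroups must match up their unique involutions — the latter holding simply because any isomorphism preserves the orders of elements.
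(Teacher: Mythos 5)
Your proof is correct and is essentially identical to the paper's: both arguments take two involutions, place them in Sylow $2$-subgroups, invoke Sylow conjugacy, and use the uniqueness of the involution in a Sylow $2$-subgroup to conclude the conjugating element carries one involution to the other. The extra remarks you include (that $|G|$ must be even, and that conjugation preserves element orders) are fine but are left implicit in the paper.
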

\begin{proof}
Let $x,y$ be two involutions of $G$. There exist two Sylow $2$-subgroups $P,Q$ of $G$ with $x \in P$ and $y \in Q$. Let $g \in G$ be such that $g^{-1}Pg=Q$. Then $Q$ contains $y$ and $g^{-1}xg$. Since $Q$ contains only one involution, it follows that $g^{-1}xg=y$.
\end{proof}

Recall that a finite $2$-group has a unique element of order $2$ if and only if it is cyclic or generalized quaternion (see \cite[Theorem 6.11]{MI}). As usual, we denote by $O(G)$ the maximal normal subgroup of odd order in $G$. 

\begin{lemma} [Theorem 2 of \cite{Suzuki}]\label{quaternionsylow}
 Let $G$ be a finite group whose $2$-Sylow subgroup is a generalized quaternion group. Then $G/O(G)$ has a center of order $2$.   
\end{lemma}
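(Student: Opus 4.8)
The plan is to recognize the statement as the Brauer--Suzuki theorem and to deduce it from Glauberman's $Z^*$-theorem (whose relevant special case is exactly this result). Write $S$ for a Sylow $2$-subgroup of $G$; since $S$ is generalized quaternion it has a unique involution $z$, which is central in $S$. I would first check that $z$ is \emph{isolated} in $G$, meaning that $z$ is not $G$-conjugate to any other involution lying in $C_G(z)$. Because $z \in Z(S)$ we have $S \leqslant C_G(z)$, so $S$ is also a Sylow $2$-subgroup of $C_G(z)$ and every Sylow $2$-subgroup of $C_G(z)$ is generalized quaternion. A generalized quaternion group contains no copy of $C_2 \times C_2$ and hence has a unique involution; consequently $C_G(z)$ contains no involution other than $z$, since two distinct commuting involutions would generate a $C_2 \times C_2$ inside a Sylow $2$-subgroup of $C_G(z)$. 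Thus the isolation hypothesis holds, vacuously.

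Applying the $Z^*$-theorem then gives $\bar z := zO(G) \in Z(\bar G)$, where $\bar G := G/O(G)$. It remains to show $|Z(\bar G)| = 2$. First, $O(\bar G) = 1$ by maximality of $O(G)$ (the preimage in $G$ of a normal odd-order subgroup of $\bar G$ is again normal of odd order, hence lies in $O(G)$). Now $Z(\bar G)$ is abelian and normal, so its subgroup of odd order is normal of odd order in $\bar G$ and therefore trivial; thus $Z(\bar G)$ is a $2$-group. As a normal $2$-subgroup it lies in every Sylow $2$-subgroup, and being central it lies in the center of each. Writing $\bar S$ for the image of $S$ in $\bar G$, which is still generalized quaternion and Sylow (the quotient map is injective on the $2$-group $S$ since $O(G)$ has odd order), we obtain $Z(\bar G) \leqslant Z(\bar S)$. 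Because $\bar S$ is generalized quaternion, $|Z(\bar S)| = 2$, and since $1 \neq \bar z \in Z(\bar G) \leqslant Z(\bar S)$ we conclude $Z(\bar G) = \langle \bar z \rangle$ has order $2$.

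The only genuine obstacle is the $Z^*$-theorem (equivalently the Brauer--Suzuki theorem) itself, which is deep: every classical proof passes through modular representation theory and block theory, building suitable generalized characters of the principal $2$-block and exploiting the rigid character theory of generalized quaternion groups, together with the orthogonality relations, to force $z$ to act as a central (scalar) element. Reproving this would take us far outside the elementary methods of this section, which is precisely why we quote it from \cite{Suzuki}; by contrast, the two surrounding steps—verifying that $z$ is isolated and reducing the center to order $2$—are elementary and use only the structure of generalized quaternion $2$-groups.
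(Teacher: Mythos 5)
Your proposal is correct, but note that the paper does not actually prove this lemma: it is quoted as a black box from Theorem 2 of \cite{Suzuki} (this is the Brauer--Suzuki theorem), so any proof you give is by definition a different route. Your route---deducing it from Glauberman's $Z^{*}$-theorem---is the standard modern one, and all your surrounding steps check out: since $z \in Z(S)$, the group $S$ is a Sylow $2$-subgroup of $C_G(z)$, so any involution $t \neq z$ of $C_G(z)$ would commute with $z$ and generate a Klein four subgroup inside a generalized quaternion Sylow $2$-subgroup of $C_G(z)$, which is impossible; hence $z^G \cap C_G(z) = \{z\}$ and the $Z^{*}$-theorem yields $\overline{z} \in Z(\overline{G})$ for $\overline{G} = G/O(G)$. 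Your pinning down of the order is also correct: $O(\overline{G})=1$ forces the odd part of $Z(\overline{G})$ to be trivial, so $Z(\overline{G})$ is a normal $2$-subgroup, hence contained in $Z(\overline{S})$, which has order $2$ because $\overline{S} \cong S$ is generalized quaternion, and $\overline{z} \neq 1$ since $o(z)=2$ while $|O(G)|$ is odd. As for what each approach buys: essentially nothing separates them in logical depth, since the $Z^{*}$-theorem strictly generalizes Brauer--Suzuki and its known proofs likewise pass through block theory, so your derivation is no more self-contained than the paper's citation (though it is not circular, as Glauberman's proof is independent). The genuine gain of your version is expository: it isolates precisely which deep input is required (the isolation of the unique involution) and makes explicit that the remaining content---both the verification of isolation and the reduction from ``$\overline{z}$ is central'' to ``the center has order exactly $2$''---is elementary generalized-quaternion $2$-group theory.
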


A group $G$ is called primitive if it admits a maximal subgroup $M$ whose normal core $M_G := \bigcap_{g \in G} M^g$ is trivial. The index $|G:M|$ is called a primitivity degree of $G$. This corresponds to saying that the transitive action of $G$ on the set $\Omega$ of cosets of $M$ in $G$ is faithful and also primitive, i.e. it does not preserve any partition of $\Omega$. A good reference for the general properties of primitive groups is \cite{BE}. In the following result, we are concerned with groups in which any two involution do not commute.

\begin{prop} \label{dihedral}
    Let $G$ be a finite group containing precisely $m$ involutions, call them $x_1,\ldots,x_m$. If the following hold
    \begin{enumerate}
        \item $x_i x_j \neq x_j x_i$ for all $i \neq j$,
        \item $G=\langle x_1,\ldots,x_m \rangle$,
    \end{enumerate}
then $G$ is $m$-pyramidal and $G \cong G' \rtimes C_2$ with $|G'|$ odd. Moreover, if $m$ is a prime number, then $G$ is a dihedral group of order $2m$.
\end{prop}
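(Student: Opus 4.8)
The plan is to first extract a dihedral structure from the non-commuting hypothesis, then analyze a Sylow $2$-subgroup to obtain the semidirect decomposition, and finally treat the prime case by a reduction argument.

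First I would show that for distinct involutions $x_i, x_j$ the product $x_ix_j$ has odd order. Indeed $D := \langle x_i, x_j\rangle$ is dihedral of order $2n$ with $n = o(x_ix_j) \geq 3$ by (1); if $n$ were even then $D$ would contain a central involution commuting with $x_i$, a second involution of $G$ contradicting (1). Since all reflections in a dihedral group of order $2n$ with $n$ odd are conjugate, $x_i$ and $x_j$ are conjugate in $D \leq G$; hence all $m$ involutions are conjugate and $G$ is $m$-pyramidal.

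Next I would pin down the Sylow $2$-subgroups. The same argument shows that a Sylow $2$-subgroup $P$ contains a unique involution: two distinct involutions of $P$ would generate a dihedral $2$-group, which is either abelian (forcing them to commute) or contains a central involution, in both cases contradicting (1). Thus $P$ is cyclic or generalized quaternion. I would rule out the quaternion case using Lemma \ref{quaternionsylow}: writing $\bar G = G/O(G)$, its centre has order $2$, generated by the image $\bar z$ of the unique involution $z$ of $P$; for every other involution $t$ the element $zt$ has odd order, so $\bar z\,\bar t$ has odd order, and being a product of two commuting involutions it must be trivial, i.e. $\bar t = \bar z$. Since $G$, hence $\bar G$, is generated by involutions, this gives $\bar G = \langle \bar z\rangle \cong C_2$, contradicting that $P$ is quaternion of order at least $8$. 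So $P$ is cyclic, and $G$ has a normal $2$-complement $N$ of odd order with $G = N \rtimes P$. All involutions lie in the single coset $Nz$, so by (2) we get $G = \langle Nz\rangle \subseteq N\langle z\rangle$, forcing $P = \langle z\rangle$ of order $2$. Finally $G^{\mathrm{ab}}$ is cyclic of order dividing $2$ (all involutions map to a common element), while $G/N \cong C_2$ is a quotient, so $G' = N$; this yields $G \cong G' \rtimes C_2$ with $|G'|$ odd.

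For the last assertion assume $m$ is prime (so $m \geq 3$) and write $N^- = \{n \in N : n^z = n^{-1}\}$, so that the involutions are exactly $\{nz : n \in N^-\}$ and $|N^-| = m$. Since $N$ acts transitively by conjugation on the $m$ involutions, with $z^n = (n^{-1}n^z)z$, the map $f(n) = n^{-1}n^z$ satisfies $f(N) = N^-$. If $N$ is abelian then $f$ is a homomorphism with kernel $C_N(z)$, so $N^- = f(N)$ is a subgroup of order $m$; as $N = \langle N^-\rangle$ (again from (2)) and $m$ is prime, $N = N^- \cong C_m$ with $z$ inverting $N$, i.e. $G \cong D_{2m}$. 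The crux is therefore to prove $N$ abelian, and I would do this by induction on $|G|$: take a minimal normal subgroup $M \leq N$, which by the Feit--Thompson theorem (odd order) is elementary abelian. By Lemma \ref{properties}(4), $G/M$ is $m/\ell$-pyramidal with $\ell = |M \cap N^-|$, and since $m$ is prime $\ell \in \{1,m\}$. If $\ell = m$ then $M \cap N^- = N^-$, whence $N = \langle N^-\rangle \leq M$ is abelian. If $\ell = 1$ then $z$ centralizes $M$ (eigenspace decomposition of $M$ under $z$, with $p$ odd), so every involution centralizes the normal subgroup $M$ and thus $M \leq Z(G)$; one checks $G/M$ again satisfies (1) and (2) and is $m$-pyramidal, so by induction $G/M \cong D_{2m}$ and $N/M$ is cyclic, which together with $M \leq Z(N)$ makes $N$ abelian, whence the abelian computation gives $N = N^-$, contradicting $M \cap N^- = \{1\}$. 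Hence $N$ is abelian and $G \cong D_{2m}$. The main obstacle is exactly this last step: excluding a nonabelian odd-order $N$, which is what forces us to invoke Feit--Thompson together with the inductive reduction through minimal normal subgroups.
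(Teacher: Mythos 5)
Your proof is correct, and while its first half runs along essentially the same lines as the paper's (a Sylow $2$-subgroup has a unique involution, Brauer--Suzuki via Lemma \ref{quaternionsylow} disposes of the generalized quaternion case, cyclic Sylow $2$-theory gives the normal $2$-complement, and a parity/abelianization argument identifies $G'$ with the odd-order complement), your treatment of the prime case takes a genuinely different route. The paper regards $M=C_G(\varepsilon)$ as a maximal subgroup of prime index $m$ and splits on whether its normal core is trivial: in the core-free case it invokes the structure of solvable primitive groups of prime degree to get $G=V\rtimes M$ with $V\cong C_m$, and otherwise it inducts through a minimal normal subgroup of the core. You avoid permutation-group theory entirely: you reduce the whole problem to showing that $N=G'$ is abelian, using the inversion set $N^-=\{n\in N: n^z=n^{-1}\}$ and the map $f(n)=n^{-1}n^z$ (a homomorphism onto $N^-$ precisely when $N$ is abelian), and you run the induction on $|G|$ through a minimal normal subgroup $M\leqslant N$, splitting on $\ell=|M\cap N^-|\in\{1,m\}$ via Lemma \ref{properties}(4). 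Both arguments share the same inductive skeleton (minimal normal subgroup, Lemma \ref{properties}(4), the inductive branch ending in a contradiction), but yours is more elementary and self-contained, whereas the paper's primitivity viewpoint ties in with machinery it uses elsewhere. One small point: your ``one checks $G/M$ again satisfies (1)'' does require the computation the paper carries out explicitly for its quotient $G/L$; in your setting it is easy because $M\leqslant Z(G)$: if $\bar x_i,\bar x_j$ were distinct commuting involutions of $G/M$, then $(x_ix_j)^2=[x_i,x_j]\in M\leqslant Z(G)$, and since $x_ix_j$ has odd order this forces $x_ix_j\in Z(G)$, hence $x_ix_j=x_jx_i$, contradicting (1) in $G$. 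With that detail filled in, the argument is complete.
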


\begin{proof}
First, note that the involutions of $G$ are all conjugate. Indeed, let $P$ be a Sylow $2$-subgroup of $G$ and $z \in P$ an involution belonging to the center of $P$, then $z$ commutes with all the involutions in $P$, so it is the only involution of $P$. By Lemma \ref{quatpyr}, $G$ is $m$-pyramidal. Moreover, $P$ is cyclic or generalized quaternion.

Let $O=O(G)$ be the largest normal subgroup of $G$ of odd order. We claim that $G/O$ has a central involution. If $P$ is cyclic, then $G$ is solvable by \cite[Corollary 5.14]{MI}, therefore a minimal normal subgroup $N/O$ of $G/O$ is, on one hand, an elementary abelian $2$-group, and on the other hand a section of $P$, which is cyclic, so $|N/O|=2$. This means that $G/O$ has a central involution. If $P$ is generalized quaternion, the claim follows from Lemma \ref{quaternionsylow}.
We deduce that there exists an involution $\varepsilon \in G$ such that $O\langle \varepsilon\rangle$ is normal in $G$, and hence the involutions of $G$ are all contained in $O \langle \varepsilon\rangle$. Since $G$ is generated by the involutions, $G = O \langle \varepsilon \rangle$. Since $O\cap \langle \varepsilon\rangle=1$, we have $G=O\rtimes \langle \varepsilon\rangle$ and hence the derived subgroup $G'$ is contained in $O$. If $a\in G$ is written as a product of involutions, $a = y_1 y_2 \ldots y_t$ with $y_s \in \{x_1, \ldots, x_m\}$ for all $s$, then $a\in G'$ if and only if $t$ is even. Indeed, since the involutions are conjugate, any product of two involutions is a commutator, and this implies that if $t$ is even then $a \in G'$. Conversely, if $a \in G'$ then $t$ must be even, since if $t$ is odd then $y_t = y_{t-1} \ldots y_1 a$ belongs to $G'$ since $t-1$ is even, contradicting the fact that $G' \leqslant O$. Therefore, if $\varepsilon \in G$ is an involution, then for every $g \in G \setminus G'$ we have $g \varepsilon \in G'$. This implies that $|G:G'|=2$ hence $G'=O$. Moreover, since $|O|$ is odd, the fact that $G$ is a semidirect product $O \rtimes C_2$ implies that $G$ is solvable.

Now assume that $m$ is a prime number. We will prove, by induction on $|G|$, that $G \cong D_{2m}$. We know that $G = G' \rtimes \langle \varepsilon \rangle$, $o(\varepsilon)=2$. Let $M := C_G(\varepsilon)$. Since $G$ is $m$-pyramidal, $|G:M|=m$, prime, so $M$ is a maximal subgroup of $G$. If the normal core $M_G$ of $M$ in $G$ is trivial then $G$ is primitive of prime degree $m$. Since $G$ is solvable, this implies that $G= V \rtimes M$ where $V \cong C_m$ is a minimal normal subgroup of $G$. On the other hand, since $C_G(V)=V$ we have $G/V \lesssim \Aut(C_m)\cong C_{m-1}$. It follows that $V$ contains $G'$ and the minimality of $V$ implies that $G'=V$. Therefore $G\cong C_m\rtimes C_2\cong D_{2m}$. 

Now assume $M_G \neq \{1\}$ and let $L$ be a minimal normal subgroup of $G$ contained in $M_G$. Let $N:=G'$. Clearly, $|L|$ is odd and $L\neq N$ since otherwise $N=L\leqslant M<G$ and, being $|G:N|=2$, $N=M$ and hence $m=2$, a contradiction. By Lemma \ref{properties}(4), $G/L$ is $1$-pyramidal or $m$-pyramidal. If $G/L$ is $1$-pyramidal, then $x_i x_j \in L$ for any two involutions $x_i$ and $x_j$ of $G$. Since every element of $G'$ is a product of an even number of involutions, $G'\leqslant L$, and since $|G:G'|=2$ we deduce that $G'=L$, contradicting the fact that $L\leqslant M$. Therefore $G/L$ is $m$-pyramidal. We now prove that any two involutions of $G/L$ do not commute. Let $x$ be an involution of $G$ with $x \neq \varepsilon$. If $x \varepsilon L = \varepsilon x L$ then $[x,\varepsilon] \in L \leqslant M = C_G(\varepsilon)$, so 
$$x \varepsilon x = [x,\varepsilon] \varepsilon = \varepsilon [x,\varepsilon] = \varepsilon x \varepsilon x \varepsilon$$ 
hence $\varepsilon$ commutes with $x \varepsilon x$, implying that $x \varepsilon x = \varepsilon$ (because two distinct involutions of $G$ cannot commute), so that $x \varepsilon = \varepsilon x$, a contradiction. This implies that the quotient group $G/L$ satisfies all the conditions of the statement, so by induction we may assume that $G/L \cong D_{2m}$. If $g \in G$ then $L = L^g \leqslant M^g = C_G(\varepsilon^g)$. Since the involutions are all conjugate and generate $G$, we must have $L \leqslant Z(G)$, so $L$ has prime order $p$ (being a minimal normal subgroup of $G$). Since $|G:L|=2m$, we deduce that $|G|=2mp$ and $N=G'$ is abelian because $N/L\cong C_m$ and $L\leqslant Z(N)$. 
Therefore $N\cong C_m\times C_p$ ($m\neq p$), $C_m\times C_m$ or $C_{m^2}$, and then $G\cong (C_m\times C_p)\rtimes C_2$, $(C_m\times C_m)\rtimes C_2$ or $C_{m^2}\rtimes C_2\cong D_{2m^2}$. Since $Z(G) \neq \{1\}$ and $G$ is generated by involutions, we deduce a contradiction.
\end{proof}

\begin{lemma} [Theorem 1 of \cite{Daniel}]\label{dihedralsylow}
Let $G$ be a finite group with dihedral Sylow $2$-subgroups. Then $G/O(G)$ is isomorphic to one of the following.
\begin{enumerate}
 \item A subgroup of $\PGammaL(2,q)$ containing $\PSL(2,q)$, $q$ odd.
 \item The alternating group $A_7$.
 \item A Sylow $2$-subgroup of $G$.
\end{enumerate}
\end{lemma}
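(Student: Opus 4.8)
This is the Gorenstein--Walter classification of finite groups with dihedral Sylow $2$-subgroups, and I should be candid that a genuine proof is one of the landmark arguments of $20$th-century group theory, far beyond what a short sketch can reproduce; what follows is only the line of attack. First I would pass to $\bar G = G/O(G)$ and observe that it suffices to determine its isomorphism type, since $O(\bar G) = 1$. The degenerate cases split off at once: if a Sylow $2$-subgroup $S$ is of order $2$ (cyclic), or more generally if $\bar G$ is $2$-constrained, then $\bar G$ collapses to a $2$-group and we land in case (3). So I would assume $S \cong D_{2^n}$ with $n \geqslant 2$ (so $|S| \geqslant 8$) and reduce, by induction on $|G|$, to the situation where $\bar G$ has no nontrivial solvable normal subgroup, i.e.\ essentially where $\bar G$ is simple (or almost simple), the remaining possibilities being absorbed into case (3).

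The combinatorial core of the reduction is a fusion analysis. A dihedral $2$-group has a unique cyclic maximal subgroup and two $S$-classes of reflections; using Burnside's fusion criterion together with the structure of $N_G(S)$, one shows that $G$ has at most two conjugacy classes of involutions and controls exactly how the involutions of $S$ fuse in $G$. I would then fix a central involution $t \in Z(S)$ and study $H = C_G(t)$. Its Sylow $2$-subgroup is again the dihedral group $S$ with $t \in Z(H)$, so by the inductive hypothesis the structure of $H/O(H)$ is already constrained; this pins down the ``shape'' of the involution centralizer up to a short list.

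The genuinely hard input---and the step I expect to dominate the difficulty---is the character- and block-theoretic machinery that converts this local information into a global identification. Here one invokes Brauer's theory of $2$-blocks and exceptional characters, in the spirit of Brauer--Suzuki--Wall: a dihedral Sylow $2$-subgroup forces the principal $2$-block to have a very rigid structure, yielding explicit relations among the degrees and multiplicities of the ordinary irreducible characters. From these relations, combined with the centralizer shape obtained above, one recovers enough arithmetic---the order of $G$ and the permutation character of a natural doubly transitive action---to apply Brauer's characterizations of $\PSL(2,q)$ (for $q$ odd, which is precisely the parity giving a dihedral, rather than elementary abelian, Sylow $2$-subgroup) and of $A_7$ by the centralizer of a central involution. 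Matching the finitely many surviving configurations against these characterizations produces exactly the groups lying between $\PSL(2,q)$ and $\PGammaL(2,q)$, the sporadic exception $A_7$, and the solvable families that fall into case (3), which is the asserted trichotomy. The principal obstacle throughout is that none of this is formal: the block-theoretic identities and the involution-centralizer characterizations are each substantial theorems, and it is their interlocking that makes the classification delicate rather than any single routine computation.
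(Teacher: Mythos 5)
The first thing to say is that the paper does not prove this statement at all: Lemma \ref{dihedralsylow} is imported verbatim, with attribution, as Theorem 1 of the cited reference --- it is the Gorenstein--Walter classification of finite groups with dihedral Sylow $2$-subgroups --- and the paper uses it strictly as a black box. So there is no internal proof to compare yours against, and deferring to the literature, which is what both you and the paper ultimately do, is the only sensible treatment. Your sketch does correctly identify the theorem and the architecture of its genuine proof: fusion analysis inside the dihedral Sylow $2$-subgroup, induction through the centralizer of a central involution, Brauer's theory of $2$-blocks and exceptional characters in the spirit of Brauer--Suzuki--Wall, and finally involution-centralizer characterizations of $\PSL(2,q)$ and $A_7$. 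As you concede, every hard step is invoked rather than proved, so what you have written is a roadmap and not a proof; but nothing more could reasonably be produced here, and as a roadmap it is essentially faithful.

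One concrete step of your proposed reduction is wrong, though, and worth flagging because you state it as if it were the easy part: the claim that if $\bar G$ is $2$-constrained (with $O(\bar G)=1$) then $\bar G$ collapses to a $2$-group and lands in case (3), so that all solvable configurations are ``absorbed into case (3).'' The group $S_4 \cong \PGL(2,3)$ is a counterexample: it has dihedral Sylow $2$-subgroups of order $8$, trivial odd core, and is $2$-constrained since $C_{S_4}(V_4)=V_4=O_2(S_4)$, yet it is not a $2$-group --- it belongs to case (1), being a subgroup of $\PGammaL(2,3)$ containing $\PSL(2,3)\cong A_4$. (The same applies to $A_4$ itself if one counts the Klein four-group as dihedral, which the statement's inclusion of all odd $q$ forces one to do.) So solvability does not route a group into case (3); the small solvable groups with dihedral Sylow $2$-subgroups are accounted for by $q=3$ in case (1), and the actual dichotomy in Gorenstein--Walter is more delicate than your reduction suggests. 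A second, smaller slip: a Sylow $2$-subgroup of order $2$ is cyclic, not dihedral, so that degenerate case does not arise under the hypothesis.
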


We now want to study pyramidal groups in which the involutions commute pairwise. First, we list some preliminary results.

\begin{lemma} \label{2tr}
Let $V=\mathbb{F}_p^n$ and let $H$ be a subgroup of $\GL(V)$. The semidirect product $G=V \rtimes H$ acts naturally (and transitively) on the right cosets of $H$ in $G$. This action is $2$-transitive if and only if $H$ acts transitively on $V \setminus \{0\}$.
\end{lemma}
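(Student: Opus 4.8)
The plan is to realize the coset action concretely as an affine action on $V$ and then invoke the standard criterion that a transitive action is $2$-transitive precisely when a point stabilizer is transitive on the remaining points.

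First I would identify the coset space with $V$. Since $V \cap H = \{0\}$ and $|V| = [G:H]$, the assignment $v \mapsto Hv$ (for $v \in V$) is a bijection from $V$ onto the set of right cosets of $H$ in $G$; indeed $Hv = Hv'$ forces $v - v' \in V \cap H = \{0\}$. Writing a general element of $G$ as a pair $(w,B)$ with $w \in V$ and $B \in H$, and computing $H(v,1)\cdot(w,B)$ and reducing it back to a representative in $V$, one finds that the coset action transported to $V$ is the affine action
$$v \longmapsto B^{-1}(v+w),$$
so that the subgroup $V$ of translations acts on $V$ as the regular translation action and the subgroup $H$ acts as the linear action (fixing the point $0$). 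In particular $V$ already acts transitively, so $G$ is transitive.

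Next I would determine the stabilizer of the point $0$. From the displayed formula, $(w,B)$ fixes $0$ if and only if $B^{-1}w = 0$, i.e. $w = 0$; hence the stabilizer of $0$ is exactly $H$, acting on $V \setminus \{0\}$ by $v \mapsto B^{-1} v$, which has the same orbits as the given linear action of $H$ on $V \setminus \{0\}$. Finally, recall the elementary fact that a transitive permutation group $G$ on a set $\Omega$ is $2$-transitive if and only if, for one (equivalently every) point $\omega \in \Omega$, the stabilizer $G_\omega$ is transitive on $\Omega \setminus \{\omega\}$. Applying this with $\Omega = V$ and $\omega = 0$, and using that $G_0 = H$ acts on $V \setminus \{0\}$ with the same orbits as the linear action, yields that $G$ is $2$-transitive on the cosets of $H$ if and only if $H$ is transitive on $V \setminus \{0\}$, as claimed.

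I expect the only delicate point to be the bookkeeping with the semidirect-product and coset conventions (left versus right), which governs whether the linear part appears as $B$ or $B^{-1}$ in the transported action; either way the orbits of $H$ on $V \setminus \{0\}$ are unchanged, so this ambiguity does not affect the conclusion. There is no substantial obstacle beyond verifying these identifications carefully and quoting the standard $2$-transitivity criterion.
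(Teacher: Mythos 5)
Your proof is correct and follows essentially the same route as the paper's: both invoke the standard criterion that a transitive action is $2$-transitive if and only if a point stabilizer acts transitively on the remaining points, identify the stabilizer of the base coset with $H$, and use $V \unlhd G$ together with $V \cap H = \{1\}$ to see that this stabilizer's action on the other cosets matches the linear action of $H$ on $V \setminus \{0\}$. The only cosmetic difference is that you transport everything to an explicit affine action on $V$, whereas the paper performs the equivalent computation directly with cosets ($Hvh = Hw \iff v^h = w$).
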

\begin{proof}
Let $\Omega := \{Hv\ :\ v \in V\}$ be the set on which $G$ acts. Saying that this action is $2$-transitive is equivalent to saying that there exists $\alpha \in \Omega$ such that the action of $\Stab_G(\alpha)$ on $\Omega \setminus \{\alpha\}$ is transitive. Equivalently, $H$ acts transitively on $\Omega \setminus \{H\}$. In other words, for all $v,w \in V$ with $v \neq 0 \neq w$, there exists $h \in H$ such that $Hvh=Hw$. Equivalently, for every two nonzero vectors $v,w \in V$ there exists $h \in H$ such that $vhw^{-1} \in H$. This condition is equivalent to $h^{-1}vhw^{-1} \in H$ and, since $V \unlhd G$ and $V \cap H=\{1\}$, this is equivalent to saying that $v^h=w$.
\end{proof}

\begin{thm}[Zsigmondy's Theorem \cite{KZ}]
Let $a$ and $n$ be integers greater $1$. Then there exists a prime divisor $q$ of $a^n-1$ such that $q$ does not divide $a^j-1$ for all $0<j<n$, except exactly the following cases:
\begin{enumerate}
\item $n=2$, $a=2^s-1$, where $s\geqslant 2$.
\item $n=6$, $a=2$.
\end{enumerate}
\end{thm}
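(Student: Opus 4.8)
The plan is to prove the existence of a \emph{primitive prime divisor} of $a^n-1$ — a prime $q$ with $q \mid a^n-1$ but $q \nmid a^j-1$ for all $0<j<n$ — by means of cyclotomic polynomials. The starting point is the factorization $a^n-1 = \prod_{d\mid n}\Phi_d(a)$, where $\Phi_d$ denotes the $d$-th cyclotomic polynomial. For a prime $q \nmid a$, write $e = \mathrm{ord}_q(a)$ for the multiplicative order of $a$ modulo $q$; then $q\mid a^n-1$ iff $e\mid n$, so that $q$ is a primitive prime divisor precisely when $e=n$. First I would show that $q$ is a primitive prime divisor if and only if $q\mid\Phi_n(a)$ and $q\nmid n$: the forward direction follows since $e\mid q-1$ forces $e=n<q$, and the converse rests on the fact that $q\mid\Phi_n(a)$ makes $n/e$ a power of $q$. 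This reduces the whole problem to analysing the prime factorization of the single integer $\Phi_n(a)$.

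The key number-theoretic lemma I would then establish describes the \emph{non-primitive} prime divisors of $\Phi_n(a)$: if a prime $q$ divides both $\Phi_n(a)$ and $n$, then $q$ must be the largest prime divisor $p$ of $n$, the quotient $n/\mathrm{ord}_q(a)$ is a power of $q$, and $q$ divides $\Phi_n(a)$ to the first power only — with the crucial exception that $q=2$ behaves differently and is tied to the case $n=2$. This is proved using the structure of $(\mathbb{Z}/q^k\mathbb{Z})^\times$ together with a lifting-the-exponent computation of $v_q(\Phi_n(a))$. The upshot is that, when the exceptional prime is odd, $\Phi_n(a)$ has \emph{no} primitive prime divisor if and only if $\Phi_n(a)=p$; the even case must be treated on its own, and there the only genuinely problematic situation is $n=2$ (equivalently, $\Phi_2(a)=a+1$ being a power of $2$), since for $n=2^k$ with $k\geq 2$ one checks $v_2(\Phi_{2^k}(a))=1$, forcing an odd, hence primitive, factor.

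It then remains to show that $\Phi_n(a) > p$ outside the listed exceptions. For this I would use the lower bound coming from $\Phi_n(a) = \prod_{\zeta}(a-\zeta)$ over primitive $n$-th roots of unity, namely $\Phi_n(a) \geq (a-1)^{\phi(n)}$ (with a sharper estimate needed when $a=2$, where this crude bound degenerates to $1$). Since $p \leq n$, this forces $\Phi_n(a) > p$ for all but finitely many small pairs $(n,a)$, which together with the valuation lemma yields a primitive prime divisor whenever the exceptional prime is odd; the case $n=2$ is handled separately as indicated. I would then check the finitely many remaining pairs directly. The computations $\Phi_2(2^s-1)=2^s$ (whose only prime factor $2$ divides $a-1$) and $\Phi_6(2)=3$ (equal to the largest prime factor of $6$) are exactly the two places where $\Phi_n(a)$ meets its exceptional value, yielding the two exceptions in the statement.

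The main obstacle I expect is the precise $q$-adic valuation lemma of the second paragraph, and in particular isolating the anomalous behaviour of the prime $2$ that produces the $n=2$ exception. A secondary difficulty is making the lower bound on $\Phi_n(a)$ strong enough to control the case $a=2$ cleanly — this is what the $n=6$ exception probes — so that the concluding case-check is genuinely finite and does not leave hidden sporadic exceptions.
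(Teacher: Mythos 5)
The paper does not prove this theorem, so there is no internal proof to compare yours against: Zsigmondy's theorem is imported from the literature (\cite{KZ}) and used as a black box, its only role being to supply the primitive prime divisor invoked in the proof of Lemma \ref{Mersenne}. Judged on its own terms, your proposal follows the standard cyclotomic-polynomial route and its skeleton is correct: the reduction of the problem to the prime factorization of $\Phi_n(a)$ via $a^n-1=\prod_{d\mid n}\Phi_d(a)$; the characterization of primitive prime divisors as the primes $q$ with $q\mid \Phi_n(a)$ and $q\nmid n$; the lemma that a prime dividing both $n$ and $\Phi_n(a)$ must be the largest prime factor $p$ of $n$ and, outside the case $(q,n)=(2,2)$, divides $\Phi_n(a)$ exactly once; and the growth estimate forcing $\Phi_n(a)>p$ outside a finite, explicitly checkable set of pairs, where the computations $\Phi_2(2^s-1)=2^s$ and $\Phi_6(2)=3$ recover precisely the two listed exceptions. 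Two caveats. First, what you have written is a plan rather than a proof: the lifting-the-exponent computation of $v_q(\Phi_n(a))$ and the sharpened lower bound for $a=2$ (the crude bound $(a-1)^{\phi(n)}$ degenerates there, as you note) are stated as goals rather than carried out, and these two steps contain essentially all of the content of the theorem, so the proposal stands or falls on work that is still to be done. Second, in the equivalence ``no primitive prime divisor iff $\Phi_n(a)=p$'' you should also exclude the possibility $\Phi_n(a)=1$; this follows from $|a-\zeta|>a-1\geqslant 1$ for every root of unity $\zeta\neq 1$, so it is a one-line fix, but it belongs in the argument. Neither caveat is a wrong turn: the outline is a sound description of the classical proof, and the places you flag as the expected obstacles are indeed exactly where the real work lies.
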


The following lemma shows, in particular, that prime powers of the form $2^n-1$ are actually prime numbers. In other words, Mersenne prime powers are prime.

\begin{lemma} \label{Mersenne}
If $p$ is a prime number such that $p^k = a^n-1$ for some integers $k \geqslant 1$ and $a,n > 1$, then one of the following holds.
\begin{enumerate}
    \item $(p,k,a,n)=(2,3,3,2)$,
    \item $a=2$, $k=1$ and $n$ is a prime number.
\end{enumerate}
\end{lemma}
\begin{proof}
Assume $n \not \in \{2,6\}$. Then Zsigmondy's theorem implies that $a^n-1$ has a primitive prime divisor, which of course must be equal to $p$. It follows that $p$ does not divide $a^j-1$ for all $j$ with $1 \leqslant j < n$. However, $p^k=a^n-1$ is of course divisible by $a-1$, so since $p$ does not divide $a-1$ we deduce that $a=2$. Since if $n=6$ then $a=2$, this argument shows that either $a=2$ or $n=2$. If $n=2$ then $p^k = (a-1)(a+1)$ implies that $(p,k,a,n)=(2,3,3,2)$. Now assume that $a=2$.

We have $p^k = 2^n-1$ hence $p$ is odd. We will prove that $k=1$ and that $n$ is a prime number. If $n=2$ the claim is obvious, now assume $n \geqslant 3$. 
Since $p$ is odd, it is congruent to $1$, $3$, $5$ or $7$ modulo $8$, so $p^2 \equiv 1 \mod 8$. On the other hand, $p^k = 2^n-1 \equiv -1 \mod 8$ since $n \geqslant 3$. This implies that $k$ is odd, hence we have a factorization
$$2^n = p^k+1 = (p+1) (p^{k-1}-p^{k-2}+\ldots-p+1)$$
The second factor is a sum of $k$ odd integers, so it is odd. Since it divides $2^n$, it must be equal to $1$, so that $2^n=p+1$ implying $k=1$. We now prove that $n$ is a prime number. Consider a factorization $n=rs$ with $r,s$ positive integers and $s > 1$. We will prove that $r=1$. Note that
$$p = 2^n-1 = 2^{rs}-1 = (2^r-1)(1+2^r+\ldots+2^{r(s-1)})$$
implying that one of the two factors must equal $1$. Since $s > 1$, the second factor is larger than $1$, so $2^r-1=1$, i.e. $r=1$.
\end{proof}

As usual, we denote by $\Phi(G)$ the Frattini subgroup of $G$, which is the intersection of all the maximal subgroups of $G$.

\begin{lemma} \label{el2gp}
Assume that $m$ is a prime power $p^k$ where $p$ is an odd prime distinct from $7$, $G$ is an $m$-pyramidal group and the subgroup $K$ of $G$ generated by the involutions is an elementary abelian $2$-group of size $2^n$, so that $2^n-1=m$. Then $m$ is a prime number and $G/C_G(K)$ is isomorphic to one of $C_m$, $C_m \rtimes C_n$, where in the second case the action is given by the Frobenius automorphism of a field $F$ of size $2^n$ acting naturally on $F \setminus \{0\}$.
\end{lemma}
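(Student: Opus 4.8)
The plan is to pass to the faithful action of $G/C_G(K)$ on $K$ and show that this quotient is a small linear group, namely an odd-order Frobenius group sitting inside the normalizer of a Singer cycle. First I would pin down $m$ and $n$. Since $K$ is elementary abelian of order $2^n$, its nonzero elements are exactly the $2^n-1$ involutions of $G$ (there are $m$ involutions and $2^n-1$ nonzero vectors, and $K$ is generated by the involutions, so these two sets coincide; in particular all involutions of $G$ lie in $K$). Thus $2^n-1=m=p^k$ with base $a=2$ and $n\geqslant 2$, and Lemma \ref{Mersenne} forces $k=1$ and $n$ prime. Hence $m=p$ is prime, which is the first assertion. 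Now set $C:=C_G(K)$ and $\bar G:=G/C$; as $K$ is abelian we have $K\leqslant C$, and $\bar G$ embeds in $\Aut(K)\cong\GL(n,2)=\GL(V)$ where $V:=K\cong\mathbb{F}_2^n$. Because all $m$ involutions are conjugate in $G$, the group $\bar G$ acts transitively on the $2^n-1=p$ nonzero vectors of $V$, so it is a transitive permutation group of prime degree $p$.

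Since $\bar G\hookrightarrow S_p$, a Sylow $p$-subgroup $S$ of $\bar G$ has order $p$ and is generated by an element $\sigma$ acting as a $p$-cycle on $V\setminus\{0\}$. As $\sigma$ fixes no nonzero vector, $1$ is not an eigenvalue of $\sigma$; and since the multiplicative order of $2$ modulo the prime $p=2^n-1$ is exactly $n$, every irreducible factor of $x^p-1$ over $\mathbb{F}_2$ other than $x-1$ has degree $n$. Therefore the minimal polynomial of $\sigma$ is a single irreducible factor of degree $n$, so $\langle\sigma\rangle$ is a Singer cycle, $V\cong\mathbb{F}_{2^n}$ with $\sigma$ acting as multiplication by a primitive element, and $N_{\GL(n,2)}(S)=\Gamma\mathrm{L}(1,2^n)\cong C_p\rtimes C_n$, the $C_n$ being generated by the Frobenius map $x\mapsto x^2$.

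The heart of the proof is to show that $\bar G$ has odd order when $n$ is an odd prime, equivalently that $\bar G$ contains no linear involution. This is where the hypothesis that \emph{all} involutions of $G$ lie in $K$ is used, and where $p\neq 7$ enters. I would argue that a linear involution $\bar t\in\bar G$ (which over $\mathbb{F}_2$ has a nonzero fixed space, since $\bar t-1$ is nilpotent) can be combined with the regular Singer action and a suitable block decomposition of $V$ — one needing a $2\times 2$ block together with a $3\times 3$ block, hence $n\geqslant 5$, i.e.\ $p\notin\{3,7\}$ — to exhibit an involution of $G$ lying \emph{outside} $K$, contradicting the hypothesis. This is the step I expect to be the main obstacle, and it is the genuine content of the lemma; the case $p=3$ (so $n=2$) needs no such argument, because $\GL(2,2)=\Gamma\mathrm{L}(1,4)\cong S_3$ already has the required shape, so any transitive $\bar G\leqslant\GL(2,2)$ is $C_3=C_m$ or $S_3=C_m\rtimes C_n$.

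Finally, once $|\bar G|$ is odd (for $n$ an odd prime) I would invoke Burnside's theorem on transitive permutation groups of prime degree: such a group is either $2$-transitive or a Frobenius subgroup of $\mathrm{AGL}(1,p)$ with regular normal subgroup of order $p$. A $2$-transitive group of degree $p$ has order divisible by $p(p-1)$, hence even, so the odd-order group $\bar G$ cannot be $2$-transitive and must be Frobenius with $S\trianglelefteq\bar G$. Consequently $\bar G\leqslant N_{\GL(n,2)}(S)=\Gamma\mathrm{L}(1,2^n)$, and writing $\bar G=S\rtimes T$ with $T\leqslant C_n$ and $n$ prime gives $|T|\in\{1,n\}$. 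Thus $\bar G\cong C_m$ or $\bar G\cong C_m\rtimes C_n$, with $C_n$ acting by the Frobenius automorphism on $F\cong\mathbb{F}_{2^n}$, exactly as claimed.
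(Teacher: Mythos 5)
Your proposal has a genuine gap, and it sits exactly where you yourself flag it: the claim that $\bar G = G/C_G(K)$ contains no involution when $n$ is an odd prime is never proved --- you only describe what you ``would argue.'' This step is not a technical detail to be filled in later; it is the entire content of the lemma, and it provably cannot be obtained from the action of $\bar G$ on $V=K$ alone. Indeed $\GL(n,2)$ itself acts transitively (even $2$-transitively) on $K\setminus\{0\}$ and contains many involutions, so no amount of linear algebra about transitive linear groups of prime degree can exclude involutions from $\bar G$; the contradiction must come from the extension $1 \to C_G(K) \to G \to \bar G \to 1$ together with the hypothesis that \emph{all} involutions of $G$ lie in $K$. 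This is what the paper's proof does, and it is the bulk of its length: assuming a minimal counterexample with $\bar G \cong \GL(n,2)$, $n>3$, it first proves $C_G(K)=\Phi(G)$ is a $2$-group and $K$ is the unique minimal normal subgroup of $G$; then, following Dempwolff, it lifts to $G$ a unipotent element $\tau$ and commuting elements of orders $3$ (on $2\times 2$ blocks) and $7$ (on a $3\times 3$ block) whose product $x$ has order $21$, satisfies $C_K(x)=\{1\}$, and obeys $(xC)^{\tau C}=x^8C$; finally a Hall-subgroup/Frattini argument inside the solvable group $\langle C,x,\tau\rangle$ produces an involution $t\in K$ normalizing $\langle x\rangle$, forcing $t\in C_K(x)=\{1\}$, a contradiction. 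Note also that the actual contradiction is \emph{not} an involution of $G$ outside $K$ (your proposed mechanism): it is an involution inside $K$ that is forced to be trivial. Your sketch names the right ingredients (the block decomposition, why $p\neq 7$ is needed to have both a $2\times 2$ and a $3\times 3$ block), but supplies no mechanism at the level of $G$, so the proposal does not constitute a proof.

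The parts you do prove are correct and take a genuinely different route to the reduction than the paper. The paper forms $X=K\rtimes H$, shows $X$ is a $2$-transitive affine group with socle $K$, and invokes the CFSG-based classification of $2$-transitive affine groups to get $H\in\{\GL(n,2),\,C_m,\,C_m\rtimes C_n\}$; your route (Singer-cycle identification via the minimal polynomial, $N_{\GL(n,2)}(S)=\Gamma L(1,2^n)$, and Burnside's classical theorem that a transitive group of prime degree is $2$-transitive or has a normal Sylow $p$-subgroup) reaches the same trichotomy by pre-classification means, which is a genuine simplification of that step --- though be aware that in the even-order branch, identifying the $2$-transitive group as $\GL(n,2)$ (rather than merely ``some $2$-transitive subgroup'') would itself require a classification result, unless your direct no-involution argument is supplied. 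Either way, the hard case the paper confronts is the one you have deferred.
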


\begin{proof}
Lemma \ref{Mersenne} implies that $m$ and $n$ are actually prime numbers and $n\neq 3$. Clearly, $K$ is a minimal normal subgroup of $G$ since $K\cong C^n_2\unlhd G$ and all involutions of $G$ are conjugate. Moreover we have a faithful action by conjugation of 
$N_G(K)/C_G(K)=G/C_G(K)$ on $K$, and hence $G/C_G(K)$ is isomorphic to a subgroup $H$ of $\Aut(K) \cong \GL(n,2)$ and $H$ is irreducible because the nontrivial elements of $K$ are pairwise conjugate in $G$. 

We have $K\cong \mathbb{F}^n_2$, $n\neq 3$ prime, $m=2^n-1\neq 7$ prime. Since $G$ is an $m$-pyramidal group, all elements of $K\setminus \{0\}$ form the single conjugacy class of involutions in $G$ and $H$ acts transitively on $K\setminus\{0\}$. Set $$X:=K\rtimes H,$$ where $\rtimes$ is defined with respect to the action of $H\cong G/C_G(K)$. Since $H$ acts transitively on $K\setminus \{0\}$, we have that $X$ is a $2$-transitive group, by Lemma \ref{2tr}. Let $kh\in C_X(K)$ where $k\in K$, $h\in H$, then $v^{kh}=v^h=v$ for any involution $v$ of $K$. Since $H$ acts transitively on $K\setminus \{0\}$, $h=1$, it follows that $C_X(K)=K$. Obviously, $K$ is a minimal normal subgroup of $X$.
We claim that $K$ is the unique minimal normal subgroup of $X$. Indeed, suppose that $L\neq K$ is a minimal normal subgroup of $X$, then $[L, K]=\{1\}$, and hence $L< C_X(K)=K$, contradicting the minimality of $K$. This proves that $K$ is the unique minimal normal subgroup of $X$, i.e., $\soc(X)=K$. Therefore, $X$ is a $2$-transitive affine group of degree $2^n$.   
Consider \cite[Table 7.3]{PJC}, which is the classification of $2$-transitive affine groups. Since $n$ is prime, we are only concerned with the first line of the table, in which the degree $q^d$ can be interpreted in two different ways: $q=2^n$, $d=1$ or $q=2$, $d=n$. The second case gives $H= \GL(n, 2)$. The first case gives $$C_m \cong \GL(1, 2^n) \leqslant H \leqslant \Gamma L(1,2^n).$$ Since $n$ is prime, there are only three possibilities for $H$: namely $\GL(n, 2)$, $C_m$ and $C_m \rtimes C_n$ where, in the third case, the action of $C_n$ on $C_m$ is precisely the natural action of the Frobenius automorphism of order $n$, that is $\phi: x \mapsto x^2$, on the set $F^{\ast} = F \setminus \{0\}$ where $F$ is the finite field of size $2^n$. In other words $$C_m \rtimes C_n = F^{\ast} \rtimes \langle \phi \rangle.$$ Also, note that $\GL(n, 2)$ and $C_m \rtimes C_n$ are the same group if $n=2$.

We are left to prove that $G/C \not \cong \GL(n, 2)$ for $n>3$, where $C=C_G(K)$. We assume that $G$ is a group of minimal order with the following three properties: $G$ is $m$-pyramidal, the involutions of $G$ commute pairwise and $G/C \cong \GL(n, 2)$. Assume that there exists a maximal subgroup $M$ of $G$ such that $C \nleqslant M$, then $M<MC\leqslant G$, thus $MC=G$ by the maximality of $M$. Since 
$$\GL(n,2) \cong G/C = MC/C \cong M/(M\cap C)$$ we have that $|M|$ is even. Lemma \ref{properties}(1) implies that $M$ is $m$-pyramidal, that is $K\leqslant M$ and all involutions of $M$ are conjugate. Note that $$M/C_M(K)=M/(C\cap M)\cong G/C\cong \GL(n, 2)$$ and $|M|<|G|$, moreover the involutions of $M$ commute pairwise because they are involutions in $G$. This contradicts the minimality of $|G|$. Therefore $C$ is contained in every maximal subgroup of $G$, in other words $C \leqslant \Phi(G)$. Since $n>3$, $$G/C \cong \GL(n, 2) \cong \PSL(n,2)$$ is a simple group, so $C$ is a maximal normal subgroup of $G$ and hence $C=\Phi(G)$. 
We claim that $K$ is the unique minimal normal subgroup of $G$. Indeed, suppose that $L\neq K$ is a minimal normal subgroup of $G$, then $L$ does not contain $K$, so it does not contain involutions, implying that $|L|$ is odd. Note that $L\leqslant C$ because otherwise $CL=G$ by maximality of $C$ as a normal subgroup of $G$, and then $$\GL(n,2) \cong G/C \cong L/(L\cap C),$$ this contradicts the fact that $|L|$ is odd. In this case $\overline{K} \cong K$ is a minimal normal subgroup of $\overline{G}$ where, for $R \leqslant G$, we define $\overline{R} := RL/L$. The fact that $L \cap K = \{1\}$ easily implies that $C_{\overline{G}}(\overline{K}) = \overline{C}$. Moreover, $$\GL(n,2) \cong G/C \cong \overline{G}/\overline{C}.$$ By Lemma \ref{properties}(4), $\overline{G}$ is an $m$-pyramidal group in which the involutions commute pairwise. Since $L \neq \{1\}$, this contradicts the minimality of $|G|$. This proves that $K$ is the unique minimal normal subgroup of $G$, in particular $O(G)=\{1\}$.
This implies that $\Phi(G)=C$ is a $2$-group. Indeed, if it were not, then it would have a Sylow subgroup of odd order, which would be normal in $G$ because $\Phi(G)$ is nilpotent, contradicting $O(G)=\{1\}$.

We are in the following situation: $G/C \cong \GL(n, 2)$, $n>3$, $K = \langle x_1,\ldots,x_m \rangle \cong \mathbb{F}_2^n$ is the unique minimal normal subgroup of $G$ and $C=C_G(K)=\Phi(G)$ is a $2$-group. Let $\gamma:G/C \to \GL(n,2)$ be the canonical map, which by assumption is an isomorphism.
We follow Dempwolff \cite[page 2]{UD}. Let $\{v_1,\ldots,v_n\}$ be an $\mathbb{F}_2$-basis of $K$ and let $\tau\in G\setminus C$ act in the following way: $v^{\tau}_i=v_{i-1}+v_i$ for all $i\leqslant n-3$ even and $v^{\tau}_i=v_i$ for all other values of $i$. Observe that $\tau^2 \in C$ and, with respect to the given basis, we have
$$\gamma(\tau C)= \left( \begin{array}{cccccccc} 1 & 1 & & & & & & \\ 0 & 1 & & & & & & \\ & & \ddots & & & & & \\  & & & 1 & 1 & & & \\ & & & 0 & 1 & & & \\ & & & & & 1 & 0 & 0 \\ & & & & & 0 & 1 & 0 \\ & & & & & 0 & 0 & 1 \end{array} \right).$$

We can decompose $K$ as a direct sum 
$$K=U_1 \oplus U_2\oplus \ldots \oplus U_{(n-1)/2}$$ where $U_i = \langle v_{2i-1},v_{2i} \rangle$ for $i=1,\ldots,(n-3)/2$ and $U_{(n-1)/2} = \langle v_{n-2},v_{n-1},v_n \rangle$. Choose elements $\rho_1 C, \rho_2 C, \ldots, \rho_{(n-3)/2} C$ of order $3$ in $G/C$ such that, for $1 \leqslant i \leqslant (n-3)/2$, $$C_K(\rho_i) = \langle v_1, \ldots, v_{2i-3}, v_{2i-2}, v_{2i+1}, v_{2i+2}, \ldots, v_n\rangle$$
and $\gamma(\rho_i C)$ permutes transitively the nontrivial elements in $\langle v_{2i-1}, v_{2i}\rangle$, and choose $\rho_{(n-1)/2} C$ as an element of order $7$ in $G/C$ with $$C_K(\rho_{(n-1)/2})=\langle v_1, \ldots, v_{n-3}\rangle$$ and which acts irreducibly on $\langle v_{n-2}, v_{n-1}, v_n\rangle$. Note that, for $i=1,2, \ldots, (n-3)/2$ we have

\[
\gamma(\rho_i C) = \begin{pmatrix}
    \begin{matrix} 
       I_{2i-2}
    \end{matrix} & 
    \begin{matrix} 
         & \\
         & 
    \end{matrix} & 
    \begin{matrix} 
         &  \\
         & 
    \end{matrix} \\
    \begin{matrix} 
         &  \\
         & 
    \end{matrix} & 
    \begin{matrix} 
        A
    \end{matrix} & 
    \begin{matrix} 
         &  \\
         & 
    \end{matrix} \\
    \begin{matrix} 
         &  \\
         & 
    \end{matrix} & 
    \begin{matrix} 
         &  \\
         & 
    \end{matrix} & 
    \begin{matrix} 
        I_{n-2i}
    \end{matrix}
\end{pmatrix}
\]
and
\[
\gamma(\rho_{(n-1)/2} C) =\begin{pmatrix}
    \begin{matrix} 
       I_{n-3}
    \end{matrix} & 
    \begin{matrix} 
         & \\
         & 
    \end{matrix} & \\
    \begin{matrix} 
         &  \\
         & 
    \end{matrix} & 
    \begin{matrix} 
        B
    \end{matrix} 
\end{pmatrix}
\]
where $A = \left( \begin{array}{cc} 0 & 1 \\ 1 & 1 \end{array} \right)$ and $B = \left( \begin{array}{ccc} 0 & 0 & 1 \\ 1 & 0 & 0 \\ 0 & 1 & 1 \end{array} \right)$. The element $A$ is an element of $\GL(2,2)$ of order $3$ acting transitively on the nonzero vectors of $\mathbb{F}_2^2$ and $B$ is an element of $\GL(3,2)$ of order $7$ acting transitively on the nonzero vectors of $\mathbb{F}_2^3$. It is easy to check that $\rho_1C, \rho_2C, \ldots, \rho_{(n-1)/2}C$ pairwise commute with each other. We construct the element $$xC=\rho_1\rho_2\dots\rho_{(n-1)/2}C.$$ 
Note that $(\rho_i C)^{\tau C} = \rho_i^2 C$ for $i=1,2,3, \ldots, (n-3)/2$ and $(\rho_{(n-1)/2} C)^{\tau C} = \rho_{(n-1)/2} C$, hence $(x C)^{\tau C} = x^8 C$, and then $x^{\tau}\in x^8C$, thus there exists $c\in C$ such that $x^{\tau}=x^8c$, this implies that $x^{\tau}\in \langle C, x\rangle$. Therefore, the element $\tau$ normalizes $\langle x, C\rangle$ (since $C$ is normal in $G$).
Obviously, $o(xC)=21$ and $o(x) = 2^u \cdot 21$ for some $u$. Up to replacing $x$ with $x^{2^u}$, we may assume that $o(x)=21$. Moreover, $C_K(x)=\{1\}$ and $$\gamma(x C)= \left( \begin{array}{cccccccc} 0 & 1 & & & & & & \\ 1 & 1 & & & & & & \\ & & \ddots & & & & & \\  & & & 0 & 1 & & & \\ & & & 1 & 1 & & & \\ & & & & & 0 & 0 & 1 \\ & & & & & 1 & 0 & 0 \\ & & & & & 0 & 1 & 1 \end{array} \right).$$

\medskip

Now let $H := \langle C,x,\tau \rangle \leqslant G$, $S=\langle x \rangle$, $J := N_H(S)$. Since $\tau$ normalizes $\langle C,x \rangle$, we have $\langle C,x\rangle$ is normal in $H$ and $H/\langle C,x\rangle$ is a $2$-group. Moreover, the order of the quotient group $\langle C,x\rangle/C$ is $21$, it follows that $\langle C,x\rangle$ is solvable since both $C$ and $\langle C,x\rangle/C$ are solvable. Therefore, $H=\langle C,x \rangle \langle \tau\rangle$ is solvable.
Since $C$ is a $2$-group, $S$ is a Hall subgroup of the solvable group $H$, therefore if $h \in H$ then $S^h$ is a Hall subgroup of $\langle C, x\rangle$. This is because $\langle C, x\rangle$ is normal in $H$, and $S$ is contained in $\langle C, x\rangle$. Since $\langle C, x\rangle$ is solvable, there exists $y\in \langle C,x\rangle$ such that $S^h=S^y$, so $S^{hy^{-1}}=S$. Thus $hy^{-1}\in N_H(S)=J$, so that $$H=J\langle C,x\rangle=CJ.$$ 
Note that $\tau \in H = CJ$, let $\tau=c\theta\in CJ$ where $c\in C$ and $\theta\in J$. Since $\tau\in G\setminus C$, we have $1\neq \theta=c^{-1}\tau\in \tau C$. Therefore $\theta C=\tau C$, and we may change the definition of $\tau$ in the beginning of the argument by setting it to be equal to $\theta$. There is no harm in doing this, because all that was used until now has to do only with the action of $\tau$ on $K$, so we have the freedom to change the representative in $\tau C$. Thus we can assume that $\tau \in J$. Since the involutions of $G$ belong to $K$ and $\tau$ does not centralize $K$, $\tau$ is not an involution. Since $C$ is a $2$-group, $\tau$ has order a power of $2$, say $o(\tau)=2^k$ with $k \geqslant 2$, and hence $t := \tau^{2^{k-1}}$ has order $2$ so it belongs to $K$. Therefore $t \in K \cap J$. Since $t$ normalizes $S$, we can write $x^t = x^r$ for some natural number $r$. Since $t \in K$,
$$x^{r-1} = x^r \cdot x^{-1} = x^t \cdot x^{-1} = t^{-1} x t x^{-1}=t^{-1}(xtx^{-1}) \in K$$
and since $x$ has odd order this implies that $x^{r-1}=1$, hence $x^t=x^r=x$, so $t \in C_K(x) = \{1\}$, which implies that $t=1$, a contradiction.
\end{proof}

\section{Almost-simple groups} \label{section_almostsimple}

A group $L$ is called almost-simple if there exists a simple normal subgroup $S$ of $L$ such that $C_L(S)=\{1\}$. In particular $S$ is nonabelian. Under these assumptions, the canonical map $L \to \Aut(S)$ is injective and we may identify $L$ with a subgroup of $\Aut(S)$ containing the image of the canonical map $S \to \Aut(S)$, which is isomorphic to $S$. In particular, the socle of $L$ (the subgroup of $L$ generated by its minimal normal subgroups) is equal to $S$, $\soc(L)=S$. A subgroup $B$ of an almost-simple group $L$ is called faithful if $\soc(L)$ is not contained in $B$, it is called unfaithful if $\soc(L)$ is contained in $B$. The almost-simple group $L$ is called minimal almost-simple if every faithful subgroup of $L$ is solvable. In other words, $L$ is minimal almost-simple if and only if every subgroup of $L$ not containing $\soc(L)$ is solvable. So for example the minimal nonabelian simple groups (i.e. the nonabelian simple groups whose proper subgroups are all solvable) are minimal almost-simple.

Recall that Thompson proved that the minimal simple groups are the following.

\begin{enumerate}
\item $\PSL(2,2^r)$, $r$ any prime.
\item $\PSL(2,3^r)$, $r$ any odd prime.
\item $\PSL(2,p)$, $p>3$ a prime satisfying $p^2+1 \equiv 0 \mod 5$.
\item $\mbox{Sz}(2^r)$, $r$ any odd prime.
\item $\PSL(3,3)$.
\end{enumerate}

A nonabelian simple group $T$ is called ``new'' if it is not minimal simple but it is the socle of a minimal almost-simple group.

\begin{thm}[Theorem 1.20 of \cite{DanLevy}]\label{minimalalmostsimple} 
Let $L$ be an almost-simple group. Then $L$ is minimal almost-simple if and only if its socle $T$ is isomorphic to one of the groups on the following list subject to the indicated conditions.
\begin{enumerate}
\item $\PSL(2,2^r)$, $r$ any prime. $T$ is minimal simple, $\Out(T)=\langle \phi\ |\ o(\phi)=r\rangle$.
\item $\PSL(2,3^r)$, $r$ any odd prime. $T$ is minimal simple, $\Out(T)=\langle \delta,\phi\ |\ o(\delta)=2,\ o(\phi)=r,\ [\delta,\phi]=1\rangle$.
\item $\PSL(2,p)$, $p>3$ a prime satisfying $p^2+1 \equiv 0 \mod 5$. $T$ is minimal simple, $\Out(T)=\langle \delta\ |\ o(\delta)=2\rangle$.
\item $\PSL(2,p)$, $p \geqslant 11$ a prime satisfying $p \equiv \pm 1 \mod 10$ and $L=\Aut(T)$. $T$ is new, $\Out(T)=\langle \delta\ |\ o(\delta)=2 \rangle$.
\item $\PSL(2,p^{2^m})$, $p \geqslant 3$ a prime, $m$ a positive integer and $L/\mbox{Inn}(T) \not \leqslant \langle \phi \rangle$. $T$ is new, $\Out(T)=\langle \delta,\phi\ |\ o(\delta)=2,\ o(\phi)=2^m,\ [\delta,\phi]=1\rangle$.
\item $\mbox{Sz}(2^r)$, $r$ any odd prime. $T$ is minimal simple, $\Out(T)=\langle \phi\ |\ o(\phi)=r\rangle$.
\item $\PSL(3,3)$. $T$ is minimal simple, $\Out(T)=\langle \gamma\ |\ o(\gamma)=2\rangle$.
\end{enumerate}
\end{thm}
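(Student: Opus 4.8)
The plan is to derive this classification from the Classification of Finite Simple Groups (CFSG) together with Thompson's determination of the minimal simple groups, proving the two directions of the equivalence separately. The basic simplifying device is Schreier's conjecture (a consequence of CFSG): $\Out(T)$ is solvable, so $L/T$ is solvable for every almost-simple $L$ with socle $T$. Hence a subgroup $B \leqslant L$ is solvable if and only if $B \cap T$ is solvable, because $B/(B \cap T)$ embeds into $L/T$. This lets me reduce the whole question to controlling which subgroups of $T$ appear as $B \cap T$ for a faithful $B$, and in practice to examining the \emph{maximal} subgroups of $L$ that do not contain $T$: the operative characterization is that $L$ is minimal almost-simple precisely when every maximal subgroup of $L$ is either solvable or contains $\soc(L)=T$. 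With this reduction in hand, the easy direction is immediate for the minimal simple socles, i.e. cases (1), (2), (3), (6), (7), which are exactly Thompson's list: if $T$ is minimal simple then every proper subgroup of $T$ is solvable, so $B \cap T < T$ is solvable for every faithful $B$, and therefore \emph{every} $L$ with $T \leqslant L \leqslant \Aut(T)$ is minimal almost-simple. This accounts for the full outer automorphism groups recorded in those cases.

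The substance of the theorem is the converse together with the two ``new'' families (4) and (5). Here I would run through CFSG: assuming $L$ is minimal almost-simple with socle $T$, I want to force $T$ into one of the seven types. If $T$ is minimal simple we are done, so suppose $T$ has a nonsolvable proper subgroup; then $T$ has a nonsolvable maximal subgroup $M$. The minimal-almost-simple condition now requires that no $L$-conjugate and no outer twist of $M$ be maximal in $L$ while avoiding $T$, i.e. each must be absorbed into a (necessarily socle-containing) larger subgroup. Quantifying this — weighing the number of $\Aut(T)$-classes of nonsolvable maximal subgroups against the structure and size of $\Out(T)$, using the maximal-subgroup data attached to the simple groups by CFSG — is very restrictive and eliminates all $T$ except the rank-one families $\PSL(2,q)$ and $\Sz(2^r)$ and the small exception $\PSL(3,3)$.

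The heart of the argument, and the main obstacle, is the detailed analysis of $T = \PSL(2,q)$ through Dickson's classification of its subgroups: the only nonsolvable proper subgroups are $A_5 \cong \PSL(2,5)$ and the subfield subgroups $\PSL(2,q_0)$ and $\PGL(2,q_0)$ with $q_0^d = q$. One must pin down exactly when the diagonal automorphism $\delta$ and the field automorphism $\phi$ fuse or normalize the relevant conjugacy classes so that no nonsolvable maximal subgroup of $L$ avoids $T$. This is what isolates $p \equiv \pm 1 \pmod{10}$ with $p \geqslant 11$ and $L = \Aut(T)$ in case (4), where the two $T$-classes of $A_5$ are interchanged by $\delta$ (so a single $A_5$ lies only inside $T$ itself), and the quadratic tower $q = p^{2^m}$ with $L/\mathrm{Inn}(T) \not\leqslant \langle \phi \rangle$ in case (5), where the subfield subgroups $\PSL(2,p^{2^{m-1}})$ must be absorbed by a suitable non-field outer element. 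The Suzuki case $\Sz(2^r)$ with $r$ prime follows from Suzuki's description of the maximal subgroups, all of which outside the socle-containing ones are Frobenius and solvable, and $\PSL(3,3)$ is settled directly from its known maximal subgroups. Throughout, Zsigmondy-type arithmetic (as in Lemma \ref{Mersenne}) governs when the subfield and torus normalizers are solvable and thereby pins down the congruence conditions appearing in the statement.
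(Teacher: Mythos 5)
First, a structural point: the paper itself contains \emph{no proof} of this statement. It is quoted, with attribution, as Theorem 1.20 of \cite{DanLevy} and is used as a black box (in Lemma \ref{G_0=S} and in Step 4 of the proof of Theorem \ref{pripyra}), so there is no internal argument to compare yours with; your sketch has to stand on its own as a re-proof of Levy's classification. Judged that way, it has a genuine gap at its foundation. Your ``operative characterization'' --- that $L$ is minimal almost-simple precisely when every maximal subgroup of $L$ is solvable or contains $\soc(L)=T$ --- is not equivalent to the definition the paper gives (every subgroup not containing $\soc(L)$ is solvable); only one implication holds, because a faithful nonsolvable subgroup can be swallowed by an \emph{unfaithful} maximal subgroup. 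Concretely, every maximal subgroup of $\PGL(2,11)$ is either solvable ($11{:}10$, $D_{20}$, $D_{24}$, $S_4$) or equal to the socle, and yet $A_5\leqslant\PSL(2,11)$ is a faithful nonsolvable subgroup (any $A_5$ is perfect, hence lies in $\PGL(2,11)'=\PSL(2,11)$). This is not a technicality: under the paper's literal definition the statement you are proving would be \emph{false}, since whenever $T$ is not minimal simple, a nonsolvable proper subgroup of $T$ is already a faithful nonsolvable subgroup of every $L$ with socle $T$, so the ``new'' cases (4) and (5) could never occur. The theorem is true only under the maximal-subgroup reading (which is also the form in which the paper actually applies it, e.g.\ to $G_1$ in Step 4), and your argument tacitly switches to that reading while asserting the two formulations agree; that asserted equivalence is a step that fails.

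Second, the heart of the classification is asserted rather than proved. The sentence in which you ``run through CFSG'' and conclude that the absorption condition ``eliminates all $T$ except the rank-one families $\PSL(2,q)$ and $\Sz(2^r)$ and the small exception $\PSL(3,3)$'' \emph{is} the content of Levy's theorem: it requires traversing the alternating, sporadic, classical and exceptional groups and exhibiting, for each excluded socle $T$ and every admissible $L$ above it, a nonsolvable maximal subgroup of $L$ avoiding $T$. Nothing in the proposal indicates how this is done, and it is not a routine verification. The same holds, at smaller scale, for the ``new'' families: your account of case (4) is plausible, but case (5) requires pinning down the fusion and novelty behaviour of the subfield subgroups $\PSL(2,q_0)$ and $\PGL(2,q_0)$ with $q=q_0^2$ under diagonal and field automorphisms, and why exactly $L/\mbox{Inn}(T)\not\leqslant\langle\phi\rangle$ is the right condition; this is gestured at, not established. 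Your ingredients (the Schreier reduction, Thompson's list for the easy direction, Dickson's theorem for $\PSL(2,q)$) are the correct ones, but what you have is an outline of where a proof would live, with its two hardest steps missing --- and the first of them cannot be repaired without first correcting the definition it rests on.
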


If $L$ is any almost-simple group, then $L$ is primitive since it admits a maximal subgroup $M$ not containing $S=\soc(L)$, hence $M_L=\{1\}$. This is because, if $S$ was contained in all the maximal subgroups of $L$, then $S$ would be contained in the Frattini subgroup of $L$, which is nilpotent since $L$ is finite. See also \cite{BE}.

\begin{lemma}\label{G_0=S}
    Let $L$ be a minimal almost-simple group with socle $S$. Assume that $L/S$ is a cyclic $m$-group where $m$ is an odd prime number. Let $R$ be a maximal subgroup of $L$ such that $RS = L$. Then $R \cap S$ is a maximal subgroup of $S$.
\end{lemma}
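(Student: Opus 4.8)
\emph{The plan is to combine a general modular-law reduction with the explicit structure supplied by Theorem \ref{minimalalmostsimple}.} Write $R_0 := R \cap S$. Since $RS = L$ and $R$ is a proper (maximal) subgroup, $S \not\leqslant R$, so $R$ is a faithful subgroup of the minimal almost-simple group $L$ and is therefore solvable; moreover the second isomorphism theorem gives $R/R_0 \cong RS/S = L/S$, a nontrivial cyclic $m$-group. Fix $g \in R$ whose image $gS$ generates $L/S$; then $g$ normalizes $R_0$ (as $R_0 \trianglelefteq R$) and $R = R_0\langle g\rangle$.

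First I would record the consequence of maximality. For any subgroup $X$ with $R \leqslant X \leqslant L$, Dedekind's modular law (using $R \leqslant X$ and $RS = L$) gives $X = R(X \cap S)$, so $X \mapsto X \cap S$ embeds the interval $[R,L]$ into $[R_0,S]$ as a lattice. Suppose, for contradiction, that $R_0$ is not maximal in $S$, and pick $M$ with $R_0 < M < S$. Consider the $\langle g\rangle$-core $M^{\ast} := \bigcap_{i} M^{g^i}$, which is $g$-invariant and satisfies $R_0 \leqslant M^{\ast} \leqslant M < S$ (the first inclusion because $R_0 = R_0^{g^i} \leqslant M^{g^i}$ for all $i$). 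If $M^{\ast} > R_0$, then $M^{\ast}\langle g\rangle$ is a subgroup (as $M^{\ast}$ is $g$-invariant), it meets $S$ in $M^{\ast}$ (because $\langle g\rangle \cap S \leqslant R \cap S = R_0 \leqslant M^{\ast}$), and therefore sits strictly between $R$ and $L$, contradicting the maximality of $R$. Hence $M^{\ast} = R_0$, so the $m$ conjugates $M, M^g,\dots,M^{g^{m-1}}$ are pairwise distinct (their $\langle g\rangle$-orbit has prime length $m$) and meet in $R_0$: this is a ``novelty'' configuration. Moreover $N_L(R_0) = R$, since $R \leqslant N_L(R_0)$ and $N_L(R_0) = L$ would force $R_0 \trianglelefteq L$, hence $R_0 \in \{1,S\}$; the case $R_0 = S$ is absurd, and $R_0 = 1$ is excluded because then $R = \langle g\rangle$ is a cyclic complement to $S$ that cannot be maximal, as $C_S(g)\langle g\rangle$ lies strictly between $R$ and $L$ (here $C_S(g)$ is a nontrivial proper subfield subgroup, by the analysis below).

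Next I would invoke Theorem \ref{minimalalmostsimple} to pin down $S$. Since $L/S$ is a nontrivial cyclic group of odd prime-power order, inspecting the outer automorphism groups shows that only cases (1), (2) and (6) can occur, and in each the odd part of $\Out(T)$ is cyclic of prime order $r$; thus $m = r$, $L/S \cong C_m$ is generated by a field automorphism of prime order $m$, and $S$ is one of $\PSL(2,2^m)$, $\PSL(2,3^m)$ or $\Sz(2^m)$. In particular $|L:S| = m$, and $g$ induces a field automorphism of $S$ of order $m$ whose fixed-point subgroup $C_S(g)$ is a nontrivial proper subfield subgroup --- which justifies the exclusion of $R_0 = 1$ above.

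It then remains to rule out the novelty configuration in these three families, and \emph{this is the main obstacle}. Here I would use the explicit determination of the maximal subgroups of $\PSL(2,q)$ (Dickson) and $\Sz(q)$ (Suzuki) together with the action of a prime-order field automorphism, checking directly that no collection of $m$ distinct field-conjugate maximal subgroups of $S$ can intersect in a subgroup $R_0$ whose normalizer $R = R_0\langle g\rangle$ is maximal in $L$; equivalently, that every maximal subgroup of $L$ supplementing $S$ already meets $S$ in a maximal subgroup. Since $S$ has rank $1$ and its maximal subgroups (Borel subgroups, torus normalizers, and the subfield subgroup $C_S(g)$) are few and of controlled structure, this is a finite verification per family, and its completion yields that $R_0 = R \cap S$ is maximal in $S$.
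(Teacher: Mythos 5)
Your reduction is sound as far as it goes: the solvability of $R$, the modular-law embedding of the interval $[R,L]$ into $[R_0,S]$, the conclusion that every intermediate $R_0 < M < S$ has $\langle g\rangle$-core equal to $R_0$, the identity $N_L(R_0)=R$, and the use of Theorem \ref{minimalalmostsimple} to force $S \in \{\PSL(2,2^m), \PSL(2,3^m), \Sz(2^m)\}$ with $L/S \cong C_m$ all check out (and the last reduction is exactly the one the paper makes). The problem is that you stop precisely where the lemma's actual content begins: ruling out the ``novelty configuration'' in those three families is not a routine afterthought but the whole point, and your proposal only announces a plan (``a finite verification per family'') without executing it. Novelty maximal subgroups do occur for almost simple groups with socle $\PSL(2,q)$ (e.g.\ in degree-$2$ extensions), so the claim that none occur for the prime-degree field extensions at hand genuinely requires proof or a citation. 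The paper closes this step by quoting the classification of primitive groups with solvable point stabilizers: it takes $Y$ minimal among normal subgroups of $L$ containing $S$ with $R\cap Y$ maximal in $Y$, observes that $(Y, Y\cap R)$ must appear in \cite[Tables 14--20]{LZ}, and reads off from \cite[Tables 16, 20]{LZ} that $Y=S$. To complete your argument you would either have to carry out the Dickson/Suzuki case analysis in full, or cite such a classification (the tables in \cite{BHR}, which flag novelties, would also do); as written, the proof is incomplete.

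Two smaller points. First, you assert that the $\langle g\rangle$-orbit of $M$ has prime length $m$ before you have established $|L/S|=m$; the correct order is to first pin down $L/S \cong C_m$ via Theorem \ref{minimalalmostsimple}, after which $g^{m} \in R_0 \leqslant M$ normalizes $M$ and the orbit argument works. Second, your exclusion of $R_0=1$ leans on the claim that $C_S(g)$ is a nontrivial subfield subgroup, but $g$ is only a preimage of a generator of $L/S$: it need not have order $m$, need not be conjugate to a standard field automorphism, and centralizers of arbitrary elements of the coset $S\phi$ vary. It is cleaner (and sufficient) to note that $R=\langle g\rangle$ would be an abelian maximal subgroup of the nonsolvable group $L$, contradicting Herstein's theorem, which the paper invokes elsewhere for exactly this purpose. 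These two items are fixable; the missing novelty analysis is the substantive gap.
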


\begin{proof}
Let $Y$ be a normal subgroup of $L$ containing $S$, assume that $R\cap Y$ is a maximal subgroup of $Y$ and that $Y$ is of minimal order with these properties. This makes sense because there is at least one group satisfying these properties, namely $L$. We aim to prove that $Y=S$. Both $Y/S$ and $L/S$ are (possibly trivial) cyclic $m$-groups. Moreover, $L$ can be seen as a primitive group with solvable point stabilizer $R$, hence the pair $(Y, Y\cap R)$ is listed in \cite[Tables 14-20]{LZ} with the notation $(G_0,H_0)$. Since $m$ divides $|\Out(S)|$, with the help of Theorem \ref{minimalalmostsimple} we deduce that $S\cong \PSL(2,2^m)$, $\PSL(2,3^m)$ or $Sz(2^m)$. Finally, by \cite[Tables 16, 20]{LZ}, we deduce that $Y=S$.
\end{proof}

\section{Case $m$ prime} \label{section_mprime}

In this section we will prove that if $m\neq 7$ is prime then all $m$-pyramidal groups are solvable. The proof we propose generalizes the approach taken in \cite{GG}. As usual, $K$ denotes the subgroup of $G$ generated by its involutions.

\begin{lemma}\label{ndividesa}
    Let $G$ be an $m$-pyramidal group of order $2^a \cdot d$ where $m=2^n-1 \neq 7$ is prime and $d$ is odd. If $K\cong C_2^n$, then $n$ divides $a$. 
\end{lemma}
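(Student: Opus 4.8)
The plan is to reduce the statement to a count of the $2$-part of $C := C_G(K)$ and then to run a fixed-point-free coprime-action argument. First note that, since every involution of $G$ lies in $K$, every involution of any $2$-subgroup of $G$ lies in $K$; in particular $K$ is the subgroup generated by the involutions of any Sylow $2$-subgroup $S$ of $G$, and of any Sylow $2$-subgroup $P$ of $C$, so $K$ is characteristic in each of them. By Lemma \ref{el2gp} (and Lemma \ref{Mersenne}, which forces $n$ to be prime and $n\neq 3$) we have $G/C\cong C_m$ or $G/C\cong C_m\rtimes C_n$, where the normal subgroup $F^\ast\cong C_m$ acts on $K\cong \mathbb{F}_{2^n}$ as the full multiplicative group of the field. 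The desired conclusion $n\mid a$ will follow from two facts: (i) $n\mid v_2(|C|)$, and (ii) $v_2(|G|)=v_2(|C|)$, that is, $G/C$ has odd order.

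For (i), let $P$ be a Sylow $2$-subgroup of $C$. By the Frattini argument $G=C\,N_G(P)$, so $N_G(P)$ maps onto $G/C$; since $m\mid |G/C|$, I pick an element of $N_G(P)$ whose image generates the normal subgroup $F^\ast\cong C_m$ and replace it by its $m$-part, obtaining an $m$-element $w\in N_G(P)$ whose image in $G/C$ generates $F^\ast$. Then $w$ normalizes $P$ and acts on $K$ as a non-identity field multiplication, hence with no nonzero fixed point. It follows that $C_P(w)=1$: otherwise $C_P(w)$ would be a nontrivial $2$-group containing an involution, which would lie in $K$ and be fixed by $w$, a contradiction. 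Now $\langle w\rangle$ is an $m$-group acting coprimely on the $2$-group $P$, so in a $w$-invariant chief series of $P$ each factor $V$ is elementary abelian and satisfies $C_V(w)=1$; thus $V$ is a nontrivial irreducible $\mathbb{F}_2[\langle w\rangle]$-module. The key number-theoretic input is that $m=2^n-1$ prime gives $\mathrm{ord}_m(2)=n$: the cyclic group induced by $\langle w\rangle$ on $V$ has order $m^i$ for some $i\geq 1$, and $\dim_{\mathbb{F}_2}V=\mathrm{ord}_{m^i}(2)$ is a multiple of $\mathrm{ord}_m(2)=n$. Summing over the factors yields $n\mid \dim_{\mathbb{F}_2}P=v_2(|C|)$.

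For (ii) I split according to the parity of $n$. If $n$ is odd then $m$ and $n$ are both odd, so $|G/C|\in\{m,mn\}$ is odd, $P$ is already a Sylow $2$-subgroup of $G$, and $a=v_2(|C|)$ is divisible by $n$. The delicate case, and the main obstacle, is $n=2$ (so $m=3$), because $\GL(2,2)=C_3\rtimes C_2=S_3$ is one of the groups permitted by Lemma \ref{el2gp}; if $G/C\cong S_3$ then $a=1+v_2(|C|)$ would be odd. One must therefore rule out $G/C\cong S_3$ when $K\cong C_2^2$ carries all involutions of $G$. In that situation a Sylow $2$-subgroup $S$ of $G$ has an index-$2$ subgroup $P=S\cap C$ containing every involution of $S$, so $S\setminus P$ consists of elements of order at least $4$, and the $\tau$-fixed involution is central in $S$ while being $G$-fused to the other two. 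The plan is to show that this configuration forces an involution in $S\setminus P$ — for instance via a Gaschütz/cohomological splitting argument for the abelian normal subgroup $K$ inside the preimage of a Sylow $2$-subgroup of $S_3$, or via a transfer/fusion argument on the central involution — contradicting that all involutions lie in $K\leq C$. Hence $G/C\cong C_3$ has odd order and $2\mid a$, completing the proof.
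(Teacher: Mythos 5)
Your proof splits the statement into (i) $n \mid v_2(|C|)$ and (ii) $|G/C|$ odd. Part (i) is correct, and it is the paper's own argument in only slightly different clothing: the paper applies Lemma \ref{properties}(2) (the normalizer of a Sylow $2$-subgroup $Q$ of $C$ is again $m$-pyramidal) together with Lemma \ref{properties}(5) (a normal $2$-subgroup of an $m$-pyramidal group, $m$ prime, has order $\equiv 1 \bmod m$) to get $2^a \equiv 1 \pmod{m}$, and then uses $\mathrm{ord}_m(2)=n$; your Frattini-plus-coprime-action computation re-proves exactly this (your ``$\dim_{\mathbb{F}_2}P$'' should be $\log_2|P|$, a harmless abuse, and you should note that $P$ acts trivially on its own chief factors so that each factor really is an $\mathbb{F}_2[\langle w\rangle]$-module, but these are details). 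Part (ii) for $n$ odd, i.e. $m>3$, is also correct and is precisely what the paper does: $G/C\cong C_m$ or $C_m\rtimes C_n$ has odd order, so a Sylow $2$-subgroup of $C$ is a Sylow $2$-subgroup of $G$.

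The genuine gap is the case $n=2$, $m=3$, where Lemma \ref{el2gp} permits $G/C\cong C_m\rtimes C_n=\GL(2,2)\cong S_3$. You correctly identify that this must be excluded, but you do not exclude it: your text says ``the plan is to show that this configuration forces an involution in $S\setminus P$,'' naming Gasch\"utz or transfer/fusion as candidate tools, and then asserts the conclusion. None of these tools applies off the shelf. Thompson transfer is vacuous (there is no involution in $S\setminus P$ to feed into it), Glauberman's $Z^*$-theorem does not apply (the central involution $x$ is not isolated, since $y,z\in S$), and the extension $1\to C\to G\to S_3\to 1$ need not split, so Gasch\"utz gives nothing directly; indeed there do exist $2$-groups $S=P\langle\tau\rangle$ (e.g. the modular group of order $16$) with exactly three involutions, all in $K\leqslant P$, and $\tau$ inducing a transvection on $K$, so the contradiction cannot be purely local. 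What actually has to be proved is that no finite group has exactly three involutions, all conjugate, spanning a normal four-group that is not centralized by a full Sylow $2$-subgroup; a workable route (Burnside fusion inside the index-$2$ preimage $H$ of $A_3$ to get a $3$-element of $N_G(P)$ acting fixed-point-freely on $P$, then Higman's theorem forcing $P$ homocyclic or a Suzuki $2$-group, then a case analysis of the extensions $P\langle\tau\rangle$) is a substantial argument, not a remark. Tellingly, the paper itself does not prove this internally either: it settles $m=3$ by citing \cite[Theorem 4.6]{SMG}. So your proposal is a complete proof only for $m>3$; for $m=3$ it is a conjecture plus a list of techniques, which is a real missing step.
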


\begin{proof}
 If $m=3$ then $n=2$ and $a \geqslant 2$, so the claim follows from \cite[Theorem 4.6]{SMG}. Now assume that $m > 3$, so that $n > 2$. Note that $m=2^n-1$ is a prime number, so $n$ is also a prime number by Lemma \ref{Mersenne}. Let $C:=C_G(K)$. Since $K\cong C_2^n$, we have $G/C\cong C_m$ or $C_m\rtimes C_n$ by Lemma \ref{el2gp}. Since $n$ is an odd prime, $G/C$ is a group of odd order. Let $Q$ be a Sylow $2$-subgroup of $C$, then $Q$ is a Sylow $2$-subgroup of $G$, so $|Q|=2^a$. By Lemma \ref{properties}(2), $N_G(Q)$ is an $m$-pyramidal group. Since $Q$ is normal in $N_G(Q)$, Lemma \ref{properties}(5) implies that $2^a = |Q| \equiv 1 \mod m$. Since the prime number $n$ is the order of $2$ in the multiplicative group $(\mathbb{Z}/m \mathbb{Z})^{\ast}$, we deduce that $n$ divides $a$.
\end{proof}

In the following proof, we will use several times the Feit-Thompson theorem, that says that any finite group of odd order is solvable. Equivalently, any nonabelian simple group has even order. We also use Burnside's Theorem \cite[Theorem 31.3]{JL}, which is the following.

\begin{thm}[Burnside] \label{Burnside}
Let $p$ be a prime number and let $r$ be an integer with $r \geqslant 1$. Suppose that $G$ is a finite group with a conjugacy class of size $p^r$. Then $G$ is not simple.
\end{thm}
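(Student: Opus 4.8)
The statement is the classical theorem of Burnside on conjugacy classes of prime-power size, and the plan is to prove it by character theory over $\mathbb{C}$. First I would carry out the trivial reductions. Arguing by contradiction, suppose $G$ is simple and possesses an element $g$ whose conjugacy class $g^G = \{g^y : y \in G\}$ has size $p^r$ with $r \geqslant 1$. Then $g \neq 1$, since $|g^G| \geqslant p \geqslant 2$, and $G$ cannot be abelian, because an abelian simple group is cyclic of prime order and has only singleton classes. Hence $G = G'$, so $G$ admits no nontrivial linear characters, and every nontrivial $\chi \in \operatorname{Irr}(G)$ is \emph{faithful}: its kernel $\ker\chi$ is a proper normal subgroup of the simple group $G$, hence trivial.

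The heart of the matter is a character-theoretic lemma: if $\chi \in \operatorname{Irr}(G)$ and $\gcd(\chi(1), |g^G|) = 1$, then either $\chi(g) = 0$, or $g$ acts as a scalar in a representation affording $\chi$ (equivalently $|\chi(g)| = \chi(1)$). The plan is to prove this using the standard fact that $\omega := |g^G|\,\chi(g)/\chi(1)$ is an algebraic integer, being the scalar by which the class sum $\sum_{x \in g^G} x$ acts on the irreducible module. A B\'ezout relation $a\,\chi(1) + b\,|g^G| = 1$ then yields $\chi(g)/\chi(1) = a\,\chi(g) + b\,\omega$, again an algebraic integer. Since $\chi(g)$ is a sum of $\chi(1)$ roots of unity, $|\chi(g)/\chi(1)| \leqslant 1$, and the same bound holds for every Galois conjugate of $\chi(g)/\chi(1)$; hence the norm of this algebraic integer is a rational integer of absolute value $\leqslant 1$, forcing it to be $0$ (so $\chi(g) = 0$) or to have all conjugates of modulus $1$ (so $|\chi(g)| = \chi(1)$, and equality in the triangle inequality forces $g$ to act as a scalar).

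With the lemma in hand I would apply it to our $g$ and to each nontrivial $\chi$ with $p \nmid \chi(1)$, for which $\gcd(\chi(1), p^r) = 1$. The scalar alternative is excluded: if $g$ acted as a scalar in the faithful representation affording $\chi$, then $g$ would be central in $G$, whereas $Z(G) = 1$ because $G$ is nonabelian simple, contradicting $g \neq 1$. Therefore $\chi(g) = 0$ for every nontrivial $\chi$ with $p \nmid \chi(1)$. Finally I would invoke the second orthogonality relation at the nonidentity element $g$,
$$\sum_{\chi \in \operatorname{Irr}(G)} \chi(1)\,\chi(g) = 0,$$
isolate the trivial character (which contributes $1$), and drop all nontrivial terms with $p \nmid \chi(1)$, which vanish. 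Writing $\chi(1) = p\,m_{\chi}$ for the surviving terms, this becomes $1 + p\beta = 0$ with $\beta = \sum m_{\chi}\,\chi(g)$ an algebraic integer, so $\beta = -1/p$ is a non-integral rational number that is nonetheless an algebraic integer --- an impossibility. This contradiction proves $G$ is not simple.

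The main obstacle is the lemma, and specifically the two facts feeding into it: that $\omega$ is an algebraic integer (which rests on the centrality of class sums in the group algebra and their integral multiplication), and that every Galois conjugate of $\chi(g)/\chi(1)$ obeys the same bound $|\cdot| \leqslant 1$. Once these are secured, the remainder is bookkeeping with the orthogonality relation and the elementary fact that a rational algebraic integer is an ordinary integer.
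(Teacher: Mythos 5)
Your proof is correct: it is the classical character-theoretic argument for Burnside's theorem (the algebraic-integer lemma forcing $\chi(g)=0$ or scalar action when $\gcd(\chi(1),|g^G|)=1$, followed by column orthogonality at $g$ to produce the impossible $-1/p$). The paper does not prove this statement at all --- it imports it as a known result, citing \cite[Theorem 31.3]{JL} --- and your proposal essentially reconstructs the standard proof given in that cited reference, so there is nothing in the paper to diverge from.
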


The rest of this section is devoted to the proof of the following Theorem.

\begin{thm}\label{pripyra}
Let $m\neq 7$ be a prime number and let $G$ be an $m$-pyramidal group. Then $G$ is solvable.   
\end{thm}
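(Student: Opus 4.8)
The plan is to assume, for contradiction, that some non-solvable $m$-pyramidal group exists and to take $G$ of least order among them; throughout I use the Feit--Thompson theorem, so that any group, or normal subgroup, of odd order is solvable. Write $K$ for the subgroup generated by the involutions and $C=C_G(K)$. I would first dispose of the case in which no two distinct involutions commute: then $K$ is generated by pairwise non-commuting involutions, so Proposition \ref{dihedral} gives $K\cong D_{2m}$. Since $m$ is odd we have $Z(K)=\{1\}$, hence $C\cap K=\{1\}$; as every involution of $G$ lies in $K$, the subgroup $C$ contains no involution and so has odd, solvable order, while $G/C$ embeds in $\Aut(D_{2m})\cong C_m\rtimes C_{m-1}$, which is solvable. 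Thus $G$ would be solvable, so this case cannot occur and, from now on, some two distinct involutions commute.

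Next I would force $O(G)=\{1\}$. By Lemma \ref{properties}(4) the quotient $G/O(G)$ is $(m/\ell)$-pyramidal for some $\ell\mid m$, so $\ell\in\{1,m\}$. If $\ell=m$, then $G/O(G)$ has a unique involution and all $m$ involutions lie in a single coset $O(G)\varepsilon$ of $O(G)$; the product of two of them equals an odd-order element of $O(G)$ and so cannot be an involution, whence no two of these involutions commute, against our hypothesis. Hence $\ell=1$, $G/O(G)$ is a non-solvable $m$-pyramidal group, and minimality forces $O(G)=\{1\}$. Now let $N$ be a minimal normal subgroup of $G$. Since $O(G)=\{1\}$ there is no odd minimal normal subgroup, so $N$ is either an elementary abelian $2$-group or a direct product $S_1\times\cdots\times S_t$ of non-abelian simple groups, and I treat the two possibilities in turn.

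Suppose first that $N$ is non-abelian. Being normal, $N$ contains every conjugate of any involution it contains, hence all $m$ involutions. Conjugation by $G$ permutes the factors $S_i$ and therefore preserves the number of nontrivial coordinates of an involution of $N$; since all involutions are conjugate they share this ``weight'', which is impossible when $t\geq2$ (weight-one and weight-two involutions would coexist). Thus $t=1$, $N=S$ is simple, and $C_G(S)$, having no involution and odd order, lies in $O(G)=\{1\}$; so $G$ is almost-simple with socle $S$, and $S$ carries exactly $m$ involutions forming one $G$-class. This class is a disjoint union of equal-sized $S$-classes, say $c$ classes of size $s$ with $cs=m$ prime. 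If $c=1$ the involutions form a single $S$-class of prime, hence prime-power, size, so Burnside's Theorem \ref{Burnside} shows $S$ is not simple, a contradiction; if $c=m$ then $S$ has $m$ central involutions, impossible since $Z(S)=\{1\}$. This case is therefore excluded.

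Finally, suppose $N$ is an elementary abelian $2$-group. Then $N$ contains all involutions, these commute, and $K=N\cong C_2^n$ with $2^n-1=m$; Lemma \ref{Mersenne} makes $n$ prime, and Lemma \ref{el2gp}---exactly the place where the hypothesis $m\neq7$, i.e.\ $n\neq3$, is needed---gives $G/C\in\{C_m,\,C_m\rtimes C_n\}$, which is solvable. It remains to prove that $C=C_G(K)$ is solvable, and I expect this to be the main obstacle, since $C$ is not itself pyramidal and may a priori be non-solvable (e.g.\ $C_2\times\SL(2,5)$ has all involutions central). The key extra input is that $G$ conjugates the $m$ involutions of $K$ transitively. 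Since $K\le Z(C)$ carries all involutions of $C$, every involution of a component $L$ of $C$ is central in $L$, so $Z(L)$ is a $2$-group absorbing all $2$-torsion of $L$; as $G$ permutes the components, the set of their central involutions is a nonempty $G$-invariant subset of the single class $K\setminus\{1\}$ and hence equals it. Choosing components $L_1,L_2$ with distinct central involutions $z_1,z_2$ and order-$4$ elements $u_i$ with $u_i^2=z_i$, the product $z_1z_2$ must be the centre of a third component $L_3$, and then $u_1u_2u_3^{-1}$ is an involution that fails to centralise $L_1$, contradicting that every involution is central. Hence $C$ has no components; since also $O(C)\le O(G)=\{1\}$, the generalised Fitting subgroup $F^{*}(C)=O_2(C)$ self-centralises, and the remaining point---controlling $C/O_2(C)$, whose involutions would have to lift to central elements of $K$---is what must be pushed through to conclude that $C$, and therefore $G$, is solvable, the final contradiction.
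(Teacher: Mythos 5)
Your proposal is sound and closely parallels the paper up to the last case: the non-commuting-involutions case (handled via Proposition \ref{dihedral} and $\Aut(D_{2m})$), the reduction to $O(G)=\{1\}$, and the case of a non-abelian minimal normal subgroup (your ``weight'' argument is a pleasant alternative to the paper's involution count $m=(i(S)+1)^n-1$, and the endgame via Theorem \ref{Burnside} is the same). The genuine gap is exactly where you say it is: in the case $K=N\cong C_2^n$ you reduce everything to showing that $C=C_G(K)$ is solvable, and this is not done. This is not a routine loose end --- it is the entire difficulty of the theorem. Note first that your intended tools cannot suffice as stated: ``all involutions of $C$ are central'' does not imply solvability ($\SL(2,5)$ satisfies it with $O=1$), and even adding ``$E(C)=1$, so $F^*(C)=O_2(C)$'' leaves open nonsolvable configurations of the form $O_2(C)\cdot(\text{nonabelian simple})$ in which involutions of $C/O_2(C)$ lift only to elements of order $4$; self-centralization of $O_2(C)$ gives an embedding of $C/Z(O_2(C))$ into $\Aut(O_2(C))$, which is nowhere near solvability. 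A further structural problem is that your plan has no way to invoke the minimality of $|G|$ at this point: $C$ and its sections are not $m$-pyramidal, so induction does not apply to them. The paper instead exploits supplements of $C$ (Lemma \ref{properties}(1) makes every even-order subgroup $H$ with $HC=G$ again $m$-pyramidal, hence solvable by minimality), proves that $\Phi(G)$ is a $2$-group containing $K$ with $G/\Phi(G)$ primitive of type II, and then derives a contradiction using the arithmetic ``$n$-type'' condition (Lemma \ref{ndividesa} together with Lemma \ref{properties}(5)), the classification of minimal almost-simple groups (Theorem \ref{minimalalmostsimple}, Lemma \ref{G_0=S}, via \cite{DanLevy}, \cite{LZ}), the tables of \cite{BHR}, and a final wreath-product/diagonal-subgroup argument. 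None of this CFSG-based machinery is replaceable by the $F^*$-analysis you sketch, at least not without substantial new ideas.

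Separately, the component argument you do give has real gaps even on its own terms: two distinct components may share the same central involution (so ``distinct central involutions $z_1,z_2$'' may not exist); the existence of elements $u_i$ of order $4$ with $u_i^2=z_i$ in a component is asserted, not proved; the component $L_3$ with $z_1z_2\in Z(L_3)$ need not be distinct from $L_1,L_2$; and if $u_1\in Z(L_1)$ (centers of components can have exponent $4$), then $u_1u_2u_3^{-1}$ does centralize $L_1$ and no contradiction results. So even the intermediate claim $E(C)=1$ is not established. In summary: roughly the first three quarters of your argument reproduce the paper's proof correctly, but the final case --- the one that consumes Steps 1 through 5 and all the classification input in the paper --- remains open in your write-up, as you yourself acknowledge.
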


Assume, by contradiction, that $G$ is a nonsolvable $m$-pyramidal group of minimal order. Let $O=O(G)$ be the largest normal subgroup of $G$ of odd order. Then $G/O$ is $1$-pyramidal or $m$-pyramidal by Lemma \ref{properties}(4). Set $C:=C_G(K)$.

Assume that $G/O$ is $1$-pyramidal. Then the Sylow $2$-subgroup of $G$ is a generalized quaternion group or a cyclic group, so for any two involutions $x_i$, $x_j$ of $G$ we have $x_i x_j \neq x_j x_i$. This implies that $C$ does not contain involutions, so $|C|$ is odd and hence $C$ is solvable. By Proposition \ref{dihedral} we have $K\cong D_{2m}$, therefore 
$$G/C \lesssim \Aut(D_{2m})\cong C_m\rtimes \Aut(C_m) \cong C_m\rtimes C_{m-1}.$$ 
Since both $C$ and $G/C$ are solvable, we deduce that $G$ is solvable, a contradiction.

We may assume, therefore, that $G/O$ is $m$-pyramidal. Since $O$ is solvable, the minimality of $|G|$ implies that $O = \{1\}$. Let $N$ be a minimal normal subgroup of $G$, then $N \cong S^n$ or $N \cong C^n_2$ where $n \geqslant 1$ and $S$ is a nonabelian simple group (see for example \cite[Chapter 4]{JLRB}).

Suppose first that $N\cong S^n$ where $S$ is a nonabelian simple group. Let $i(X)$ denote the number of involutions of the group $X$. Then
$$m = i(N) = (i(S)+1)^n-1 = i(S) \cdot ((i(S)+1)^{n-1}+ \ldots +1).$$
Therefore $m$ is divisible by $i(S)$, so $i(S)=m$ and $n=1$, because $m$ is a prime number and a nonabelian simple group has more than one involution. Let $x_1, x_2, \ldots, x_m$ be the $m$ involutions. Since they are conjugate in $G$ and $S \unlhd G$, we have $|x^S_i|=|x^S_j|=d$ for every $i,j$. Since $S$ has $m$ involutions, this implies that $d$ is a divisor of $m$. Since $m$ is prime and $S$ is a nonabelian simple group, $d=m$, therefore $S$ has a conjugacy class of prime size $m$, contradicting Theorem \ref{Burnside}.

\medskip

Suppose that $N\cong C_2^n$. Since $N$ contains involutions, it follows that $K=N$. Since $2^n-1=m$ is a prime number, $n$ is prime by Lemma \ref{Mersenne}. By Lemma \ref{el2gp}, $G/C$ is isomorphic to one of $C_m\rtimes C_n$, $C_m$. Assume $G/C\cong C_m\rtimes C_n$, so that there is a normal subgroup $A$ of $G$ containing $C$ such that $A/C\cong C_m$ and $G/A \cong C_n$. Note that $A$ has even order and it contains a Sylow $m$-subgroup of $G$, so Lemma \ref{properties}(3) implies that $A$ is $m$-pyramidal. The minimality of $|G|$ implies that $A$ is solvable and hence $G$ is solvable because $G/A\cong C_n$, giving a contradiction.

So from now on we can assume that $K=N \cong C_2^n$ and $G/C\cong C_{2^n-1} = C_m$. Note that both $K$ and $C$ are normal in $G$. We will use several times the following fact: if $H$ is a proper subgroup of $G$ such that $HC=G$ then $H$ is solvable. When $|H|$ is odd, this follows from the Feit-Thompson theorem, and when $|H|$ is even, this follows from Lemma \ref{properties}(1) and the minimality of $|G|$. Since $G/C\cong C_m$ has prime order, $C$ is a maximal normal subgroup of $G$. Since $O(G)=\{1\}$, the minimal normal subgroups of $G$ have even order, so they contain all the involutions of $G$. This implies that the unique minimal normal subgroup of $G$ is $K$. 

\subsection{Step 1} We will prove that there exists a normal subgroup $N$ of $G$ with $\Phi(G) < N \leqslant C$ such that $G/N$ is a cyclic $m$-group, $N/\Phi(G)$ is a nonabelian chief factor of $G$, isomorphic to $S^t$ for some nonabelian simple group $S$, and $\Phi(G)$ is a $2$-group containing $K$. 
$$\xymatrix{\{1\} \ar@{-}[rr]_{2^a} & & \Phi(G) \ar@{-}[rr]_{S^t} & & N \ar@{-}[rr]_{m^b} & & C \ar@{-}[rr]_{m} & & G}$$
\begin{proof}
Let $N$ be a normal subgroup of $G$ such that $G/N$ is cyclic and $N$ is of minimal order with this property. If $G/N$ is not an $m$-group, then there exists $L/N \unlhd G/N$ such that $|G:L|$ is a prime distinct from $m$, so that $LC=G$ since $|G:C|=m$, so $L$ is solvable. Since $G/L$ is solvable, this contradicts the fact that $G$ is nonsolvable. So $G/N$ is a cyclic $m$-group. Moreover $N$ is contained in $C$ because otherwise $NC=G$ and then $N$ would be solvable, so $G$ would be solvable as well. 
Consider a normal subgroup $R$ of $G$ contained in $N$ with the property that $N/R$ is a minimal normal subgroup of $G/R$. We claim that $R=\Phi(G)$. At this step, we need to prove that $\Phi(G)$ is a nontrivial $2$-group. First, we assume that $\Phi(G)=\{1\}$, then $K\cap \Phi(G)=\{1\}$. Since $K$ is an abelian normal subgroup of $G$, $K$ has a complement $A$ in $G$, that is $G=K\rtimes A$. The subgroup $K$ is generated by all involutions of $G$, so $|A|$ is odd, this implies that $G$ is solvable, a contradiction. Next, note that $\Phi(G)$ is a $2$-group because otherwise there would exist a nontrivial Sylow subgroup $P$ of $\Phi(G)$ of odd order, and since $P \unlhd_c \Phi(G) \unlhd G$, being $\Phi(G)$ nilpotent, $P$ would be a nontrivial normal subgroup of $G$ of odd order, contradicting $O(G)=\{1\}$. In particular $\Phi(G)$ contains involutions, so $K \leqslant \Phi(G)$. Since $\Phi(G)/\Phi(G) \cap N \cong \Phi(G)N/N \leqslant \Phi(G/N)$ is an $m$-group, we deduce that $\Phi(G) \leqslant N$. By minimality of $|N|$, $G/R$ is not a cyclic $m$-group, so if $x \in G$ is an $m$-element that generates $G/C$, then $R \langle x \rangle$ is a proper subgroup of $G$, moreover $R \langle x \rangle C = G$, so $R$ is solvable. Since $R \leqslant N \leqslant C$, to prove that $R \leqslant \Phi(G)$ it is enough to prove that if $M$ is a maximal subgroup of $G$ distinct from $C$, then $R \leqslant M$. We have $MC=G$, so $M$ is solvable hence $MR \neq G$ being $G$ nonsolvable and $M,R$ solvable. Therefore $R \leqslant M$, implying that $R \leqslant \Phi(G)$. Since $G/N$ is an $m$-group and $G$ is nonsolvable, we deduce that $\Phi(G) \neq N$, in other words $\Phi(G)$ is properly contained in $N$. Since $R \leqslant \Phi(G) < N$ and $N/R$ is a minimal normal subgroup of $G/R$, we deduce that $R=\Phi(G)$. Since $\Phi(G)$ is nilpotent, the nonsolvability of $G$ implies that $N/\Phi(G)$ is a nonabelian chief factor of $G$, isomorphic to a direct power $S^t$ for some nonabelian simple group $S$.
\end{proof}

\subsection{Step 2} Let $M$ be a maximal subgroup of $G$ with $M \neq C$. Then the normal core $M_G$ of $M$ in $G$ equals $\Phi(G)$ and consequently $G/\Phi(G)$ is a primitive group.

\begin{proof}
By Lemma \ref{properties}(1), the maximal subgroups of $G$ distinct from $C$ are solvable by minimality of $|G|$, since they supplement $C$. If $M_1$ is a maximal subgroup of $G$ such that $M_1\neq C$ then $M_G \leqslant M_1$, since otherwise we would have
$M_1M_G=G$, contradicting the fact that $G$ is nonsolvable, since $M_1$ and $M_G$ are solvable. So
every maximal subgroup of $G$ different from $C$ contains $M_G$, implying that $\Phi(G) = M_G\cap C$. To conclude the proof, we need to prove that $M_G \leqslant C$. If this is not the case, then $C M_G = G$.
Note that $G/M_G=CM_G/M_G\cong C/\Phi(G)$ has a subgroup $N/\Phi(G)\cong S^t$. Since every nonabelian simple group has order divisible by $4$, there exist two subgroups $A, B\leqslant G$ containing $M_G$
such that $A\leqslant B$ and $|B:A|=|A:M_G|=2$. Since $A$ and $B$ contain $M_G$, $AC=BC=G$, hence
$A$ and $B$ are both $m$-pyramidal by Lemma \ref{properties}(1). Since $K \leqslant \Phi(G) \leqslant M_G \leqslant A$ and $K\cong C^n_2$, this contradicts Lemma \ref{ndividesa}.
\end{proof}

\subsection{Step 3} Let $W:=G/\Phi(G)$, it is a primitive group whose socle is $N/\Phi(G) \cong S^t$, it is the unique minimal normal subgroup of $W$. Let $T_1$ be the first direct factor of $N/\Phi(G) \cong S^t$ and let $G_1 := N_W(T_1)/C_W(T_1)$. Then $G_1$ is an almost-simple group with socle $T_1 C_W(T_1)/C_W(T_1) \cong S$ and we will identify $G_1$ with a subgroup of $\Aut(S)$ containing $S$. We say that a finite group $G$ of order $2^ad$ with $d$ odd is of \textit{$n$-type} if $n$ divides $a$. Then $W$ is of $n$-type and, if $U$ is a maximal subgroup of $G_1$ such that $US=G_1$, then $U$ is a solvable group of $n$-type.

\begin{proof}
Write $|\Phi(G)|=2^a$ and $|G|=2^b\cdot d$, we have $|W|=2^{b-a}\cdot d$. By Lemma \ref{properties}(5), $m=2^n-1$ divides $2^a-1$, and since $n$ is prime we deduce that $n$ divides $a$. Moreover $n$ divides $b$ by Lemma \ref{ndividesa}, therefore $n$ divides $b-a$, in other words $W$ is of $n$-type. We refer to \cite[Chapter 1]{BE} for the general properties of primitive groups. Since $N/\Phi(G)$ is a nonabelian minimal normal subgroup of $W$ and $G/N$ is cyclic, $W$ is a primitive group of type II, meaning that it admits a unique minimal normal subgroup and this subgroup is nonabelian. Since $G/N$ is a cyclic $m$-group, we have an embedding of $W$ in the standard wreath product $G_1 \wr P$ where $P$ is a cyclic $m$-subgroup of $\Sym(t)$, the image of the permutation representation of $W$ acting transitively by conjugation on the $t$ minimal normal subgroups of its socle (see \cite[Remark 1.1.40.13]{BE}). In particular, $t$ is a power of $m$.

Let $U$ be a maximal subgroup of $G_1$ such that $US=G_1$ and let $V := (U \cap S)^t \leqslant S^t = \soc(W)$. We claim that $N_W(V) \soc(W) = W$. Let $g \in W$. Identifying $W$ with a subgroup of $G_1 \wr P$, we can write $g = (x_1,\ldots, x_t) \gamma$ where $x_i \in G_1$ for all $i$ and $\gamma\in P$. Since $US=G_1$, there exist $s_i \in S$, $u_i\in U$ such that $x_i = s_iu_i$ for all $i=1,\ldots,t$, and setting $n:=(s_1,\ldots, s_t) \in \soc(W)$ we have $h := (u_1,\ldots, u_t)\gamma = n^{-1} g \in W$, therefore
$$V^h = ((U \cap S)^{u_1} \times \cdots \times (U \cap S)^{u_t})^\gamma = V^\gamma = V.$$
We deduce that $h=n^{-1}g\in N_W(V)$ and since $n \in \soc(W)$ the claim follows.

We claim that $N_W(V) \neq W$. If this was not the case, then $V$ would be normal in $W$ and, since $N/\Phi(G)$ is a minimal normal subgroup of $W$ and $U \cap S \neq S$, since $US=G_1$, the simplicity of $S$ implies $U \cap S=\{1\}$ and $G_1/S \cong U$. It follows that $U$ is isomorphic to a section of $W/\soc(W) \cong G/N$, which is cyclic, therefore $U$ is a cyclic maximal subgroup of the almost-simple group $G_1$, contradicting Herstein's theorem, which says that a nonsolvable finite group cannot have abelian maximal subgroups (see \cite[Theorem 5.53]{AM}). So the claim follows.

Since $\Phi(G)$ is a nontrivial $2$-group, the above two paragraphs imply that the preimage of $N_W(V)$ in $G$ is a proper subgroup of $G$ of even order and it is not contained in $C$, so it is $m$-pyramidal by Lemma \ref{properties}(1) hence it is solvable by minimality of $|G|$. So $U \cap S$ is solvable. Since $U/U \cap S \cong US/S = G_1/S$ is a cyclic $m$-group, we deduce that $U$ is solvable.

Summarizing, we have $N_W(V) \soc(W) = W$, $N_W(V) \neq W$, $G_1/S$ is a cyclic $m$-group and $G_1$ has only one nonsolvable maximal subgroup, which is the unique maximal subgroup containing $S$. By Lemma \ref{G_0=S}, $U \cap S$ is a maximal subgroup of $S$. Therefore, $N_S(U\cap S)=U\cap S$, hence
$$N_W(V)\cap soc(W)=N_{soc(W)}(V)=N_{s^m}((U\cap S)^m)=(N_S(U\cap S))^m=V.$$
Since $N_W(V) \soc(W) = W$ and $U/U \cap S \cong G_1/S \cong C_{m^s}$ is a cyclic $m$-group, writing $|U|=2^d \cdot r$ with $r$ odd and $W/\soc(W) \cong C_{m^b}$, we have 
\begin{align*}
|N_W(V)| = |W:\soc(W)| \cdot |N_W(V) \cap \soc(W)| = m^b \cdot |U \cap S|^t = m^{b-st} \cdot 2^{dt} \cdot r^t.  
\end{align*}
Since the preimage of $N_W(V)$ in $G$ is $m$-pyramidal by Lemma \ref{properties}(1), and $\Phi(G)$ is of $n$-type by Lemma \ref{properties}(5), $N_W(V)$ is of $n$-type by Lemma \ref{ndividesa}, in other words $dt$ is divisible by $n$. Since $t$ is a power of $m$, we conclude that $d$ is divisible by $n$. This means exactly that $U$ is of $n$-type.
\end{proof}  

\subsection{Step 4} $G_1=S$.

\begin{proof}
Recall that, if $X$ is an almost-simple group with socle $S$, which is nonabelian and simple, we say that a maximal subgroup $H$ of $S$ is $X$-\textit{ordinary} if its $X$-class equals its $S$-class, in other words for every $x\in X$ there exists $s\in S$ such that $H^x=H^s$. If $H$ is an $X$-ordinary maximal subgroup of $S$, then $N_X(H)$ is maximal in $X$ with $N_X(H)S=X$, and $|N_X(H)|=|X||H|/|S|$ by \cite[Lemma 2.2]{GG}. Of course, if $S \leqslant X \leqslant Y \leqslant \Aut(S)$ and $H$ is a $Y$-ordinary subgroup of $S$, then $H$ is also $X$-ordinary. Concerning our situation, we have $S\leqslant G_1\leqslant \Aut(S)$, $G_1/S$ is a cyclic $m$-group and every maximal subgroup of $G_1$ not containing $S$ is solvable. Assume by contradiction that $G_1\neq S$. Since $G_1/S$ is not a $2$-group, $\Out(S)$ is not a $2$-group, so with the help of Theorem \ref{minimalalmostsimple} we have $S\cong \PSL(2,2^m)$, $\PSL(2,3^m)$ or $Sz(2^m)$. By \cite[Table 8.1]{BHR}, if $S\cong \PSL(2,2^m)$ or $\PSL(2,3^m)$ then $S$ has a $G_1$-ordinary maximal subgroup $H$ of type $E_{2^m}:(2^m-1)$ in the first case and $\PSL(2,3)$ in the second case when $m>3$. Thus $|N_{G_1}(H)|=2^m\cdot m\cdot (2^m-1)$ or $2^2\cdot 3\cdot m$ because $|G_1:S|=m$, contradicting the fact that $N_{G_1}(H)$ is of $n$-type since $m$ and $n$ are distinct prime numbers, with $n$ odd unless $(n,m)=(2,3)$. If $m=3$ and $S \cong \PSL(2,3^3)$ then $G_1$ has a maximal subgroup of type $C_{13} \rtimes C_6$, which is not of $2$-type (see the second line of \cite[Table 8.1]{BHR}). If $S\cong Sz(2^m)$, by \cite[Table 8.16]{BHR} $S$ has a $G_1$-ordinary maximal subgroup $H$ of type $D_{2(2^m-1)}$. It is easy to deduce that $|N_{G_1}(H)|=2\cdot m\cdot(2^m-1)$ and $N_{G_1}(H)$ is not of $n$-type, a contradiction.
\end{proof}  
  
\subsection{Step 5} Since $W$ is a subgroup of $S\wr P$ containing $S^t = \soc(W)$, with $W$ projecting
surjectively onto the transitive cyclic group $P \leqslant \Sym(t)$, we deduce that $W$ is isomorphic to the standard wreath product $S\wr P$, where $P$ acts on $S^t$ by permuting the coordinates. Consider
$$\Delta=\{(s, s, \ldots, s)\ :\ s\in S\} \leqslant S^t, \hspace{1cm} H:=N_W(\Delta).$$
It is clear that $P \leqslant H$, hence $\soc(W)H=W$. Since $W$ has a normal subgroup of index $m$, we have $t\geqslant 2$, therefore $\Delta$ is not normal in $W$, since $\soc(W)$ a minimal normal subgroup of $W$, therefore $H \neq W$. Since $\Delta \cong S$ is nonsolvable, we obtain that the preimage of $H$ in $G$ is a nonsolvable proper subgroup of $G$, of even order and supplementing $C$, so it is $m$-pyramidal by Lemma \ref{properties}(1). This contradicts the minimality of $|G|$.

This concludes the proof of the fact that, if $m\neq 7$ is a prime number, then any $m$-pyramidal group is solvable.

\section{Proof of Theorem \ref{maintheorem}} \label{primepower}

In this section, we will prove Theorem \ref{maintheorem}.

\begin{prop} \label{Npyr}
Let $N$ be a finite abelian group of odd order and let $A$ be a subgroup of $\Aut(N)$ containing the inversion map $\iota:N \to N$, $n \mapsto n^{-1}$ and with the property that $\iota$ is the unique element of order $2$ in $A$. Then the semidirect product $N \rtimes A$ is $|N|$-pyramidal.
\end{prop}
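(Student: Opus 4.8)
The plan is to identify all involutions of $G=N\rtimes A$ explicitly and then show they form a single conjugacy class, using throughout that the oddness of $|N|$ makes the squaring map $n\mapsto n^2$ a bijection of $N$.

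First I would write a general element of $G$ as a pair $(n,a)$ with $n\in N$, $a\in A$, and multiplication $(n_1,a_1)(n_2,a_2)=(n_1\cdot a_1(n_2),\,a_1a_2)$, where $a(n)$ denotes the image of $n$ under the automorphism $a$. Computing $(n,a)^2=(n\cdot a(n),\,a^2)$, I see that $(n,a)$ is an involution exactly when $a^2=1$ and $n\cdot a(n)=1$. By hypothesis the only elements of $A$ satisfying $a^2=1$ are $a=1$ and $a=\iota$. If $a=1$, the condition becomes $n^2=1$, which forces $n=1$ because $|N|$ is odd; this yields only the identity. If $a=\iota$, the condition $n\cdot\iota(n)=n\cdot n^{-1}=1$ holds for \emph{every} $n\in N$. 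Hence the involutions of $G$ are precisely the $|N|$ elements $(n,\iota)$, $n\in N$.

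Next I would prove these are pairwise conjugate. Conjugating $(n,\iota)$ by $(m,1)$ and using that $N$ is abelian gives $(m,1)(n,\iota)(m,1)^{-1}=(m^2n,\iota)$. Since $|N|$ is odd, squaring is a bijection of $N$, so as $m$ ranges over $N$ the element $m^2n$ ranges over all of $N$; thus every involution $(n',\iota)$ is conjugate to $(n,\iota)$. Combining the two steps, $G$ has exactly $|N|$ involutions and they are all conjugate, i.e. $G$ is $|N|$-pyramidal.

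There is no serious obstacle here: the argument is a short, direct computation in the semidirect product. The one point that must be handled with care, and which is used twice, is that the oddness of $|N|$ makes $n\mapsto n^2$ a bijection of $N$ — once to rule out spurious involutions in the $a=1$ case, and once to obtain transitivity of the conjugation action of $N$ on the set of involutions.
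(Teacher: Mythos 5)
Your proof is correct and follows essentially the same route as the paper's: compute the square of a general element of $N \rtimes A$ to see that the involutions are exactly the elements $n\iota$ with $n \in N$, then conjugate by elements of $N$ and use that squaring is a bijection on the odd-order group $N$ to get transitivity. Your explicit case split ($a=1$ versus $a=\iota$) is in fact slightly cleaner than the paper's wording, which tacitly assumes $n \neq 1$ when arguing $a \neq 1$.
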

\begin{proof}
Write $G=NA$. If an element $na \in NA = G$ has order $2$, then $1 = (na)^2 = nana = (n \cdot n^{a^{-1}}) \cdot a^2$ hence $a^2=1$ and $n^a = n^{-1}$. Since $|N|$ is odd, $n^a = n^{-1} \neq n$, therefore $a \neq 1$. This implies that $a$ has order $2$, so $a=\iota$. This proves that the involutions of $G$ are precisely the elements of the form $n \iota$ with $n \in N$ arbitrary, so $G$ has $|N|$ involutions. Moreover $\iota^n = n^{-1} \iota n = n^{-2} \iota$. Since $|N|$ is odd, we deduce that all the involutions of $G$ are conjugate to $\iota$, hence $G$ is $|N|$-pyramidal.
\end{proof}

For example, let $q$ be an odd prime power and let $V=\mathbb{F}_q^2$. The group $G=V \rtimes \SL(2, q)$ is $q^2$-pyramidal by the above proposition. This shows that, if $p$ is a prime and $k$ is an even positive integer, with $p^k \neq 9$, then there exists a nonsolvable $p^k$-pyramidal group. More in general, if there exist $N$ and $A$ as in the proposition, with $A$ nonsolvable, then we can construct a nonsolvable $|N|$-pyramidal group. 

For example, if there exists a nonsolvable subgroup $H$ of $\GL(k,q)$ with $k$ odd and $-1$ is the unique involution of $H$ then the above argument shows that there exists a nonsolvable $q^k$-pyramidal group. However, this $H$ does not exist, because assuming it exists, defining $U:=H \cap \SL(k,q)$, the order $|U|$ is odd because $H$ contains $-1$ as unique involution and, since $k$ is odd, $-1 \not \in \SL(k,q)$. Therefore $U$ is solvable and $H/U$ is abelian because it is isomorphic to a subgroup of $\GL(k,q)/\SL(k,q) \cong \mathbb{F}_q^{\ast}$, so $H$ is solvable.

We now prove Theorem \ref{maintheorem}, which we state again for convenience.

\begin{thm}
Let $m$ be a prime power $p^k$ with $p$ an odd prime, $p \neq 7$. Then the following are equivalent.
\begin{enumerate}
    \item Every $m$-pyramidal group is solvable.
    \item $k$ is odd or $m=9$.
\end{enumerate}
\end{thm}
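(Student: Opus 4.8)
The plan is to prove the two implications of the equivalence separately, using the substantial machinery already developed. I would first establish $(2) \Rightarrow (1)$, the direction asserting solvability, since this is where Theorem \ref{pripyra} and Step-type arguments do most of the work, and then handle $(1) \Rightarrow (2)$ by constructing explicit nonsolvable groups, for which Proposition \ref{Npyr} is tailor-made.

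\medskip

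\textbf{Direction $(2) \Rightarrow (1)$: solvability.} Suppose $k$ is odd or $m=9$, and let $G$ be an $m$-pyramidal group; I want to show $G$ is solvable. The case $m=9$ should be treated separately, presumably by a short direct argument (or reduction) since $9$ is small. For the main case $k$ odd, I would argue by contradiction, taking $G$ of minimal order among nonsolvable $m$-pyramidal groups, and mimic the strategy of Theorem \ref{pripyra}. As before, I would pass to $O(G)=\{1\}$ using Lemma \ref{properties}(4), and examine a minimal normal subgroup $N$. If $N \cong S^n$ with $S$ nonabelian simple, the involution-counting identity $m = i(N) = (i(S)+1)^n - 1$ forces constraints: here $m = p^k$ is an odd prime power rather than a prime, so Burnside's Theorem (Theorem \ref{Burnside}) is the natural tool, since $i(S)$ must be a prime-power–sized conjugacy-class union inside the simple group $S$. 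If instead $N \cong C_2^n$, so $2^n - 1 = p^k$, then Lemma \ref{Mersenne} applies: a Mersenne prime power is prime, forcing $m$ to be prime, i.e. $k=1$. But $k$ is odd allows $k=1$, so this case genuinely arises and is exactly where Theorem \ref{pripyra} (the prime case, $m \neq 7$) delivers solvability directly, contradicting nonsolvability of $G$. The odd-$k$ hypothesis is essential precisely to rule out the even-exponent constructions below; I expect the crux to be showing that no nonsolvable configuration with $k$ odd survives, reducing everything either to the prime case or to an arithmetic impossibility via Lemma \ref{Mersenne} and Burnside.

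\medskip

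\textbf{Direction $(1) \Rightarrow (2)$: contrapositive via explicit construction.} I would prove the contrapositive: if $k$ is even and $m = p^k \neq 9$, then there exists a nonsolvable $m$-pyramidal group. The key observation, already recorded after Proposition \ref{Npyr}, is that writing $m = p^k = (p^{k/2})^2 = q^2$ with $q = p^{k/2}$ an odd prime power, the group $G = \mathbb{F}_q^2 \rtimes \SL(2,q)$ is $q^2 = m$-pyramidal by Proposition \ref{Npyr}, since $\SL(2,q)$ contains $-I$ as its unique involution and acts on $\mathbb{F}_q^2$ of odd order. Moreover $\SL(2,q)$ is nonsolvable precisely when $q > 3$, i.e. when $q = p^{k/2} > 3$. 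The only excluded case $q \leqslant 3$ with $p$ odd and $k$ even is $q = 3$, giving $m = 9$; this is exactly why $m=9$ is singled out in the statement. Hence for every even $k$ with $p^k \neq 9$ we obtain a nonsolvable $m$-pyramidal group, so $(1)$ fails. This direction is the easy one, being a direct application of the already-proven Proposition \ref{Npyr} together with the classical fact that $\SL(2,q)$ is nonsolvable for $q \geqslant 4$.

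\medskip

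\textbf{Main obstacle.} The hard part will be the solvability direction for $k$ odd with $k > 1$, where $m = p^k$ is an odd prime power that is \emph{not} prime, so Theorem \ref{pripyra} does not apply directly and the clean elementary-abelian analysis of Lemma \ref{el2gp} (which forced $m$ prime via Lemma \ref{Mersenne}) must be replaced by a more delicate minimal-counterexample argument. I anticipate the real work lies in the $N \cong S^n$ case: controlling the number of involutions $i(S)$ in a nonabelian simple socle and its $G$-conjugacy fusion, to derive a conjugacy class of prime-power size and invoke Burnside, or else to rule out the relevant almost-simple overgroups using Theorem \ref{minimalalmostsimple} and the $n$-type/Lemma \ref{ndividesa} bookkeeping developed in Steps 1--5. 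The restriction $p \neq 7$ enters, as the authors note, through the matrix-block construction in Lemma \ref{el2gp}, so care is needed to ensure every appeal to that lemma respects this hypothesis.
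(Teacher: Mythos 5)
Your direction $(1) \Rightarrow (2)$ is correct and coincides with the paper's argument: for $k$ even and $m \neq 9$, the group $\mathbb{F}_q^2 \rtimes \SL(2,q)$ with $q = p^{k/2} > 3$ is nonsolvable and $m$-pyramidal by Proposition \ref{Npyr}. The solvability direction, however, has a genuine gap, and it sits exactly at the step you treat as routine: ``pass to $O(G) = \{1\}$ using Lemma \ref{properties}(4).'' That lemma does \emph{not} say $G/O(G)$ is $m$-pyramidal; it says $G/O(G)$ is $m/\ell$-pyramidal, so $G/O(G)$ is $p^a$-pyramidal for some $0 \leqslant a \leqslant k$. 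Minimality of the counterexample only bites when $a$ is odd (then $G/O(G)$ is either solvable, making $G$ solvable, or a smaller nonsolvable group of the same kind; either way a contradiction). When $a$ is even --- in particular $a=0$, i.e.\ $G/O(G)$ has a unique involution --- no contradiction is available, because nonsolvable $1$-pyramidal groups exist in abundance ($\SL(2,q)$ itself is one). So you cannot assume $O(G)$ is trivial, and the case $O(G) \neq \{1\}$ with $G/O(G)$ $1$-pyramidal is precisely the hard core of the paper's proof, which your sketch never touches. The paper handles it via generalized quaternion Sylow $2$-subgroups, Brauer--Suzuki (Lemma \ref{quaternionsylow}), the dihedral-Sylow classification (Lemma \ref{dihedralsylow}) applied to $G/Z$, a proof that $H = C_G(\varepsilon)$ is nonsolvable, an eigenspace analysis of $\varepsilon$ acting on a minimal normal subgroup $R \leqslant O(G)$ forcing $G = R \rtimes H$ with $R \cong \mathbb{F}_p^k$ and $\varepsilon$ acting as inversion, and finally the determinant computation $(-1)^k = \det(\overline{\varepsilon}) = 1$, which is the one and only place where the hypothesis ``$k$ odd'' is actually used. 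Your prediction that the real work lies in the $N \cong S^n$ case is off: that case is dispatched in a few lines by Burnside's theorem, both in the paper and (correctly) in your own sketch.

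Two smaller points. First, the $m=9$ case cannot be waved away as ``a short direct argument since $9$ is small'': the paper's proof of it needs the solvability of $3$-pyramidal groups from \cite{GG}, Burnside again, and a computational (GAP) verification that no nonsolvable primitive group of degree $9$ has generalized quaternion Sylow $2$-subgroups. Second, your $C_2^n$/Mersenne step is fine as far as it goes ($2^n-1 = p^a$ with $p$ odd forces $a \in \{0,1\}$ by Lemma \ref{Mersenne}, and $a=1$ is settled by Theorem \ref{pripyra}), but in the correct setup it concerns a minimal normal subgroup of $G/O(G)$, not of $G$, so it only reduces the problem to the $a=0$ case described above rather than finishing the proof.
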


If $k$ is even and $m \neq 9$ then, choosing $q=p^{k/2}$, the group $\mathbb{F}_q^2 \rtimes \SL(2, q)$ is $m$-pyramidal and nonsolvable, as we observed above. This proves that (1) implies (2). The proof of the other implication is the content of the following two propositions. We will use the notation $i(G)$ to denote the number of involutions of $G$.

\begin{prop}
Any $9$-pyramidal group is solvable.
\end{prop}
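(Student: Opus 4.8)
The plan is to argue by contradiction, taking a nonsolvable $9$-pyramidal group $G$ of minimal order and showing that it cannot exist. The strategy mirrors the structure of the proof of Theorem \ref{pripyra}, but the arithmetic of $m=9$ is different because $9$ is not prime, so the primality-based arguments (Parts (3) and (5) of Lemma \ref{properties}, and Burnside's Theorem) are not directly available. First I would set $O = O(G)$ and invoke Lemma \ref{properties}(4): since $G$ is nonsolvable and $O$ is solvable, minimality forces $O = \{1\}$, after dealing with the alternative that $G/O$ is $1$-pyramidal (in which case the Sylow $2$-subgroups are cyclic or generalized quaternion, so no two involutions commute, and Proposition \ref{dihedral} together with $\Aut(D_{18})$ being solvable yields solvability of $G$, a contradiction).

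With $O(G) = \{1\}$, a minimal normal subgroup $N$ is either $S^n$ with $S$ nonabelian simple, or elementary abelian. Since $m = 9$ is odd, any minimal normal subgroup of even order contains all the involutions, so the elementary abelian case would force $K \cong C_2^n$ with $2^n - 1 = 9$, which has no solution; hence $N \cong S^n$. The key counting step is $i(N) = (i(S)+1)^n - 1 = 9$, so $i(S)+1$ is a divisor of $10$ exceeding $1$, giving $i(S)+1 \in \{5,10\}$ and thus $i(S) \in \{4,9\}$ with $n=1$ in the latter subcase and $(i(S),n) = (4,1)$ otherwise (the case $i(S)+1=10$, $n=1$ gives $i(S)=9$; the case $i(S)+1=5$, $n=1$ gives $i(S)=4$; note $n>1$ is impossible since $(i(S)+1)^2 - 1 \geqslant 24 > 9$). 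So $S$ is a nonabelian simple group with exactly $4$ or $9$ involutions. The heart of the argument will be to pin down which simple groups have precisely $4$ or $9$ involutions and then derive a contradiction with the conjugacy condition.

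Here I expect the main obstacle. A nonabelian simple group with a very small number of involutions is severely constrained: by a theorem on groups of small $2$-rank (or directly from the classification of simple groups with dihedral or small Sylow $2$-subgroups, cf. Lemma \ref{dihedralsylow}), the candidates are essentially $\PSL(2,q)$ for small $q$ and $A_5 \cong \PSL(2,4) \cong \PSL(2,5)$, $\PSL(2,7)$, etc. One should compute $i(\PSL(2,q))$, which equals $q(q+1)/2$ or $q(q-1)/2$ depending on the parity of $q$, and check that this is never $4$ and determine exactly when it is $9$. I would verify that no nonabelian simple group has exactly $4$ involutions (the smallest, $A_5$, already has $15$), and identify those with exactly $9$ if any exist. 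If no simple group has $4$ or $9$ involutions, we are done immediately; if some $S$ has $9$ involutions, the final contradiction comes as in Theorem \ref{pripyra}: the $9$ involutions are a single $G$-class and $S \unlhd G$, so they form a single $S$-class (since the $G$-class must split into equal-size $S$-classes and $9$ involutions lie in one $G$-class), giving $S$ a conjugacy class of size $9 = 3^2$, whence by Burnside's Theorem \ref{Burnside} $S$ is not simple, a contradiction. Thus in every case $G$ cannot exist, proving that every $9$-pyramidal group is solvable.
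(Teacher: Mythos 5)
The core of your argument breaks down in the case where $G/O(G)$ is $1$-pyramidal. There you claim that Proposition \ref{dihedral} gives $K \cong D_{18}$, so that $G/C_G(K)$ embeds in the solvable group $\Aut(D_{18})$. But the dihedral conclusion of Proposition \ref{dihedral} holds only for $m$ prime, and it is genuinely false for $m=9$: the group $(C_3\times C_3)\rtimes C_2$, with $C_2$ acting by inversion, has exactly $9$ involutions, no two of which commute, and is generated by them, yet it is not $D_{18}$; the same is true of $P\rtimes C_2$ with $P$ extraspecial of order $27$ and exponent $3$ and the involution inducing inversion on $P/Z(P)$, so even the order of $K$ is not determined. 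All that Proposition \ref{dihedral} gives you for $m=9$ is $K\cong K'\rtimes C_2$ with $|K'|$ odd, and you then have no a priori control over $\Aut(K)$, hence no solvability of $G/C_G(K)$. This is exactly the point where the paper has to work harder: it observes that $J:=G/C_G(K)$ acts faithfully and transitively on the $9$ involutions, that $J$ is nonsolvable with generalized quaternion Sylow $2$-subgroups, that $J$ must be primitive because the maximal imprimitive subgroups of $S_9$ are the solvable groups $S_3\wr S_3$, and finally checks (with GAP's library of primitive groups) that no nonsolvable primitive group of degree $9$ has generalized quaternion Sylow $2$-subgroups. Some argument of this kind, replacing the unjustified claim $K\cong D_{18}$, is indispensable, and your proposal does not contain it.

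Two smaller points. First, Lemma \ref{properties}(4) only says that $G/O(G)$ is $9/\ell$-pyramidal with $\ell\in\{1,3,9\}$, so you must also dispose of the possibility that $G/O(G)$ is $3$-pyramidal; this is easy (every $3$-pyramidal group is solvable, e.g.\ by Theorem \ref{pripyra}), but it is a case your outline skips. Second, in the case $N\cong S^n$ your arithmetic is off: the equation is $(i(S)+1)^n=10$, which forces $n=1$ and $i(S)=9$ outright; $i(S)=4$ is not a solution of this equation (and is impossible anyway, since a group of even order has an odd number of involutions), so no classification of simple groups with few involutions is needed. Also, the $9$ involutions need not form a single $S$-class as you assert --- they could a priori split into three $S$-classes of size $3$ --- but since $3$ and $9$ are both prime powers, Burnside's Theorem \ref{Burnside} yields the contradiction in either case, exactly as in the paper.
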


\begin{proof}
We prove the result by contradiction. Assume $G$ is a nonsolvable $9$-pyramidal group of minimal order. Let $O(G)$ be the maximal normal subgroup of $G$ with odd order. Then $G/O(G)$ is a $1$, $3$ or $9$-pyramidal group. Obviously, $G/O(G)$ is not a $3$-pyramidal group because otherwise $G/O(G)$ would be solvable and then $G$ would be solvable as well, a contradiction.  

Suppose $G/O(G)$ is a $9$-pyramidal group. The minimality of $|G|$ implies that $O(G)=\{1\}$. Let $N$ be a minimal normal subgroup of $G$, then $N \cong C^n_2$ or $S^n$ where $n\geqslant 1$ and $S$ is a nonabelian simple group. Since all involutions of $G$ are conjugate, $2^n-1=9$ or $i(N)=(i(S)+1)^n-1=9$, this implies that $n=1$ and $N \cong S$, and so $G$ is an almost simple group with socle $S$. Let $x_1, x_2,\ldots, x_9 \in S$ be the $9$ involutions of $G$. Since they are conjugate in $G$, the conjugacy classes $x_i^S$, $i=1,\ldots,9$ all have the same size $d$. This implies that $d \in \{3,9\}$ because $Z(S)=\{1\}$, contradicting Theorem \ref{Burnside}.

Finally, we assume that $G/O(G)$ is $1$-pyramidal. Then $G/O(G)$ is nonsolvable and the Sylow $2$-subgroup of $G/O(G)$ is a generalized quaternion group. Therefore, the Sylow $2$-subgroup of $G$ is also a generalized quaternion group and hence each one of them contains a unique involution. This implies that, if $x$, $y$ are two arbitrary distinct involutions of $G$, we have $xy \neq yx$. It follows that $|C_G(K)|$ is odd, and the Sylow $2$-subgroup of $J:=G/C_G(K)$ is also a generalized quaternion group. On the other hand, $J$ is isomorphic to a transitive subgroup of $S_9$ and $J$ does not contain $A_9$ because $A_9$ and $S_9$ do not have generalized quaternion Sylow $2$-subgroups. The maximal imprimitive subgroups of $S_9$ are isomorphic to the wreath product $S_3 \wr S_3$, which is solvable, therefore $J$ is primitive of degree $9$. The nonsolvable primitive groups of degree $9$ do not have generalized quaternion Sylow $2$-subgroups, this can be easily seen using the \text{AllPrimitiveGroups} library in GAP \cite{GAP4}. We have reached a contradiction.
\end{proof}

\begin{prop}
Let $G$ be a $p^k$-pyramidal group, where $p\neq 7$ is prime and $k$ is odd. Then $G$ is solvable.
\end{prop}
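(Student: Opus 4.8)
The plan is to argue by induction on $|G|$, taking $G$ to be a nonsolvable $p^k$-pyramidal group of least order among all admissible data ($p\neq 7$ an odd prime, $k$ odd). Since the case $k=1$ is exactly Theorem \ref{pripyra}, I may assume $k\geqslant 3$. Write $O=O(G)$ and $\bar G=G/O$. By Lemma \ref{properties}(4) the quotient $\bar G$ is $p^{k'}$-pyramidal for some $0\leqslant k'\leqslant k$; moreover $O(\bar G)=\{1\}$, and $\bar G$ is nonsolvable since $O$ is solvable (Feit--Thompson) and $\bar G$ solvable would force $G$ solvable. I would then split according to whether $k'\geqslant 1$ or $k'=0$, i.e.\ whether $\bar G$ has involutions.

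Suppose first $k'\geqslant 1$. If $k'=1$, then $\bar G$ is a nonsolvable $p$-pyramidal group with $p\neq 7$, contradicting Theorem \ref{pripyra}; this is the only place the hypothesis $p\neq 7$ enters. If $k'\geqslant 2$, take a minimal normal subgroup $\bar N$ of $\bar G$. As $O(\bar G)=\{1\}$, either $\bar N\cong C_2^n$ or $\bar N\cong S^t$ with $S$ nonabelian simple; since $\bar N\unlhd\bar G$ meets the (single) conjugacy class of involutions, it contains all of them, so $i(\bar N)=p^{k'}$. The case $C_2^n$ gives $2^n-1=p^{k'}$, impossible for $k'\geqslant 2$ by Lemma \ref{Mersenne} ($p$ odd). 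The case $S^t$ gives $(i(S)+1)^t-1=p^{k'}$: if $t>1$, Lemma \ref{Mersenne} forces $i(S)+1=2$, i.e.\ $S$ has a unique, hence central, involution, absurd; if $t=1$ then $i(S)=p^{k'}$ and the involutions split into $S$-classes of equal size $d\mid p^{k'}$, so $d=p^j$, where $j\geqslant 1$ contradicts Burnside's Theorem \ref{Burnside} and $j=0$ again forces a central involution. Every subcase is a contradiction.

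The remaining, and principal, case is $k'=0$: here $\bar G$ is $1$-pyramidal and $O\neq\{1\}$ (as $\ell=p^k>1$ cannot occur inside a trivial $O$). A Sylow $2$-subgroup of $G$, being isomorphic to one of $\bar G$, has a unique involution, so it is cyclic or generalized quaternion; cyclic is excluded by nonsolvability, so it is generalized quaternion and contains a copy of $Q_8$ whose central involution $\varepsilon$ is the unique involution of the Sylow subgroup. I would pick a minimal normal subgroup $L\leqslant O$, elementary abelian of exponent $q$. If $q\neq p$, then $\varepsilon$ inverts no nontrivial element of $L$, so by Lemma \ref{properties}(4) $G/L$ is again $p^k$-pyramidal, of smaller order, contradicting minimality. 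If $q=p$, decompose $L=C_L(\varepsilon)\oplus L^-$ into the $\pm 1$-eigenspaces of $\varepsilon$; then $G/L$ is $p^{k-d}$-pyramidal with $d=\dim_{\mathbb{F}_p}L^-$, and the whole argument turns on showing that $d$ is \emph{even}, for then $k-d$ is odd and minimality applied to $G/L$ finishes the proof.

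This parity statement is the step I expect to be the main obstacle. Since $\varepsilon$ is central in $Q_8$, the subspace $L^-$ is $Q_8$-invariant and $\varepsilon$ acts on it as $-1$, making $L^-$ a faithful $\mathbb{F}_p[Q_8]$-module. Over the finite field $\mathbb{F}_p$ the relevant quaternion algebra splits, so the faithful part of $\mathbb{F}_p[Q_8]$ is isomorphic to $M_2(\mathbb{F}_p)$ and its only simple module is $2$-dimensional; concretely, a noncentral $u\in Q_8$ of order $4$ acts on $L^-$ with $u^2=-1$, and a generator $v$ with $vuv^{-1}=u^{-1}$ forces even dimension (either $x^2+1$ is irreducible over $\mathbb{F}_p$, or $v$ interchanges the two eigenspaces of $u$). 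Hence $\dim_{\mathbb{F}_p}L^-$ is even. This is exactly the mechanism that forces ``$k$ even'' in the nonsolvable examples $\mathbb{F}_q^2\rtimes\SL(2,q)$, and it is what excludes the odd exponent. I expect the surrounding bookkeeping — checking that each $G/L$ and $\bar G$ is pyramidal of the asserted type and stays within the inductive framework — to be routine; the even-dimensionality of faithful $Q_8$-modules is the genuine content.
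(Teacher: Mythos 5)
Your proof is correct, and in the decisive case it takes a genuinely different --- and considerably more elementary --- route than the paper's. The preliminary reductions agree: minimal counterexample, case split on the number $p^{k'}$ of involutions of $G/O(G)$, with $k'=1$ handled by Theorem \ref{pripyra} and the nonabelian-socle case handled by Lemma \ref{Mersenne} plus Theorem \ref{Burnside} (you even treat the class-size-one subcase via $Z(S)=\{1\}$, which the paper glosses over). The divergence is in the main case, where $G/O(G)$ is $1$-pyramidal and the Sylow $2$-subgroups are generalized quaternion. There the paper argues in the opposite direction on the same quantity: for a minimal normal subgroup $R\leqslant O(G)$ it deduces from minimality that the exponent drop $t$ in passing to $G/R$ must be \emph{odd}, and then contradicts this using heavy machinery --- Brauer--Suzuki (Lemma \ref{quaternionsylow}), the Gorenstein--Walter classification (Lemma \ref{dihedralsylow}), and Guralnick's theorem on subgroups of prime-power index --- to show $C_G(\varepsilon)$ is nonsolvable, force $G=R\rtimes C_G(\varepsilon)$ with $R\cong\mathbb{F}_p^k$ and $\varepsilon$ acting as $-I$, and reach the determinant contradiction $(-1)^k=\det(\overline{\varepsilon})=1$. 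You instead prove outright that the drop $d=\dim_{\mathbb{F}_p}I_\varepsilon$ is \emph{even}: $I_\varepsilon$ is invariant under $C_G(\varepsilon)$, hence is a module for a copy of $Q_8$ inside the quaternion Sylow subgroup on which the central involution acts as $-1$, and such a module has even dimension (both of your cases --- $x^2+1$ irreducible over $\mathbb{F}_p$, or $v$ swapping the two eigenspaces of $u$ --- are correct). Minimality applied to $G/L$ then finishes immediately, so your argument removes every classification-type ingredient and also explains conceptually why the nonsolvable examples $\mathbb{F}_q^2\rtimes\SL(2,q)$ occur only for even $k$. One small point of rigor: in the subcase $q\neq p$, the assertion that $\varepsilon$ inverts no nontrivial element of $L$ is not a priori; it holds because $|I_\varepsilon|$ divides both $|L|$ (a power of $q$) and $p^k$ (since $G/L$ is $p^k/|I_\varepsilon|$-pyramidal by Lemma \ref{properties}(4)), so the divisibility should be invoked before, not after, that conclusion. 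With that reordering your proof is complete.
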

\begin{proof}
We prove the result by contradiction. Assume $G$ is a nonsolvable pyramidal group with a number of involutions that is a prime power with odd exponent, and assume $G$ has minimal order with these properties. Write $m=p^k$ with $k$ odd. Let $O:=O(G)$ be the largest normal subgroup of $G$ with odd order, then $G/O$ is a $p^a$-pyramidal group, where $0\leqslant a\leqslant k$. Let $N/O$ be a minimal normal subgroup of $G/O$, then $N/O \cong S^n$ or $C^n_2$, where $n\geqslant 1$ and $S$ is a nonabelian simple group.  First, if $N/O\cong S^n$ then $i(G/O)=(i(S)+1)^n-1=p^a$. Since $i(S) > 1$ and $p$ is odd, Lemma \ref{Mersenne} implies that $n=1$, so $G/O$ is an almost simple group with socle $S$ and $i(S)=p^a$. Since the involutions are all conjugate, they all belong to $S$ and their conjugacy classes in $S$ all have the same size. This implies that the conjugacy class size of an involution in $S$ is a prime power, contradicting Theorem \ref{Burnside}.

In the rest of the proof we will assume that $N/O \cong C^n_2$. In particular $2^n-1 = p^a$ hence $a \in \{0,1\}$ by Lemma \ref{Mersenne}, that is, $G/O$ is $p$-pyramidal or $1$-pyramidal. If $G/O$ is $p$-pyramidal then, by Theorem \ref{pripyra}, $G/O$ is solvable, and hence $G$ is solvable, a contradiction. In the rest of the proof we will assume that $G/O$ is $1$-pyramidal, so that the Sylow $2$-subgroups of $G/O$ are generalized quaternion groups. This implies that the Sylow $2$-subgroups of $G$ are also generalized quaternion, hence if $x,y$ are two arbitrary involutions of $G$ then $xy \neq yx$. By Lemma \ref{quaternionsylow}, the center $Z/O$ of $G/O$ has order $2$. Then the Sylow $2$-subgroups of $G/Z$ are dihedral, hence $G/Z$ satisfies one of the items of Lemma \ref{dihedralsylow}, in particular either $G/Z \cong A_7$ or $G/Z$ has a simple normal subgroup $S/Z \cong \PSL(2,q)$, $q=r^f$, $r$ an odd prime. In the second case $G/Z$ is an extension of $S/Z$ and a subgroup of $\Out(\PSL(2,q)) \cong C_2 \times C_f$. Observe that the centralizer $H:=C_{G}(\varepsilon)$ has $\varepsilon$ as unique involution because any two involutions of $G$ do not commute, and hence the order of $C=C_G(K)$ is odd and $C$ is contained in $O$. Obviously, $|G:H|=m=p^k$. Now, 
$$|S:HZ \cap S| = |HS:HZ| = \frac{|HS:H|}{|HZ:H|}$$ 
hence $HZ \cap S$ has prime power index in $S$, say $p^b$ with $b \leqslant k$. 

We claim that $H$ is nonsolvable. If $HZ\cap S=S$, i.e., $S\leqslant HZ$, then $HZ/Z \cong H/H \cap Z$ is nonsolvable since it contains $S/Z$, so $H$ is nonsolvable. If $HZ\cap S\neq S$, then \cite[Theorem 1]{RMG} implies that either $HZ \cap S$ has index $r^f+1 = p^b$ in $S$, contradicting the fact that both $r$ and $p$ are odd primes, or $S/Z \cong A_7$ with $p^b=7$, or $S/Z \cong \PSL(2,11)$ with $p^b=11$. The following quotient group is isomorphic to $A_6$ or $A_5$, respectively.
$$\frac{HZ \cap S}{Z} = \frac{Z(S \cap H)}{Z} \cong \frac{S \cap H}{S \cap H \cap Z} = \frac{S \cap H}{Z \cap H}$$
It follows that $H$ is nonsolvable.

Let $R$ be a minimal normal subgroup of $G$ contained in $O$. Note that $R$ is an elementary abelian group. Let
$$I_{\varepsilon} := \{n \in R\ :\ n^{\varepsilon} = n^{-1}\} \leqslant R.$$ 
Lemma \ref{properties}(4) implies that $G/R$ is $m/\ell$-pyramidal where $\ell=|I_{\varepsilon}| = p^t$. By the minimality of $|G|$ we have that $k-t$ is even, thus $t$ is odd. $I_{\varepsilon}$ is normalized by $H$ because, if $h \in H$ and $n \in I_{\varepsilon}$, then $h \varepsilon = \varepsilon h$ hence
$$(n^h)^{\varepsilon} = n^{h \varepsilon} = n^{\varepsilon h} = (n^{-1})^h = (n^h)^{-1}.$$
We can see $R$ as a finite dimensional $\mathbb{F}_p$-vector space acted upon by the linear transformation $\varepsilon$. Since $\varepsilon^2=1$ and $p$ is odd, $\varepsilon$ is diagonalizable over $\mathbb{F}_p$ and its unique eigenvalues are $1$ and $-1$. Observe that $I_{\varepsilon}$ is precisely the eigenspace of $-1$ in $R$ and $H \cap R$ is the eigenspace of $1$, so 
$$\ell = p^t = |I_{\varepsilon}| = |R:H \cap R| = |HR:H|.$$
Since $|G:H|$ is odd, any Sylow $2$-subgroup of $HR$ is a Sylow $2$-subgroup of $G$, hence $HR$ is a $p^t$-pyramidal group by Lemma \ref{quatpyr}. Since $H$ is nonsolvable, using the minimality of $|G|$ again, we have $HR=G$ and $t=k$. Observe that $R$ is an abelian minimal normal subgroup of $G$, thus $H\cap R$ is normal in $H$ and in $R$. Since $HR=G$, we deduce that $H \cap R$ is normal in $G$ hence $H\cap R=\{1\}$ by the minimality of $R$. It follows that $G=R\rtimes H$, $R\cong \mathbb{F}_p^k$ and $\varepsilon$ acts on $R$ as the inversion map $x \mapsto -x$. Let $\overline{H} := H/C_H(R)$. Since $\varepsilon$ is the only involution of $H$ and it does not centralize $R$, the centralizer $C_H(R)$ has odd order, therefore $\overline{H}$ is $1$-pyramidal by Lemma \ref{properties}(4). We can identify $\overline{H}$ with a subgroup of $\GL(k,p)$. Let $U := \overline{H} \cap \SL(k,p)$. If $|U|$ is odd, then $U$ is solvable and hence $\overline{H}$ is also solvable, because $\GL(k,p)/\SL(k,p)$ is cyclic, so $H$ is solvable, a contradiction. Therefore $|U|$ is even, hence $U$ contains an involution. Since $\overline{\varepsilon} = \varepsilon C_H(R)$ is the only involution of $\overline{H}$, we have $\overline{\varepsilon} \in U$, so that $\det(\overline{\varepsilon})=1$. Since $\overline{\varepsilon}$ is the inversion map, we deduce that $(-1)^k = \det(\overline{\varepsilon}) = 1$ contradicting the fact that $k$ is odd.
\end{proof}

\section{Proof of Theorem \ref{orders}} \label{section_orders}

In this section we will prove Theorem \ref{orders}, which we now state again for convenience.

\begin{thm}
Let $m$ be an odd prime number distinct from $7$ and let $X_m$ be the set of orders of $m$-pyramidal groups. If $m$ has the form $2^n-1$ set $Y_m = \{2^a \cdot m \cdot d\ :\ n|a,\ d\ \mbox{odd}\}$, otherwise $Y_m = \varnothing$. Write $m-1=2^t \cdot r$ with $r$ odd and let $Z_m = \{2^a \cdot m \cdot d\ :\ 1 \leqslant a \leqslant t,\ d\ \mbox{odd}\}$. Then $X_m = Y_m \cup Z_m$.
\end{thm}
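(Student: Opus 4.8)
The plan is to prove the two inclusions $X_m \subseteq Y_m \cup Z_m$ and $Y_m \cup Z_m \subseteq X_m$ separately. For the first, let $G$ be an $m$-pyramidal group. By Theorem \ref{pripyra} it is solvable, and since its $m$ involutions form a single conjugacy class we have $|G:C_G(x)|=m$ for any involution $x$, so $m$ divides $|G|$; writing $|G|=2^a\cdot e$ with $e$ odd, $m\mid e$ and $d:=e/m$ is odd, so $|G|=2^a\cdot m\cdot d$. Thus everything reduces to pinning down the possible values of $a$. I would split according to the Sylow $2$-structure, using $O:=O(G)$ and the fact (Lemma \ref{properties}(4)) that $G/O$ is $1$-pyramidal or $m$-pyramidal. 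The basic dichotomy rests on the observation that a Sylow $2$-subgroup of $G$ has a \emph{unique} involution if and only if no two distinct involutions of $G$ commute: two commuting involutions generate a Klein four-group inside a common Sylow $2$-subgroup, while conversely a central involution of a Sylow $2$-subgroup commutes with every involution it contains.

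\textbf{Dihedral case.} If Sylow $2$-subgroups have a unique involution, then by Lemma \ref{quatpyr} and Proposition \ref{dihedral} the subgroup $K$ generated by the involutions is isomorphic to $D_{2m}$, and $G/O$ is $1$-pyramidal. Here $C:=C_G(K)$ has odd order, since any involution of $C$ would lie in $K\cap C_G(K)=Z(D_{2m})=\{1\}$. Hence a Sylow $2$-subgroup of $G$ injects into $G/C\hookrightarrow\Aut(D_{2m})\cong C_m\rtimes C_{m-1}$, whose $2$-part is $2^t$ where $m-1=2^t\cdot r$. Therefore $1\leqslant a\leqslant t$ and $|G|\in Z_m$.

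\textbf{Elementary-abelian case.} Otherwise $G/O$ is $m$-pyramidal with $O(G/O)=\{1\}$, so a minimal normal subgroup of the solvable group $G/O$ is an elementary abelian $2$-group $\overline N\cong C_2^n$; as it contains involutions it equals the subgroup $\overline K$ generated by them, forcing $m=2^n-1$ (so $m$ is Mersenne and $n$ is prime by Lemma \ref{Mersenne}). Applying Lemma \ref{ndividesa} to $G/O$, whose $2$-part is again $2^a$ since $|O|$ is odd, yields $n\mid a$, so $|G|\in Y_m$. The crucial point is that I never need $K$ itself to be elementary abelian in $G$: passing to $G/O$, where $O(G/O)=\{1\}$ makes the elementary-abelian structure unavoidable, is exactly what lets Lemma \ref{ndividesa} apply, and this is the step I expect to be the main obstacle, since it is what rules out any ``mixed'' commuting pattern among the involutions.

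For the reverse inclusion I would exhibit explicit groups. Given an odd $d$, let $D$ be any group of order $d$; since $D$ has no involutions, $G\times D$ is $m$-pyramidal whenever $G$ is, so it suffices to realize the orders with $d=1$ and then multiply by odd groups. For $Z_m$, for each $1\leqslant a\leqslant t$ take $G=C_m\rtimes C_{2^a}$ with $C_{2^a}$ the unique subgroup of order $2^a$ of the cyclic group $\Aut(C_m)\cong C_{m-1}$ (which exists since $2^a\mid 2^t\mid m-1$); it contains the inversion map as its unique involution, so Proposition \ref{Npyr} makes it $m$-pyramidal of order $2^a\cdot m$. For $Y_m$, when $m=2^n-1$, for each $j\geqslant 1$ take the additive group $A\cong(\mathbb{Z}/2^j\mathbb{Z})^n$ of the Galois ring $GR(2^j,n)$ with the Teichmüller units $C_m$ acting by multiplication; the involutions of $A\rtimes C_m$ are the nonzero $2$-torsion elements, forming a single orbit under the action of $C_m\cong\mathbb{F}_{2^n}^{\ast}$ on $\mathbb{F}_{2^n}$, so $A\rtimes C_m$ is $m$-pyramidal of order $2^{jn}\cdot m$, realizing every $a$ with $n\mid a$. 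Combining these constructions with the two structural cases above gives $X_m=Y_m\cup Z_m$.
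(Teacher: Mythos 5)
Your proof is correct, and it relies on the same pillars as the paper's (Proposition \ref{Npyr} and Lemma \ref{ndividesa} via Lemma \ref{Mersenne}, Proposition \ref{dihedral}, solvability from Theorem \ref{pripyra}), but it is organized genuinely differently in both directions. For $X_m \subseteq Y_m \cup Z_m$, the paper runs an induction on $|G|$ over minimal normal subgroups of $G$: an even-order minimal normal subgroup gives $K \cong C_2^n$ and $n \mid a$ by Lemma \ref{ndividesa}, while an odd-order one gives either an $m$-pyramidal quotient (recurse) or a $1$-pyramidal quotient leading to $K \cong D_{2m}$ and $a \leqslant t$. You eliminate the induction entirely by the dichotomy ``Sylow $2$-subgroups have a unique involution or not'': in the first case you go straight to $K \cong D_{2m}$ and embed a Sylow $2$-subgroup in $\Aut(D_{2m})$, and in the second you pass once to $G/O$, where $O(G/O)=\{1\}$ forces the generating involutions to form an elementary abelian minimal normal subgroup, so Lemma \ref{ndividesa} applies to $G/O$ (whose $2$-part equals that of $G$). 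This is cleaner and shorter; the only step you should make explicit is why the second case forces $G/O$ to be $m$-pyramidal rather than $1$-pyramidal --- namely that a $1$-pyramidal quotient would have unique-involution Sylow $2$-subgroups, which are isomorphic to those of $G$ since $|O|$ is odd, contradicting your dichotomy. For $Y_m \cup Z_m \subseteq X_m$, your $Z_m$ construction is identical to the paper's; for $Y_m$ you build the same underlying group $(\mathbb{Z}/2^l\mathbb{Z})^n \rtimes C_m$ as the paper, but where the paper lifts the Singer cycle by hand through the surjection $\GL(n,\mathbb{Z}/2^l\mathbb{Z}) \to \GL(n,2)$ and checks fixed-point-freeness on the socle directly, you invoke Galois rings and Teichm\"uller units, so transitivity on the nonzero $2$-torsion elements comes for free from the identification $2^{l-1}GR(2^l,n) \cong \mathbb{F}_{2^n}$ as a module over $C_m \cong \mathbb{F}_{2^n}^{\ast}$. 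Your version is more conceptual but leans on standard (uncited) Galois ring theory, whereas the paper's is self-contained. One shared cosmetic point: in both your statement and the paper's, the exponents $a$ in $Y_m$ must implicitly be positive multiples of $n$, since orders of $m$-pyramidal groups are even; both constructions only realize $a = ln$ with $l \geqslant 1$, which is what is needed.
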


\begin{proof}
If $P$ is any nontrivial $2$-subgroup of $\Aut(C_m) \cong C_{m-1}$ then, by Proposition \ref{Npyr}, $C_m \rtimes P$ is $m$-pyramidal of order $m \cdot 2^a$ with $1 \leqslant a \leqslant t$, and if $d$ is any odd positive integer then $C_d \times (C_m \rtimes P)$ is $m$-pyramidal of order $2^a \cdot m \cdot d$, therefore $Z_m \subseteq X_m$. If $m$ has the form $2^n-1$ then the multiplicative group of the finite field $F=\mathbb{F}_{2^n}$ is cyclic generated by an element $x$ of order $m$. The multiplication by $x$ induces an automorphism $\psi$ of the additive group $F$, also of order $m$, and $\langle \psi \rangle$ acts transitively on $F \setminus \{0\}$. This automorphism $\psi$ is usually called Singer cycle. Consider a homocyclic group $H_{l,n} = (\mathbb{Z}/2^l\mathbb{Z})^n$. Its Frattini subgroup $\Phi$ is isomorphic to $H_{l-1,n}$. We denote by $\GL(n,\mathbb{Z}/2^l\mathbb{Z})$ the group of $n \times n$ matrices with coefficients in $\mathbb{Z}/2^l\mathbb{Z}$ and invertible determinant. The natural map $\GL(n,\mathbb{Z}/2^l\mathbb{Z}) \to \GL(n,\mathbb{Z}/2\mathbb{Z})$,
given by componentwise reduction modulo $2$, is surjective and its kernel has size $2^{n^2 (l-1)}$, so there exists $\tau \in \Aut(H_{l,n})$ inducing the Singer cycle automorphism $\psi$ on $H_{l,n}/\Phi \cong H_{1,n} = (\mathbb{Z}/2\mathbb{Z})^n$ and the order of $\tau$ is $m$ multiplied by a power of $2$. Raising $\tau$ to a suitable power of $2$ gives an automorphism of $H_{l,n}$ of order $m$, call it $\gamma$. Observe that the only fixed point of $\gamma$ in its action on $H_{l,n}/\Phi$ is the trivial element. Let $K \cong C_2^n$ be the subgroup of $H_{l,n}$ generated by the involutions, and let $\varepsilon \in K$ be an involution. Then there exists $a \in H_{l,n}$ such that $\varepsilon = a^{2^{l-1}}$. If $\gamma$ fixes $\varepsilon$, then
$$(\gamma(a) \cdot a^{-1})^{2^{l-1}} = \gamma(a^{2^{l-1}}) \cdot (a^{2^{l-1}})^{-1} = \gamma(\varepsilon) \cdot \varepsilon^{-1} = 1,$$
therefore $\gamma(a) a^{-1} \in \Phi$, in other words $\gamma(a) \Phi = a \Phi$. This is a contradiction, since we know that $\gamma$ acts fixed point freely on $H_{l,n}/\Phi$. Therefore the action of $\gamma$ on $K$ is nontrivial, and since $m$ is a prime number, this implies that $\langle \gamma \rangle$ acts transitively on $K \setminus \{1\}$. So $H_{l,n} \rtimes \langle \gamma \rangle$ is $m$-pyramidal of order $2^{nl} \cdot m$. If $d$ is any odd positive integer, the direct product $C_d \times (H_{l,n} \rtimes \langle \gamma \rangle)$ is $m$-pyramidal of order $2^{nl} \cdot m \cdot d$. This proves that $Y_m \subseteq X_m$.

We are left to prove that $X_m \subseteq Y_m \cup Z_m$. Let $G$ be an $m$-pyramidal group. Since $m$ is the index of the centralizer of an involution, we can write $|G| = 2^a \cdot m \cdot d$ with $d$ odd. We will prove that $|G| \in Y_m \cup Z_m$ by induction on $|G|$ (for fixed $m$). Let $N$ be a minimal normal subgroup of $G$. If $|N|$ is even, then $N$ is an elementary abelian $2$-group $N \cong C_2^n$ and $m=2^n-1$. Lemma \ref{ndividesa} implies that $n$ divides $a$. Now assume $|N|$ is odd. We know that $G/N$ is $1$-pyramidal or $m$-pyramidal (because $m$ is prime) and if it is $m$-pyramidal we conclude by induction. So now assume that $G/N$ is $1$-pyramidal. Since $|N|$ is odd, the Sylow $2$-subgroups of $G$ are either cyclic or generalized quaternion, so $xy \neq yx$ for every two involutions $x,y$ of $G$. If $\varepsilon$ is an involution of $G$, then $\varepsilon N$ is the unique involution of $G/N$ hence the subgroup $N \langle \varepsilon \rangle$ of $G$ is normal in $G$ and it contains involutions, hence the subgroup of $G$ generated by the involutions is $K = N \langle \varepsilon \rangle \cong D_{2m}$ by Proposition \ref{dihedral}, therefore $N \cong C_m$. This also implies that $|C_G(N)|$ is odd. Since $G/C_G(N)$ is isomorphic to a subgroup of $\Aut(N) \cong C_{m-1}$, it follows that $a \leqslant t$.
\end{proof}

\section{Acknowledgements}
The first author acknowledges the support of the CAPES PhD
fellowship and the NSF of China - Grant number 12161035. 
The second author acknowledges the support of Conselho Nacional 
de Desenvolvimento Cient\'ifico e Tecnol\'ogico (CNPq), Universal
- Grant number 402934/2021-0.

\bibliographystyle{unsrt}
\small\bibliography{reference_R}

\end{document}